\title{Superposition of COGARCH processes}
\author{Anita Behme\thanks{Center for Mathematical Sciences, Technische Universit\"at M\"unchen,  85748 Garching, Boltzmannstrasse 3, Germany, e-mail: behme@ma.tum.de, carsten.chong@tum.de and cklu@ma.tum.de},
Carsten Chong$^\ast$\hspace{-5pt}, and
Claudia Kl\"uppelberg$^\ast$
}
\numberwithin{equation}{section}
\newtheorem{theorem}{Theorem}[section]
\newtheorem{lemma}[theorem]{Lemma}
\newtheorem{remark}[theorem]{Remark}
\newtheorem{example}[theorem]{Example}
\newtheorem{proposition}[theorem]{Proposition}
\newtheorem{definition}[theorem]{Definition}
\newtheorem{corollary}[theorem]{Corollary}
\newtheorem{fig}[theorem]{Figure}
\newcommand{\bthe}{\begin{theorem}}
\newcommand{\ethe}{\end{theorem}}
\newcommand{\ben}{\begin{enumerate}}
\newcommand{\een}{\end{enumerate}}
\newcommand{\bit}{\begin{itemize}}
\newcommand{\eit}{\end{itemize}}
\newcommand{\beq}{\begin{equation}}
\newcommand{\eeq}{\end{equation}}
\newcommand{\ble}{\begin{lemma}}
\newcommand{\ele}{\end{lemma}}
\newcommand{\bde}{\begin{definition}\rm}
\newcommand{\ede}{\halmos\end{definition}}
\newcommand{\bco}{\begin{corollary}}
\newcommand{\eco}{\end{corollary}}
\newcommand{\bpr}{\begin{proposition}}
\newcommand{\epr}{\end{proposition}}
\newcommand{\brem}{\begin{remark}\rm}
\newcommand{\erem}{\end{remark}}
\newcommand{\bproof}{\begin{proof}}
\newcommand{\eproof}{\end{proof}}
\newcommand{\bexam}{\begin{example}\rm}
\newcommand{\eexam}{\end{example}}
\newcommand{\bfi}{\begin{fig}}
\newcommand{\efi}{\end{fig}}
\newcommand{\btab}{\begin{tab}}
\newcommand{\etab}{\end{tab}}
\newcommand{\beao}{\begin{eqnarray*}}
\newcommand{\eeao}{\end{eqnarray*}\noindent}
\newcommand{\beam}{\begin{eqnarray}}
\newcommand{\eeam}{\end{eqnarray}\noindent}
\newcommand{\barr}{\begin{array}}
\newcommand{\earr}{\end{array}}
\newcommand{\bdis}{\begin{displaymath}}
\newcommand{\edis}{\end{displaymath}\noindent}
\def\N{{\mathbb N}}
\def\E{{\mathbb E}}
\def\RR{{\mathbb R}}
\newcommand{\NN}{\mathbb{N}}
\newcommand{\FF}{\mathbb{F}}
\newcommand{\GG}{\mathbb{G}}
\newcommand{\PP}{\mathbb{P}}
\def\calb{{\mathcal{B}}}
\def\calf{{\mathcal{F}}}
\def\calg{{\mathcal{G}}}
\def\calp{{\mathcal{P}}}
\def\calo{{\mathcal{O}}}
\def\cF{{\mathcal{F}}}
\def\cG{{\mathcal{G}}}
\newcommand{\bb}{\mathrm{b}}
\newcommand{\ee}{\mathrm{e}}
\newcommand{\ii}{\mathrm{i}}
\newcommand{\dd}{\mathrm{d}}
\newcommand{\tvp}{{\tilde\vp}}
\newcommand{\bbn}{\mathbb{N}}
\newcommand{\bbr}{\mathbb{R}}
\newcommand{\bbe}{\mathbb{E}}
\newcommand{\bbf}{\mathbb{F}}
\newcommand{\bbg}{\mathbb{G}}
\def\bone{{\mathds 1}}
\newcommand{\eqd}{\stackrel{\mathrm{d}}{=}}
\newcommand{\la}{{\lambda}}
\newcommand{\La}{{\Lambda}}
\newcommand{\ga}{{\gamma}}
\newcommand{\si}{{\sigma}}
\newcommand{\vp}{\varphi}
\newcommand{\eps}{\varepsilon}
\newcommand{\om}{\omega}
\newcommand{\Om}{\Omega}
\newcommand{\var}{{\rm Var}}
\newcommand{\cov}{{\rm Cov}}
\newcommand{\wt}{\widetilde}
\newcommand{\halmos}{\quad\hfill\mbox{$\Box$}}  
\begin{document}


\maketitle

\begin{abstract}
We suggest three superpositions of COGARCH (supCOGARCH) volatility processes driven by L\'evy processes or L\'evy bases. 
We investigate second-order properties, jump behaviour, and prove that they exhibit Pareto-like tails. Corresponding price processes are defined and studied. 
We find that the supCOGARCH models allow for more flexible autocovariance structures than the COGARCH. 
Moreover, in contrast to most financial volatility models, the supCOGARCH processes do not exhibit a deterministic relationship between price and volatility jumps. 
Furthermore, in one supCOGARCH model not all volatility jumps entail a price jump, while in another supCOGARCH model not all price jumps necessarily lead to volatility jumps. 
\end{abstract}

\noindent
\begin{tabbing}
{\em AMS 2010 Subject Classifications:} \= primary:\,\,\,60G10\\
\> secondary: \,\,\,60G51, 60G57, 60H05
\end{tabbing}

\vspace{0.4cm}

\noindent
{\em Keywords:}
COGARCH, continuous-time GARCH model, independently scattered, infinite divisibility, L\'evy basis,  L\'evy process, random measure, stationarity, stochastic volatility process, supCOGARCH, superposition


\section{Introduction}\label{s1}

GARCH models have been used throughout the last decades to model returns sampled at regular intervals on stocks, currencies and other assets. They capture many of the stylized features of such data; e.g. heavy tails, volatility clustering and dependence without correlation. Also because of their interesting probabilistic properties as solutions to stochastic recurrence equations, they have attracted research by probabilists and statisticians; e.g. \cite{FZ}. Various attempts have been made to capture the stylized features of financial time series using continuous-time models. The interest in continuous-time models originates in the current wide-spread availability of irregularly spaced and high-frequency data.
There was a long debate, whether price and volatility fluctuations are caused by jumps or not. 
This question was answered convincingly in previous years by Jacod and collaborators, who developed sophisticated statistical tools to extract jumps of price and volatility out of high-frequency data (cf. \cite{Jacod10, JKMc,AJ} and references therein).

A prominent continuous-time model is the stochastic volatility model of Barndorff-Nielsen and Shephard \cite{Barn:Shep:2001}, in which the volatility process $V$ and the martingale part of the log asset price $G$ satisfy the equations 
\beam\label{eq1.1}
\dd V_t &=& -\la V_t\,\dd t + \dd L_{\la t},\\
\dd G_t &=&  \sqrt{V_t} \,\dd W_t + \rho \,\dd\wt L_{\la t},  \nonumber
\eeam
where $\la>0$, $\rho\le 0$, $L=(L_t)_{t\ge0}$ is a non-decreasing L\'evy process with compensated version $\wt L$ and $W=(W_t)_{t\ge0}$ is a standard Brownian motion independent of $L$. The volatility process $V$ is taken to be the stationary solution of \eqref{eq1.1}, in other words, a stationary L\'evy-driven Ornstein-Uhlenbeck (OU) process.
In this model, price jumps are modelled by (scaled) upwards jumps in the volatility.

It was noticed early on that the exponential autocovariance function of the OU process may be too restrictive.
Two suggestions have been made to allow for more flexibility in the autocovariance function: Barndorff-Nielsen 
\cite{BNsuper} suggested to replace $V$ by a superposition of such processes (called supOU process), which yields more flexible monotone autocovariance functions. It is defined as
\begin{equation}\label{supOU}
V_t=\int_{(-\infty,t]} \int_{(0,\infty)} \ee^{-\la (t-s)}\, \Lambda(\dd s,\dd\la),\quad t\in\bbr,
\end{equation}
where $\Lambda$ is an independently scattered infinitely divisible random measure, also called L\'evy basis. Superpositions of CARMA processes can be defined analogously; cf. \cite{BN-S:2011,Chong13}. As shown in e.g. \cite[Prop.~2.6]{FK}, supOU models can also model long range dependence for specific superposition measures.

On the other hand, \cite{brockwell5,todorov:tauchen:2006} suggested higher order L\'evy-driven CARMA models, which also allow for non-monotone autocovariance functions. 
The drawback of both model classes is their linearity and its consequences towards the stylized features of financial data. 
For instance, linear models inherit their distributions from that of the L\'evy increments in a linear way.
As a consequence, only when the driving L\'evy process has heavy-tailed  (regularly varying) increments, they model high level volatility clusters; cf. \cite[Prop.~5]{Fasen}.
Moreover, in contrast to empirical findings (cf.~\cite{JKMc}), these models allow only for negative price jumps coupled to the jumps in the volatility.

A continuous-time GARCH (COGARCH) model has been introduced in \cite{KLM:2004} with volatility process $V$ and martingale part of the log asset price given by 
\beam\label{eq1.2}
\dd V_t &=& (\beta -\eta V_t)\, \dd t + V_{t-}\vp\, \dd[L,L]_t,\\
\dd G_t &=& \sqrt{V_{t-}} \,\dd L_t,\nonumber
\eeam
where $\beta, \eta, \vp>0$ and $L$ is an arbitrary mean zero L\'evy process. 
The volatility process $V$ is taken to be the stationary solution of \eqref{eq1.2}.
This model satisfies all stylized features of financial prices, exactly as the GARCH model for low frequency data. 
The drawback of an exponentially decreasing covariance function has been taken care of by higher order models; cf.~\cite{BCL:2006}, like generalizing from OU to CARMA.

All models mentioned above have price jumps exactly at the times when the volatility jumps, since their prices are driven by the same L\'evy process.
Moreover, with the exception of the supOU/supCARMA process, all jump sizes in volatility and price exhibit a fixed deterministic relationship; cf.~\cite{JKMc}. As this is not very realistic, multi-factor models are needed. In this paper we want to contruct such a multi-factor model, based on the COGARCH.

In contrast to the OU or CARMA models, the COGARCH model is defined as a stochastic integral with stochastic integrand.
But also in this framework there is a canonical way to construct a superposition.

Starting by the fact that the ratio of volatility jumps and squared price jumps is always equal to $\vp$ in the COGARCH model, we randomize this scale parameter $\vp$.
There are various ways how to do this in a meaningful way, and we present three different possibilities, all leading to multi-factor COGARCH models.
Our three models have different qualitative behaviour. For instance, the first supCOGARCH allows for jumps in the volatility, which do not necessarily lead to jumps in the price process.
On the other hand, for certain choices of the distribution of the random parameter $\vp$, the third supCOGARCH model allows for jumps in the price without having a jump in the volatility. More properties will be reported.

An interesting feature is that some of the presented new supCOGARCH volatility processes can be written in terms of a so-called {\sl ambit process}, which has been introduced in \cite{BN-S:2004} in the context of turbulence modelling.
In our context the ambit process has a stochastic integrand, which is not independent of the integrator.
This implies that we are no longer in the framework of \cite{Rajput:1989}. Moreover, since COGARCH models are heavy-tailed, having possibly not even a second finite moment, the theory presented in~\cite{Walsh} is also not applicable. 
Instead we need the concept presented in \cite{Chong13}, which allows to integrate stochastic processes with respect to a L\'evy basis in the generality needed for our supCOGARCH models.

Our paper is organized as follows. In Section \ref{s2}, we recall the COGARCH model and give a short summary of L\'evy bases. In Section \ref{s3}, we present three different superpositions of COGARCH volatility processes. For each of the three models we give necessary and sufficient conditions for strict stationarity and derive the second order structure of the stationary process. 
The superpositions allow for more flexible autocorrelation structures than the COGARCH model
(Propositions~\ref{propapproach1moments}, \ref{propapproach2moments} and \ref{secord-supcog3}).
However, the stationary distributions of the supCOGARCH processes preserve the Pareto-like tails of the COGARCH process (Propositions~\ref{tail-supCOG1}, \ref{prop-tails-sup2} and \ref{tail-supCOG3}).
Section \ref{s5} is devoted to the corresponding price processes and the second-order properties of their stationary increments. Again, main characteristics of the COGARCH are preserved like the uncorrelated increments but positively correlated squared increments (Theorems \ref{price-supCOG1}, \ref{prop-price2} and \ref{prop-price3}). Nevertheless, each of the supCOGARCH models has its specific characteristics as highlighted in Section \ref{s4}. 
Furthermore, for all three models there is no longer a deterministic relationship between the jump sizes in volatility and price. Although in this paper we concentrate on the probabilistic properties of our new models, statistical issues are shortly addressed here. 
Finally, Section \ref{s6} contains the proofs of our results.

\section{Notation and Preliminaries}\label{s2}

By the L\'evy-Khintchine formula (e.g. \cite[Thm. 8.1]{sato}) the {\sl characteristic exponent} of a real-valued L\'evy process
$X=(X_t)_{t\geq 0}$ is given by
\[
 \psi_X(u) := \log \bbe\left[\ee^{\ii u X_1 } \right]
 = \ii \gamma_X u- \frac{1}{2}  \sigma_X^2 u^2  + \int_{\RR} (\ee^{\ii u y} -1 -\ii u y  \mathds{1}_{\{|y|\leq 1\}}) \,\nu_X(\dd y),\quad u\in\bbr,
\]
where $(\gamma_X, \sigma_X^2, \nu_X)$ is the {\sl characteristic triplet} of $X$ with {\sl L\'evy measure} $\nu_X$ satisfying  $\nu_X(\{0\})=0$ and $\int_\bbr 1\wedge|y|^2\,\nu_X(\dd y)<\infty$.
If additionally $\int_{|y|\leq 1} |y|\,\nu_X(\dd y)<\infty$, we may also write the characteristic exponent in the form
\[ \psi_X(u)=\ii\gamma^0_X u - \frac{1}{2} \sigma_X^2 u^2  + \int_{\RR} (\ee^{\ii u y}  -1 )\, \nu_X(\dd y),\quad u\in\bbr,\]
and call $\gamma_X^0$ the {\sl drift} of $X$.
This is in particular the case for subordinators, i.e. L\'evy processes with increasing sample paths.
We also recall that the {\sl quadratic variation process} of the L\'evy process $X$ is given by
\[ [X,X]_t := \si_X^2 t + [X,X]^\dd_t := \si_X^2 t + \sum_{0<s\leq t} (\Delta X_s)^2,\quad t\geq0,\]
where $[X,X]^\dd$ is called the {\sl pure-jump part} of $[X,X]$.

Every L\'evy process $(X_t)_{t\geq 0}$ can be extended to a {\sl two-sided L\'evy process}
$(X_t)_{t\in\RR}$ by setting $X_{t}=-X'_{-t-}$, $t<0$, for some i.i.d.
copy $X'$ of $X$. We say that $(X_t)_{t\in\RR}$ has characteristic triplet $(\gamma_X, \sigma_X^2, \nu_X)$ if $(X_t)_{t\geq 0}$
has characteristic triplet $(\gamma_X, \sigma_X^2, \nu_X)$.


Throughout we use the notation $\RR_+=(0,\infty)$, $\bbr_-=(-\infty,0)$ and $\NN_0= \NN \cup \{0\}$.

\subsection{The COGARCH model}\label{s21}

Let $(L_t)_{t\geq 0}$ be a L\'evy process with characteristic triplet $(\ga_L,\si_L^2,\nu_L)$ and define
\begin{equation} \label{eq-def-S}
 S_t:=[L,L]_t^{\dd}=\sum_{0<s\leq t} (\Delta L_s)^2, \quad t\geq 0.
\end{equation}
Then $(S_t)_{t\ge0}$ is a subordinator without drift and its L\'evy measure $\nu_S$ is the image measure of $\nu_L$ under the transformation $y\mapsto y^2$.
For $\eta>0$ and $\vp\geq0$ define another L\'evy process by
\begin{equation}\label{eq-defX}
X^\vp_t = \eta t - \sum_{0<s\le t} \log (1+\vp \Delta S_s),\quad t\ge 0,
\end{equation}
which is completely determined by $S$ (and hence by $L$). Then $X^\vp$ has characteristic triplet $(\eta, 0,\nu_{X^\vp})$, where
 $\nu_{X^\vp}$ is the image measure of $\nu_S$ under the mapping $y\mapsto-\log(1+\vp y)$, and is therefore a spectrally negative L\'evy process, i.e. it only has negative jumps. For $t\geq0$ we have
\beq\label{eq-defpsi} \E [\ee^{-u X^\vp_t}] = \ee^{t \Psi(u,\vp)} \quad\text{with}\quad \Psi(u,\vp)= -\eta u + \int_{\bbr_+} ((1+\vp y)^{u} -1) \,\nu_S(\dd y),\eeq
where, whenever  $\vp>0$, we have $\E [\ee^{-u X^\vp_t}]<\infty$ for $u>0$ for some $t>0$ or,
equivalently, for all $t>0$, if and only if
$\E[S_1^{u}]<\infty$ \cite[Lemma 4.1]{KLM:2004}. In particular, if $\E[S_1]<\infty$ or $\E[S_1^{2}]<\infty$, respectively, we have from \cite[Ex. 25.12]{sato}
\begin{equation} \label{eq-psi-explicit}
 \Psi(1,\vp)=\vp \E[S_1]-\eta \quad \mbox{and} \quad \Psi(2,\vp)= 2\vp \E[S_1]+\vp^2\var[S_1] -2\eta.
\end{equation}

Recall from \cite{KLM:2004} that the {\sl COGARCH (volatility) process} driven by the L\'evy process $L$ (or the subordinator $S$) with parameter $\vp$ is given by
\begin{equation}\label{cog}
V^\vp_t = \ee^{-X^\vp_t}\left(V^\vp_0 + \beta\int_{(0,t]} \ee^{X_{s}^\vp} \,\dd s \right),\quad t\ge0,
\end{equation}
where $\beta>0$ is a constant and $V_0^\vp$ is a nonnegative random variable, independent of $(S_t)_{t\ge0}$. 

Moreover, the COGARCH volatility process $V^\vp$ is a special case of a generalized Ornstein-Uhlenbeck process (cf. \cite{behme_diss,lindner:maller:2005}) and is the solution of the SDE
\begin{equation}\label{cog-sde}
\dd V^\vp_t = (\beta - \eta V^\vp_{t}) \,\dd t + V^\vp_{t-} \vp \,\dd S_t
= V^\vp_{t-}(\vp \dd S_t - \eta \,\dd t)+\beta\, \dd t,\quad t\ge0.
\end{equation}
It admits the integral representation
\begin{equation}\label{phisol}
V^\vp_t = V^\vp_0 + \beta t - \eta \int_{(0,t]} V^\vp_s \,\dd s +  \sum_{0<s\le t} V_{s-}^\vp \vp \Delta S_s,\quad t\ge0.
\end{equation}
The corresponding {\sl price process} or {\sl integrated COGARCH process} is then defined as
\begin{equation} \label{eq-pricecog} G_t=\int_0^t \sqrt{V_{s-}^\vp}\,\dd L_s, \quad t\geq 0.\end{equation}

\subsection{Stationary COGARCH processes}\label{s22}

By \cite[Thm. 3.1]{KLM:2004}, the process defined in \eqref{cog} or equivalently in \eqref{phisol} has a strictly stationary distribution if and only if
\beq\label{cogstatcond} \int_{\bbr_+} \log(1+\vp y)\,\nu_S(\dd y) =\int_{\bbr} \log(1+\vp y^2)\,\nu_L(\dd y) < \eta. \eeq
In this case, the stationary distribution of the COGARCH process is given by the distribution of $V_\infty^\vp:=\beta \int_{\bbr_+} \ee^{-X^\vp_{s}} \,\dd s$. Note that for $\vp=0$, the stationary COGARCH reduces to $V^0_t = \beta/\eta$ for all $t\geq0$.

In the sequel we denote by the set $\Phi_L$ all $\vp\geq0$ where \eqref{cogstatcond} is satisfied.
By monotone convergence, the left-hand side of \eqref{cogstatcond} is continuous in $\vp$ and converges to $+\infty$ as $\vp\to\infty$,
which means that $\vp_{\mathrm{max}}:=\sup \Phi_L$ is finite and hence $\Phi_L=[0,\vp_{\mathrm{max}})$.

Let us recall the moment structure of $V^\vp$ in the stationary case.
It follows by direct computation from \cite[Thm. 3.1]{behme} that, if $\kappa>0$ is a constant, then
\begin{equation} \label{momentcondcogarch}
 \bbe [S_1^{\max \{ \kappa , 1 \} }]<\infty \quad \mbox{and} \quad
\log\bbe  \left[ \ee^{-\kappa X_1^\vp}\right] = \Psi(\kappa, \vp)<0
\end{equation}
imply $\bbe[(V^\vp_0)^\kappa]<\infty$.
If \eqref{momentcondcogarch} holds for $\kappa=1$ or $\kappa=2$, respectively,
for every $t\geq 0$, $h\geq 0$ the first two moments of the stationary process $V^\vp$ are given by (\cite[Cor. 4.1]{KLM:2004})
\begin{align}
\bbe[V^\vp_t]&=-\frac{\beta}{\Psi(1,\vp)}=\frac{\beta}{\eta - \vp \bbe[S_1]},\label{mean-cog}\\
\bbe[(V^\vp_t)^2]&= \beta^2 \frac{2}{\Psi(1,\vp)\Psi(2,\vp)}\quad \mbox{and} \label{squaremean-cog} \\
\cov[V^\vp_t,V^\vp_{t+h}]
&=\ee^{h\,\Psi(1,\vp)} \var[V_0^\vp] =\ee^{h\,\Psi(1,\vp)}\beta^2\left(\frac{2}{\Psi(1,\vp)\Psi(2,\vp)}-\frac{1}{\Psi(1, \vp)^2}\right)\label{acf-cog}\\
&=\ee^{h\,(\vp \E[S_1]-\eta)}\frac{\beta^2\vp^2\var[S_1]}{(\vp \E[S_1]-\eta)^2(2\eta-2\vp \E[S_1]-\vp^2 \var[S_1])}.\nonumber
\end{align}

From \eqref{momentcondcogarch} we have the clear picture that, although a stationary $V^\vp$ exists for all $\vp\in\Phi_L=[0,\vp_{\max})$,
moments only exist on some subinterval, which shrinks with the increasing order of the moment. Moreover, it is known that no COGARCH process has
moments of all orders \cite[Prop. 4.3]{KLM:2004}. For later reference we set
\begin{equation}\label{eq-def-Phimoment}
 \Phi_L^{(\kappa)}:=[0,\vp_{\max}^{(\kappa)})\quad \mbox{with} \quad \vp_{\max}^{(\kappa)}=\sup\{\vp: \E[(V_0^\vp)^\kappa]<\infty \}.
\end{equation}
We have $0<\vp_{\max}^{(\kappa_2)}\le \vp_{\max}^{(\kappa_1)}< \vp_{\max}<\infty$ whenever $0<\kappa_1\leq\kappa_2<\infty$, i.e. $\Phi_L^{(\kappa_2)}\subset \Phi_L^{(\kappa_1)}\subset \Phi_L$.

In \cite{klm:2006} the tail behaviour of the COGARCH process is studied. In particular, it is shown that under rather weak assumptions
the distribution of $V_0^\vp$ has Pareto-like tails \cite[Thm. 6]{klm:2006}.

Regarding the price process $G^\vp$ in the stationary case, it is known from \cite[Prop. 5.1]{KLM:2004} that $G^\vp$ has stationary increments that are uncorrelated on disjoint intervals while the squared increments are, under some technical assumptions, positively correlated, an effect which is typical for financial time series.

For later reference we extend the stationary COGARCH volatility process \eqref{cog} to a two-sided
process in the following way.
For a two-sided L\'evy process $(L_t)_{t\in\bbr}$ we obtain a two-sided subordinator $(S_t)_{t\in \RR}$ by setting
\begin{equation}\label{eq-def-S2sided}
 S_t:=\sum_{0<s\leq t} (\Delta L_s)^2,\quad t\geq 0\quad \mbox{and}\quad S_t:= - \sum_{t< s \leq 0} (\Delta L_s)^2, \quad t\leq 0.
\end{equation}
Now we automatically obtain for every $\vp$ another two-sided L\'evy process $(X^\vp_t)_{t\in\RR}$
given by
\begin{equation}\label{eq-defX2sided}
 X^\vp_t = \eta t - \sum_{0<s\le t} \log (1+\vp \Delta S_s),
\quad t\ge 0,\quad X^\vp_t=\eta t + \sum_{t<s\le 0} \log (1+\vp \Delta S_s),\quad t<0.
\end{equation}
The two-sided COGARCH process $(V^\vp_t)_{t\in\RR}$ is then given by
\beq\label{cog-2sided} V^\vp_t:=\beta\int_{(-\infty,t]} \ee^{-(X^\vp_t-X^\vp_{s})}\,\dd s, \quad t\in\bbr, \eeq
and it is well-defined for every $\vp\in\Phi_L$.
Obviously, the restriction of this process to $t\geq0$ equals the process given in \eqref{cog}
with starting random variable $V^\vp_0:=\beta\int_{(-\infty,0]} \ee^{X_s^\vp}\,\dd s$. Hence the two-sided COGARCH is always stationary  with the same finite-dimensional distributions as the one-sided stationary COGARCH.

\subsection{L\'evy bases}\label{s23}

Let $(\Om,\calf,\bbf=(\calf_t)_{t\in\bbr},\PP)$ be a filtered probability space satisfying
the usual assumptions of completeness and right-continuity. Denote the space of all $\PP$-a.s. finite random variables by $L^0$, the optional (resp. predictable) $\si$-field by $\calo$ (resp. $\calp$) and set $\tilde\calp:=\calp\otimes\calb(\bbr^d)$, where $\calb(\bbr^d)$ is the Borel-$\si$-field on $\bbr^d$. Now let $(E_k)_{k\in\bbn}$ be a sequence of measurable subsets increasing to $\bbr^d$ and define
$\tilde\calp_\bb$ as the collection of all $\tilde\calp$-measurable subsets of $\Om\times(-k,k]\times E_k$ for $k\in\bbn$. Similarly, set $\calb_\bb:=\bigcup_{k=1}^\infty \calb((-k,k]\times E_k)$.

In this set-up, we use the term L\'evy basis as follows: 

\bde\label{def:Levybasis}
A {\sl L\'evy basis} on $\bbr\times\bbr^d$ is a mapping $\La\colon \tilde\calp_\bb \to L^0$ satisfying:
\ben
    \item $\La(\emptyset)=0$ a.s.
    \item If $(A_n)_{n\in\bbn}$ are pairwise disjoint sets in $\tilde\calp_\bb$ whose union again lies in $\tilde\calp_\bb$, then
    \[ \La\Big(\bigcup_{n=1}^\infty A_n\Big) = \sum_{n=1}^\infty \La(A_n)\quad \mbox{a.s.} \]
    \item If $(B_n)_{n\in\bbn}$ are pairwise disjoint sets in $\calb_\bb$, then $(\La(\Om\times B_n))_{n\in\bbn}$ is a sequence of independent random variables with each of them having an infinitely divisible distribution.
    \item If $A\in\tilde\calp_\bb$ is a subset of $\Om\times (-\infty,t]\times\bbr^d$ for some $t\in\bbr$, then $\La(A)$ is $\calf_t$-measurable.
    \item If $A\in\tilde\calp_\bb$, $t\in\bbr$ and $F\in\calf_t$, then $\La\big(A\cap(F\times(t,\infty)\times \bbr^d)\big) = \bone_F \La\big(A\cap(\Om\times(t,\infty)\times \bbr^d)\big)$.
    \item For all $t\in\bbr$ and measurable $U\subset E_k$ for some $k\in \NN$, we have $\La(\Om\times\{t\}\times U)=0$ a.s. 
\een
In the following, we often write $\La(B)=\La(\Om\times B)$ for a set $B\in\calb_\bb$.
\ede

A natural choice for $\bbf$ is certainly the {\sl augmented natural filtration} $\bbg=(\calg_t)_{t\in\bbr}$ of the
L\'evy basis $\La$, which means that for $t\in\bbr$, $\calg_t$ is the completion of the $\si$-field generated by the collection of all $\La(B)$ with $B\in\calb_\bb, B\subseteq (-\infty,t]\times\bbr^d$.

The first three points of Definition \ref{def:Levybasis} are similar to the notion of infinitely divisible independently scattered random measures in \cite{Rajput:1989}. 
Further we have added condition (f) because this ensures that $\La$ induces a jump measure $\mu^\La$ by
\beq\label{jm} \mu^\La(\om,\dd t, \dd x, \dd y) := \sum_{s\in\bbr} \sum_{\xi\in\bbr^d} \bone_{\{\La(\{s\}\times\{\xi\})(\om)\neq0\}}
\delta_{(s,\xi,\La(\{s\}\times\{\xi\})(\om))}(\dd t,\dd x,\dd y),\quad\om\in\Om, \eeq
where $\delta$ stands for the Dirac measure. We will follow the usual convention of suppressing $\om$ in the sequel. 
Thanks to (d) and (e), $\mu^\La$ is an optional $\tilde\calp$-$\si$-finite random measure in the sense of \cite[Theorem II.1.8]{JS2}. Therefore, the predictable compensator $\Pi$ of $\mu^\La$ is well-defined.  

In this paper, we will only consider L\'evy bases $\La$, which are of the form
\beq\label{subordbasis} \La(\dd s,\dd x)=\int_\bbr y\,\mu^\La(\dd s,\dd x,\dd y). \eeq
In addition, the predictable compensator of $\mu^\La$ in the augmented natural filtration $\bbg$ will always be given by $\Pi(\dd s,\dd x,\dd y)=\dd s\,\pi(\dd x)\,\nu(\dd s)$, where $\pi$ is some probability measure on $\bbr^d$ and $\nu$ the L\'evy measure of a subordinator. In this particular case, if we write
\[ W(s,x,y)\ast\mu^\La_t := W\ast\mu^\La_t :=
\begin{cases}
\displaystyle\int_{(0,t]\times\bbr^d\times\bbr} W(s,x,y)\,\mu^\La(\dd s,\dd x,\dd y), & \text{if } t\geq0,\\
 \displaystyle\int_{(t,0]\times\bbr^d\times\bbr} W(s,x,y)\,\mu^\La(\dd s,\dd x,\dd y), & \text{if } t<0,
\end{cases}  \]
for some $\calo\otimes\calb(\bbr^d)\otimes\calb(\bbr)$-measurable function $W$ which is integrable w.r.t. $\mu^\La$ ($\om$-wise as a Lebesgue integral), then we have
\beq\label{expect} \bbe[W\ast\mu^\La_t] = \bbe[W\ast\Pi_t] = \int_{(0,t]\times\bbr^d\times\bbr} \bbe[W(s,x,y)]\,\Pi(\dd s,\dd x,\dd y),\quad t\geq0,  \eeq
for all integrable functions $W$ (and similarly for $t<0$), see \cite[Theorem II.1.8]{JS2}. Moreover, when taking stochastic integrals with respect to $\La$, these can be expressed in terms of $\mu^\La$:
\[ \int_{(0,t]\times \bbr^d} H(s,x)\,\La(\dd s,\dd x) = \int_{(0,t]\times\bbr^d\times\bbr} H(s,x)y\,\mu^\La(\dd s,\dd x,\dd y),\quad t\geq0,\]
for all $H$ which are integrable w.r.t. $\La$ on $(0,t]$ (similarly for $t<0$); see \cite{Chong13} for integrability conditions and further details on L\'evy bases.

For later reference, we also introduce the pure-jump part of the quadratic variation measure of $\La$ defined as
\beq\label{pjqvm} [\La, \La]^\dd(A):=\int_{\bbr\times\bbr^d\times\bbr} \bone_A(t,x)y^2\,\mu^\La(\dd t,\dd x,\dd y),\quad A\in\tilde\calp_\bb. \eeq

\section{Superposition of COGARCH (supCOGARCH) processes}\label{s3}

In the following three subsections we propose different approaches to construct a superposition of COGARCH processes. As seen in Eq.~\eqref{cog-sde}, the parameters $\beta$ and $\eta$ only influence the continuous part of the COGARCH process, whereas $\vp$ scales its jump sizes. Since our goal is to find a model which shares the  basic features of the COGARCH model but has a more flexible jump structure, we let $\beta$ and $\eta$ be fixed in the following three approaches and only allow the parameter $\vp$ to vary.

\subsection{The supCOGARCH 1 volatility process}\label{s31}

The obvious idea of defining a supCOGARCH process as a weighted integral of independent COGARCH processes with different parameters $\vp$ yields to
consider
\beq\label{v1-idea} \bar{V}^{(1)}_t:=\int_{[0,\infty)} V_t^\vp \,\pi(\dd \vp),\quad t\geq0, \eeq
for some probability measure $\pi$ on $[0,\infty)$, where each COGARCH process $V^\vp$
is driven by $S^{\vp}=[L^\vp,L^\vp]^\dd$ and $(L^\vp)_{\vp\in[0,\infty)}$ are i.i.d. copies of a canonical L\'evy process $L$ which, together with $S=[L,L]^\dd$, we only use for notational convenience. As a consequence, $(V^\vp)_{\vp\in[0,\infty)}$ is a family of independent COGARCH processes such that the integral in \eqref{v1-idea} is only well-defined if $\pi$ has countable support. This leads to the {\sl supCOGARCH 1 volatility process}
\begin{equation}\label{v1}
\bar{V}^{(1)}_t=\int_{[0,\infty)} V_t^\vp \,\pi(\dd\vp) = \sum_{i=1}^\infty p_i V_t^{\vp_i}, \quad t\geq 0,
\end{equation}
where $\pi=\sum_{i=1}^\infty p_i \delta_{\vp_i}$ for nonnegative weights $(p_i)_{i\in\bbn}$ with $\sum_{i=1}^\infty p_i = 1$.

To avoid degenerate cases we will assume throughout that
\beq\label{conv-bed-1} \bar{V}^{(1)}_0 = \sum_{i=1}^\infty p_i V^{\vp_i}_0 < \infty \quad\text{a.s.} \eeq
Note that this does not automatically imply finiteness of the supCOGARCH process at all times unless we are in the stationary case (see below).

\brem The supCOGARCH 1 process can also be written in terms of a L\'evy basis. First, define a L\'evy basis on $\bbr_+\times{[0,\infty)}$ by
\[ \La^L((0,t]\times \{\vp_i\}):=\sqrt{p_i}L_t^{\vp_i},\quad t\geq0,i\in\bbn, \]
and $\La^L(\bbr\times ([0,\infty)\setminus \bigcup_{i=1}^\infty \{\vp_i\})):=0$. Now with $\La^S=[\La^L,\La^L]^\dd$ being the pure-jump quadratic variation measure of $\La^L$ (in particular, $\La^S((0,t]\times\{\vp_i\})=p_iS^{\vp_i}_t$) and inserting \eqref{phisol} in \eqref{v1}, we see that
\begin{align} \label{defapproach1}
 \bar{V}^{(1)}_t
&= \sum_{i=1}^\infty p_i V_0^{\vp_i} + \beta t - \eta \sum_{i=1}^\infty p_i \int_{(0,t]} V_{s}^{\vp_i} \,\dd s
+ \sum_{i=1}^\infty  \int_{(0,t]}  p_i \vp_i V_{s-}^{\vp_i} \,\dd S_s^{\vp_i} \\
&= \bar{V}^{(1)}_0  + \beta t - \eta \int_{(0,t]}\bar{V}^{(1)}_s \,\dd s +  \int_{(0,t]} \int_{[0,\infty)} \vp V^\vp_{s-}\,\La^S(\dd s,\dd\vp),
 \quad t\geq 0.\nonumber
\end{align}
Note that for each $i\in\bbn$, $V^{\vp_i}$ is driven by $S^{\vp_i}$.
\erem

It follows directly from \eqref{defapproach1} that the jumps of the supCOGARCH 1 process are given by
\begin{equation} \label{approach1jump}
 \Delta \bar{V}^{(1)}_t=\sum_{i=1}^\infty p_i \Delta V_t^{\vp_i} = \sum_{i=1}^\infty p_i V_{t-}^{\vp_i} \vp_i \Delta S^{\vp_i}_t = \int_{[0,\infty)} \vp V^\vp_{t-}\,\La^S(\{t\}\times \dd \vp),\quad t\geq0.
\end{equation}
Since the independent subordinators a.s. jump at different times, a.s. only one summand in \eqref{approach1jump} is nonzero at each jump time.

The following example for a probability measure $\pi$ with two-point support will be carried through the three different
supCOGARCH processes in this section to clearify their definitions.
\bexam \label{example-vergleich}
Let $\pi=p_1\delta_{\vp_1}+ p_2\delta_{\vp_2}$ with $p_1+p_2=1$ and $\vp_1,\vp_2\in\bbr_+$. Then the supCOGARCH~1 process is the weighted sum of two independent COGARCH processes. More precisely, we have $\bar{V}^{(1)}_t= p_1 V_t^{\vp_1} + p_2 V^{\vp_2}_t$ for $t\geq0$,
where $V^{\vp_1}$ and $V^{\vp_2}$ are driven by \emph{independent} copies of the canonical L\'evy process $L$. From Figure \ref{figsup1}, we clearly see that the supCOGARCH~1 process inherits both the jumps of $V^{\vp_1}$ and $V^{\vp_2}$, scaled with $p_1$ or $p_2$, respectively.
\eexam
\setlength{\unitlength}{0.8cm}
\begin{figure}[!ht]
\begin{picture}(0,5)
\put(0,0){\includegraphics{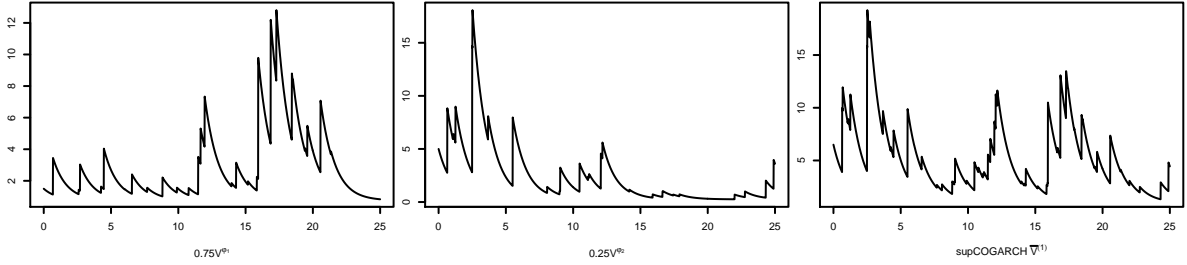}}
\end{picture}
\caption{\scriptsize Sample paths of two independent COGARCH processes with different values for $\vp$, scaled with the corresponding $p_i$, and the resulting supCOGARCH 1 process. The driving L\'evy processes are independent compound Poisson processes with rate $1$ and standard normal jumps. The parameters are: $\beta=1$, $\eta=1$, $\vp_1=0.5$, $\vp_2=0.95$ and $\pi=0.75\delta_{\vp_1}+0.25\delta{\vp_2}$, starting value was the respective mean.}\label{figsup1}
\end{figure}

Stationarity and second-order properties of the supCOGARCH 1 process are given in the following three results. Proofs are postponed to Section~\ref{s61}.

\begin{theorem}\label{thm-app1-stat}
Let $\pi=\sum_{i=1}^\infty p_i \delta_{\vp_i}$ be a probability measure on $[0,\infty)$, $\{L^{\vp_i}\colon i \in \NN \}$ a family of i.i.d. L\'evy processes, $\{S^{\vp_i}\colon i \in \NN \}$ the corresponding family of subordinators
and $\{V^{\vp_i}\colon i\in\NN \}$ the corresponding
family of COGARCH processes.
Assuming that \eqref{conv-bed-1} holds, 
a finite random variable $\bar{V_0}^{(1)}$ can be chosen
such that $\bar{V}^{(1)}$ is strictly stationary if and only if
\begin{equation} \label{approach1statiff}
\pi(\Phi_L)=1.
\end{equation}
In the case that a stationary distribution exists, it is uniquely determined by the law of
\begin{equation} \label{approach1statsol}
 \bar{V}_\infty^{(1)} := \int_{\Phi_L} V^\vp_\infty \,\pi(\dd \vp)
=  \beta \int_{\Phi_L} \int_{\bbr_+} \ee^{-X_{t}^\vp}  \,\dd t\, \pi(\dd \vp)=
\beta \sum_{i=1}^\infty p_i \int_{\bbr_+} \ee^{-X_{t}^{\vp_i}} \,\dd t.
\end{equation}
\end{theorem}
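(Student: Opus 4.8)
The plan is to treat the two implications separately, exploiting that the summands $V^{\vp_i}$ are \emph{independent} and nonnegative, and that for a single COGARCH driven by $S^{\vp_i}$ strict stationarity is completely characterised by the membership $\vp_i\in\Phi_L$ through \eqref{cogstatcond}, with stationary marginal $V^{\vp_i}_\infty$.

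For the direction ``$\Leftarrow$'', assume $\pi(\Phi_L)=1$, so that $\vp_i\in\Phi_L$ for every $i$ with $p_i>0$. First I would use the two-sided construction \eqref{cog-2sided} to realise each $V^{\vp_i}$ as a strictly stationary process, i.e. I set $V^{\vp_i}_0:=\beta\int_{(-\infty,0]}\ee^{X^{\vp_i}_s}\,\dd s\eqd V^{\vp_i}_\infty$. Since the driving L\'evy processes $L^{\vp_i}$ are independent, the $\R^\N$-valued process $t\mapsto(V^{\vp_i}_t)_{i\in\N}$ is then strictly stationary. As $\bar V^{(1)}_t=\sum_i p_iV^{\vp_i}_t$ is a fixed, time-shift-equivariant measurable functional of this family, its finite-dimensional distributions are shift-invariant, and hence $\bar V^{(1)}$ is strictly stationary \emph{provided} the series is a.s. finite. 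The latter is exactly \eqref{conv-bed-1} for the stationary configuration; here I would note that $\{\bar V^{(1)}_0<\infty\}=\{\sum_i p_iV^{\vp_i}_\infty<\infty\}$ is a tail event of the independent sequence $(V^{\vp_i}_\infty)_{i\in\N}$ and therefore, by Kolmogorov's zero--one law, has probability $0$ or $1$, so that the standing assumption \eqref{conv-bed-1} pins it down to be $1$. The explicit form \eqref{approach1statsol} is then just the definition of the stationary marginal.

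For the direction ``$\Rightarrow$'', I argue by contraposition: suppose $\pi(\Phi_L)<1$, so that there is an index $j$ with $p_j>0$ and $\vp_j\notin\Phi_L=[0,\vp_{\mathrm{max}})$. For such $\vp_j$ condition \eqref{cogstatcond} fails, and the generalized Ornstein--Uhlenbeck representation \eqref{cog} together with the convergence dichotomy for such processes \cite{lindner:maller:2005} shows that, for \emph{any} finite starting value, $V^{\vp_j}_t\stp\infty$ as $\tto$ (when $\E[X^{\vp_j}_1]<0$ the factor $\ee^{-X^{\vp_j}_t}$ already diverges, and in the boundary/recurrent case $\int_{(0,\infty)}\ee^{X^{\vp_j}_s}\,\dd s=\infty$ forces the same). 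Because all summands are nonnegative, we have the pathwise bound $\bar V^{(1)}_t\ge p_jV^{\vp_j}_t$, whence $\bar V^{(1)}_t\stp\infty$. This is incompatible with strict stationarity: a stationary $\bar V^{(1)}$ has a fixed, proper (a.s.\ finite) marginal law, so $\PP(\bar V^{(1)}_t\le K)$ cannot tend to $0$ for every $K$. Hence no finite initial value makes $\bar V^{(1)}$ stationary, which is ``$\Rightarrow$''.

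Finally, for the uniqueness claim I would observe that the joint family $(V^{\vp_i}_t)_{i\in\N}$ is a Markov process whose components are independent generalized Ornstein--Uhlenbeck processes; for $\vp_i\in\Phi_L$ each component converges in distribution to its unique stationary law $V^{\vp_i}_\infty$ from every initial condition. Passing through the finite partial sums and then letting the number of components tend to infinity, using nonnegativity to control the tail of the series, identifies the only possible stationary marginal of $\bar V^{(1)}$ as the law of $\bar V^{(1)}_\infty$ in \eqref{approach1statsol}. The main obstacle I anticipate is the ``$\Rightarrow$'' direction: making rigorous that a component with non-admissible parameter $V^{\vp_j}$ genuinely escapes to $+\infty$ \emph{in probability} (rather than merely failing to converge) requires the precise asymptotics of \cite{lindner:maller:2005}, and the boundary case $\vp_j=\vp_{\mathrm{max}}$, where $X^{\vp_j}$ is recurrent, must be handled separately; the a.s.\ finiteness of the superposed stationary variable in ``$\Leftarrow$'' is the secondary delicate point, which the standing assumption \eqref{conv-bed-1} is designed to absorb.
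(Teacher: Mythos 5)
Your proposal is correct and follows essentially the same route as the paper's proof: for sufficiency, you make each independent component stationary (the paper via choosing $V^{\vp_i}_0\eqd V^{\vp_i}_\infty$, you via the two-sided construction, which is equivalent) and pass to the countable sum, and for necessity you argue by contraposition that a component with $\vp_j\notin\Phi_L$ and $p_j>0$ diverges to $\infty$ in probability (the paper cites \cite[Thm. 3.1]{KLM:2004}, you the equivalent generalized Ornstein--Uhlenbeck dichotomy of \cite{lindner:maller:2005}, which also covers the boundary case $\vp_j=\vp_{\mathrm{max}}$) and then use nonnegativity of the remaining summands. Your extra touches --- the Kolmogorov zero--one law for finiteness of the stationary sum and the componentwise-convergence sketch of uniqueness --- are compatible refinements of the same argument rather than a different approach.
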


\begin{proposition} \label{propapproach1moments}
 Assume we are in the setting of Theorem \ref{thm-app1-stat} and let $\bar{V}^{(1)}$ be a strictly stationary solution of \eqref{defapproach1}. Recall the notation $\Phi_L^{(\kappa)}$ from Eq.~\eqref{eq-def-Phimoment}.
\begin{enumerate}
 \item
Suppose that $\pi(\Phi_L^{(1)})=1$. Then for every $t\geq0$,
 \begin{equation}\label{mean-supcog1}
   \E[\bar{V}^{(1)}_t] = \int_{\Phi_L} \E[V_0^\vp]\,\pi(\dd \vp)= \beta \sum_{i=1}^\infty \frac{p_i}{\eta-\vp_i \bbe[S_1]}.
  \end{equation}
\item
Suppose that $\pi(\Phi_L^{(2)})=1$. Then for every $t\geq 0$, $h\geq 0$ we have
 \begin{align}
   \var[\bar{V}^{(1)}_t] &= \sum_{i=1}^\infty p_i^2 \var[V_0^{\vp_i}] \quad \mbox{and} \label{var-supcog1}\\
  \cov[\bar{V}^{(1)}_t, \bar{V}^{(1)}_{t+h}] &= \sum_{i=1}^\infty p_i^2 \cov[V_0^{\vp_i},V_{h}^{\vp_i} ], \label{acf-supcog1}
 \end{align}
with $\var[V_0^{\vp_i}]$ and $\cov[V_0^{\vp_i},V_{h}^{\vp_i}]$ as given in \eqref{squaremean-cog} and \eqref{acf-cog}.
\end{enumerate}
Note that the quantities in \eqref{mean-supcog1}, \eqref{var-supcog1} and \eqref{acf-supcog1} may be infinite.
\end{proposition}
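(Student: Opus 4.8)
The plan is to reduce both parts to the one-factor formulas \eqref{mean-cog}--\eqref{acf-cog} by exploiting that the family $\{V^{\vp_i}\}_{i\in\NN}$ is independent, and to justify every interchange of summation and expectation by monotone convergence, which is available because all the summands $p_iV_t^{\vp_i}$ are nonnegative.

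For part (a), I would observe that $\bar V^{(1)}_t=\sum_{i=1}^\infty p_iV_t^{\vp_i}$ is an increasing limit of its partial sums, so the monotone convergence theorem gives $\E[\bar V^{(1)}_t]=\sum_{i=1}^\infty p_i\,\E[V_t^{\vp_i}]$ as an identity in $[0,\infty]$, without any integrability assumption. Under $\pi(\Phi_L^{(1)})=1$ each atom $\vp_i$ with $p_i>0$ lies in $\Phi_L^{(1)}$, so by \eqref{eq-def-Phimoment} the first moment $\E[V_0^{\vp_i}]$ is finite, and \eqref{mean-cog} identifies it as $\beta/(\eta-\vp_i\E[S_1])$. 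Substituting, and using $\pi=\sum_i p_i\delta_{\vp_i}$ together with stationarity (so that $\E[V_t^\vp]=\E[V_0^\vp]$), yields \eqref{mean-supcog1}; the value can still be $+\infty$ since finiteness of each summand does not force convergence of the weighted series.

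For part (b), the key structural fact is that independence kills all cross terms: for $i\neq j$ one has $\E[V_t^{\vp_i}V_{t+h}^{\vp_j}]=\E[V_t^{\vp_i}]\,\E[V_{t+h}^{\vp_j}]$. Before exploiting this I would record that under $\pi(\Phi_L^{(2)})=1$ the \emph{mean} of $\bar V^{(1)}_t$ is automatically finite: since $\Psi(2,\vp)<0$ on $\Phi_L^{(2)}$ forces $\vp_{\max}^{(2)}<\eta/\E[S_1]$ (at $\vp=\eta/\E[S_1]$ one computes $\Psi(2,\vp)=(\eta/\E[S_1])^2\var[S_1]>0$, while $\Psi(2,\cdot)$ is increasing), the first moment $\E[V_0^\vp]=\beta/(\eta-\vp\E[S_1])$ stays bounded by $\beta/(\eta-\vp_{\max}^{(2)}\E[S_1])$ for every $\vp\in\Phi_L^{(2)}$, so $\E[\bar V^{(1)}_t]\le\beta/(\eta-\vp_{\max}^{(2)}\E[S_1])<\infty$. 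This makes the centering in $\var$ and $\cov$ legitimate even when the variance series diverges.

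Next I would expand the (cross-)second moments by monotone convergence. The products $\bar V^{(1),n}_t\,\bar V^{(1),n}_{t+h}$ of the partial sums $\bar V^{(1),n}:=\sum_{i=1}^n p_iV^{\vp_i}$ increase to $\bar V^{(1)}_t\,\bar V^{(1)}_{t+h}$, so $\E[\bar V^{(1)}_t\bar V^{(1)}_{t+h}]=\sum_{i,j}p_ip_j\,\E[V_t^{\vp_i}V_{t+h}^{\vp_j}]$ in $[0,\infty]$. Splitting into $i=j$ and $i\neq j$, replacing the off-diagonal expectations by products of means via independence, and subtracting the corresponding expansion of $\E[\bar V^{(1)}_t]\,\E[\bar V^{(1)}_{t+h}]=\big(\sum_ip_i\E[V_t^{\vp_i}]\big)\big(\sum_jp_j\E[V_{t+h}^{\vp_j}]\big)$ (both factors finite by the previous step), the off-diagonal terms cancel and leave $\cov[\bar V^{(1)}_t,\bar V^{(1)}_{t+h}]=\sum_{i=1}^\infty p_i^2\,\cov[V_t^{\vp_i},V_{t+h}^{\vp_i}]$. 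Setting $h=0$ gives \eqref{var-supcog1} and the general $h$ gives \eqref{acf-supcog1}, with $\var[V_0^{\vp_i}]$ and $\cov[V_0^{\vp_i},V_h^{\vp_i}]$ taken from \eqref{squaremean-cog} and \eqref{acf-cog}; the diagonal series may diverge, in which case the variance is $+\infty$. The main obstacle is precisely controlling these infinite sums: nonnegativity supplies monotone convergence for the raw (cross-)moments, but one must separately guarantee finiteness of the mean — which is where the strict inequality $\vp_{\max}^{(2)}<\eta/\E[S_1]$ enters — so that the subtraction defining the covariance is never of the indeterminate form $\infty-\infty$.
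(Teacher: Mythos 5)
Your proof is correct and follows essentially the same route as the paper's (very terse) proof: expand $\bar V^{(1)}$ as a nonnegative sum, interchange expectation and summation by monotone convergence/Tonelli, use independence of the $V^{\vp_i}$ to cancel the cross terms, and insert the one-factor COGARCH formulas \eqref{mean-cog}--\eqref{acf-cog}. Your additional observation that $\pi(\Phi_L^{(2)})=1$ automatically forces $\E[\bar V^{(1)}_t]<\infty$ (via the strict bound $\vp_{\max}^{(2)}<\eta/\E[S_1]$), so that the covariance is never of the indeterminate form $\infty-\infty$, is a worthwhile detail that the paper's one-line proof leaves implicit.
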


\begin{proposition} \label{tail-supCOG1} Assume we are in the setting of Theorem \ref{thm-app1-stat} and let $\bar{V}^{(1)}$ be a strictly stationary solution of \eqref{defapproach1}.
Set $\bar \vp:=\inf\{\vp>0\colon \pi((\vp,\infty))=0\}\leq \vp_\mathrm{max}<\infty$ and assume that there
 exists $\bar\kappa>0$ with
\beq\label{tailcond} \bbe[S^{\bar\kappa}_1\log^+(S_1)] <\infty \quad\text{and}\quad\Psi(\bar\kappa,\bar\vp)=0. \eeq
Then we have for $\kappa>0$
\[ \lim_{x\to\infty} x^\kappa \PP[\bar V^{(1)}_0 > x]=\begin{cases} 0 &\text{if } \kappa < \bar\kappa,\\ \infty &\text{if } \kappa>\bar\kappa, \end{cases} \]
while for $\kappa = \bar\kappa$ there exists a constant $C>0$ such that
\[ \lim_{x\to\infty} x^{\bar{\kappa}} \PP[\bar V^{(1)}_0 > x]=\begin{cases} C &\text{if } \pi(\{\bar\vp\})=\bar p>0, \\ 0 &\text{if } \pi(\{\bar\vp\})=0. \end{cases} \]
\end{proposition}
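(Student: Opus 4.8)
The plan is to reduce everything to the single--COGARCH tail result \cite[Thm.~6]{klm:2006} together with two elementary monotonicities of the map $\vp\mapsto V^\vp_\infty$. First I would record the structural facts. Since $(1+\vp y)^u-1$ is increasing in $\vp$ for $y>0,u>0$, the exponent $\Psi(u,\vp)$ is strictly increasing in $\vp$, so the positive root $\kappa_\vp$ of $\Psi(\cdot,\vp)=0$ is strictly decreasing in $\vp$; in particular $\Psi(\bar\kappa,\bar\vp)=0$ identifies $\bar\kappa=\kappa_{\bar\vp}$ as the smallest tail index occurring in the superposition, and convexity of $\Psi$ in $u$ with $\Psi(0,\bar\vp)=0$ forces $\partial_u\Psi(0,\bar\vp)<0$, which is precisely \eqref{cogstatcond} at $\bar\vp$, so $\bar\vp\in\Phi_L$ and $V^{\bar\vp}_\infty$ is well defined. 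Driving all the $V^\vp$ by one common subordinator shows that $V^\vp_\infty=\beta\int_{\bbr_+}\ee^{-X^\vp_t}\,\dd t$ is pathwise, hence stochastically, increasing in $\vp$. Finally, \eqref{tailcond} is exactly the hypothesis of \cite[Thm.~6]{klm:2006} at parameter $\bar\vp$, giving $\PP[V^{\bar\vp}_\infty>x]\sim C_{\bar\vp}x^{-\bar\kappa}$ for some $C_{\bar\vp}>0$.

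For the lower bounds I would use only nonnegativity of the summands. If $\pi(\{\bar\vp\})=\bar p>0$, then $\bar V^{(1)}_0\ge\bar p\,V^{\bar\vp}_\infty$, whence $\liminf_{x\to\infty}x^{\bar\kappa}\PP[\bar V^{(1)}_0>x]\ge\bar p^{\bar\kappa}C_{\bar\vp}>0$ and, for $\kappa>\bar\kappa$, $x^\kappa\PP[\bar V^{(1)}_0>x]\ge x^{\kappa-\bar\kappa}\bar p^{\bar\kappa}C_{\bar\vp}(1+o(1))\to\infty$. If $\pi(\{\bar\vp\})=0$, then for a fixed $\kappa>\bar\kappa$ I would pick an atom $\vp_j$ with $\bar\kappa<\kappa_{\vp_j}<\kappa$ (possible since the atoms accumulate at $\bar\vp$ and $\kappa_\vp\downarrow\bar\kappa$ as $\vp\uparrow\bar\vp$) and apply the single--COGARCH tail to $V^{\vp_j}_\infty$ to get the same divergence from $\bar V^{(1)}_0\ge p_jV^{\vp_j}_\infty$; should \eqref{tailcond} fail just above $\bar\kappa$, the driving L\'evy tail itself forces a tail of index $\le\bar\kappa<\kappa$, which serves equally well. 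The case $\kappa<\bar\kappa$ is immediate once the upper bounds below yield $\E[(\bar V^{(1)}_0)^\kappa]<\infty$.

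The upper bounds are the heart of the matter, and I would organise them around the single estimate that the remainder $R:=\sum_{i:\vp_i<\bar\vp}p_iV^{\vp_i}_\infty$ satisfies $\PP[R>x]=o(x^{-\bar\kappa})$. Granting this, independence of $H:=\sum_{i:\vp_i=\bar\vp}p_iV^{\vp_i}_\infty$ and $R$ plus the convolution closure of regularly varying tails gives $\PP[\bar V^{(1)}_0>x]=\PP[H+R>x]\sim\PP[H>x]$, which equals $\bar p^{\bar\kappa}C_{\bar\vp}x^{-\bar\kappa}$ in the atomic case (so the constant $C=\bar p^{\bar\kappa}C_{\bar\vp}$ matches the lower bound) and is $o(x^{-\bar\kappa})$ in the nonatomic case. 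To prove the estimate I would split $R=R'_\epsilon+R''_\epsilon$ at level $\bar\vp-\epsilon$. For $R'_\epsilon$ (atoms $\vp_i\le\bar\vp-\epsilon$) one has $\kappa_{\vp_i}\ge\kappa_{\bar\vp-\epsilon}>\bar\kappa$, so choosing $\delta=\delta(\epsilon)>0$ with $\bar\kappa+\delta<\kappa_{\bar\vp-\epsilon}$ gives $\Psi(\bar\kappa+\delta,\vp_i)\le\Psi(\bar\kappa+\delta,\bar\vp-\epsilon)<0$ uniformly; a uniform moment bound for $\E[(V^\vp_\infty)^{\bar\kappa+\delta}]$ via \eqref{momentcondcogarch} then yields $\E[(R'_\epsilon)^{\bar\kappa+\delta}]<\infty$, hence $\PP[R'_\epsilon>x]=o(x^{-\bar\kappa})$. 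For $R''_\epsilon$ (atoms in $(\bar\vp-\epsilon,\bar\vp)$) I would dominate each $V^{\vp_i}_\infty$ stochastically by an independent copy of $V^{\bar\vp}_\infty$ and use the tail of a weighted sum of i.i.d.\ regularly varying variables to obtain $\limsup_x x^{\bar\kappa}\PP[R''_\epsilon>x]\le C_{\bar\vp}\sum_{i:\,\bar\vp-\epsilon<\vp_i<\bar\vp}p_i^{\bar\kappa}$, which tends to $0$ as $\epsilon\downarrow0$; letting $\epsilon\downarrow0$ closes the estimate.

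The main obstacle is precisely this last step when $\bar\kappa<1$, for then $\sum_i p_i^{\bar\kappa}$ may diverge and the crude domination of $R''_\epsilon$ by i.i.d.\ copies of $V^{\bar\vp}_\infty$ produces an a.s.\ infinite bound. In that regime I would not dominate by the $\bar\vp$-process but exploit the strictly lighter tails of the $V^{\vp_i}_\infty$ with $\vp_i<\bar\vp$: for atoms in $(\bar\vp-\epsilon,\bar\vp)$ one still has $\kappa_{\vp_i}>\bar\kappa$, and a careful accounting of the individual contributions---using moment bounds of order slightly above $\bar\kappa$ that degrade as $\vp_i\uparrow\bar\vp$, balanced against the vanishing mass $\pi((\bar\vp-\epsilon,\bar\vp))\to0$---is required to show that the near part contributes nothing at the scale $x^{-\bar\kappa}$. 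Handling this accumulation uniformly, together with verifying the uniform COGARCH moment bounds and the exact constant in the weighted-sum tail asymptotics, is where the real work lies; the remaining deductions, including the three regimes $\kappa\lessgtr\bar\kappa$, are then routine consequences of regular variation of index $-\bar\kappa$.
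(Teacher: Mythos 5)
Your proposal is correct in outline and takes essentially the same route as the paper's proof: both rest on the single-COGARCH tail asymptotics \cite[Thm.~6]{klm:2006}, the pathwise domination $V^{i,\vp_i}_0\le V^{i,\bar\vp}_0$ (each pair driven by the same subordinator) which reduces matters to a weighted sum of i.i.d.\ copies of $V^{\bar\vp}_0$ handled via \cite[Lemma A3.26]{EKM}, lower bounds obtained from single atoms, and a split of the atoms at a level tending to $\bar\vp$ in the case $\kappa=\bar\kappa$, $\pi(\{\bar\vp\})=0$ --- your only deviations (controlling the far atoms by uniform moments of order $\bar\kappa+\delta$ instead of \cite[Lemma 2]{klm:2006}, and funnelling the three regimes through one remainder estimate plus convolution closure) are cosmetic. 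The obstacle you single out as ``the real work'' --- that for $\bar\kappa\le 1$ the series $\sum_i p_i^{\bar\kappa}$ may diverge, so that the i.i.d.\ domination yields an a.s.\ infinite bound --- is genuine, but the paper does not resolve it either: it invokes \cite[Lemma A3.26]{EKM} directly, whose weight condition ($\sum_i p_i^{\bar\kappa-\eps}<\infty$ for some $\eps>0$) is automatic only when $\bar\kappa>1$, so on this point your proposal is more, not less, careful than the published argument.
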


\brem Recall from \cite[Thm.~5]{klm:2006} that the stationary distribution of the COGARCH $V^\vp$ is self-decomposable, i.e. for all $b\in (0,1)$
there exists a random variable $Y_b$ such that $V_\infty^\vp \eqd b (V_\infty^\vp)' + Y_b$ where $(V_\infty^\vp)'$
is an independent copy of $V_\infty^\vp$. Due to the fact that self-decomposability is preserved
under scaling, convolution and taking limits, see e.g. \cite[Prop. V.2.2]{steutel}, it follows directly from \eqref{approach1statsol}
that the stationary distribution of the supCOGARCH 1 process $\bar{V}^{(1)}$ is self-decomposable, too.
\erem

\brem Unless we are in the degenerate case $\pi=\delta_\vp$ and the supCOGARCH is in fact just the COGARCH with parameter $\vp$, the supCOGARCH process $\bar{V}^{(1)}$ is no longer a Markov process with
respect to its augmented natural filtration,  i.e. the smallest filtration such that $\bar{V}^{(1)}$ is adapted
and which satisfies the usual hypotheses of right-continuity and completeness.
But it follows directly from \eqref{defapproach1} that,
letting $\FF^{(1)}=(\cF^{(1)}_t)_{t\geq 0}$ be the augmented natural filtration of $((V_t^{\vp_i})_{i\in\NN})_{t\geq 0}$, we have for every measurable function $f\colon\RR_+\to\RR$ and every $t\geq 0$
\[\E\Big[ f\big(\bar{V}^{(1)}_t\big)\big| \cF^{(1)}_t\Big] = \E\Big[f\big(\bar{V}^{(1)}_t\big)\big| (V_t^{\vp_i})_{i\in\NN} \Big].\]
\end{remark}

\brem
In the representation $\bar{V}^{(1)} = \sum_{i=1}^\infty p_i V^{\vp_i}$
a priori the $\vp_i$ do not have to be pairwise different and still the results of this section remain valid (apart from some obvious notational changes).
\erem

\subsection{The supCOGARCH 2 volatility process} \label{s32}

In order to deal with uncountable superpositions, one possibility is to drop the assumption of independence,
which led to the supCOGARCH 1.
Hence we fix a L\'evy process $L$, define the subordinator $(S_t)_{t\geq 0}$ by \eqref{eq-def-S} and define the superposition as a weighted integral
of COGARCH processes $V^\vp$ as given in \eqref{phisol}
with different parameters $\vp$, but all driven by the single L\'evy process $L$, i.e. we set
\[\bar{V}^{(2)}_t:=\int_{\Phi_L} V^\vp_t \,\pi(\dd \vp),\quad t\geq 0,\]
for some probability measure $\pi$ on the parameter space $\Phi_L$.
To ensure that $\vp\mapsto V^\vp_t$ is measurable at all times and in particular at time $t=0$, we will use two-sided COGARCH processes as in \eqref{cog-2sided} and define the {\sl supCOGARCH 2 volatility process} 
\begin{equation}
 \bar{V}^{(2)}_t
:= \int_{\Phi_L} V_t^{\vp} \,\pi(\dd \vp)
= \beta \int_{\Phi_L} \int_{(-\infty,t]} \ee^{-(X_t^\vp-X_{s}^{\vp})} \,\dd s \, \pi(\dd\vp),\quad t\in\RR, \label{defapproach2}
\end{equation}
for $(X_t^\vp)_{t\in\RR}$ as given in \eqref{eq-defX2sided}. As a consequence, we have for $t\geq 0$
\begin{align}
\bar{V}^{(2)}_t
&= \int_{\Phi_L} V^\vp_0 \,\pi(\dd\vp) +\beta t- \eta \int_{\Phi_L} \int_{(0,t]} V^\vp_{s} \,\dd s \,\pi(\dd \vp) +  \int_{\Phi_L} \int_{(0,t]} \vp V^\vp_{s-} \,\dd S_s\, \pi(\dd\vp) \nonumber \\
&= \bar{V}^{(2)}_0 + \beta t - \eta \int_{(0,t]} \bar{V}^{(2)}_{s} \,\dd s +  \int_{(0,t]} \int_{\Phi_L} \vp V^\vp_{s-} \,\pi(\dd\vp) \,\dd S_s. \label{app2explicit}
\end{align}
In order to ensure that \eqref{defapproach2} is finite, we always assume
\beq\label{conv-bed-2} \int_{\Phi_L} V_0^\vp \,\pi(\dd\vp)<\infty. \eeq
If $\pi=\sum_{i=1}^\infty p_i\delta_{\vp_i}$, we obviously have
$\bar{V}^{(2)}= \sum_{i=1}^\infty p_iV^{\vp_i}$ with dependent summands.

Observe that in this setting all single COGARCH processes jump at the same times and thus we have
\begin{equation}\label{approach2jump}
 \Delta \bar{V}^{(2)}_t=\int_{\Phi_L} \vp V^\vp_{t-}\,\pi(\dd \vp) \Delta S_t,\quad t\geq0.
\end{equation}

\bexam[Ex. \ref{example-vergleich} continued]\rm\label{example-vergleich-2}
Let $\pi=p_1\delta_{\vp_1}+ p_2\delta_{\vp_2}$ with $p_1+p_2=1$ and $\vp_1,\vp_2\in\Phi_L$. Then the supCOGARCH 2 process
is the weighted sum of two COGARCH processes with parameters $\vp_1$ and $\vp_2$, i.e. $\bar{V}^{(2)}_t= p_1 V_t^{\vp_1} + p_2 V^{\vp_2}_t$.
 In contrast to the supCOGARCH 1 process in Example \ref{example-vergleich},
$V^{\vp_1}$ and $V^{\vp_2}$ are driven by the same subordinator, say $S$, of the form \eqref{eq-def-S}.
In Figure \ref{figsup2} we illustrate the typical relationship between the original COGARCH processes and the resulting supCOGARCH 2
process. 
We observe that $V^{\vp_1}$, $V^{\vp_2}$ and $\bar V^{(2)}$ all jump at the same times, with the jump sizes of the supCOGARCH being the weighted average jump sizes of the two COGARCH processes.  \eexam
\setlength{\unitlength}{0.8cm}
\begin{figure}[!ht]
\begin{picture}(0,5)
\put(0,0){\includegraphics{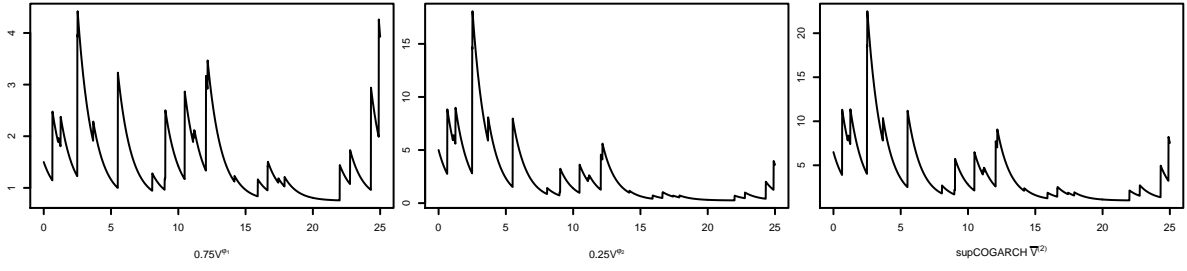}}
\end{picture}
\caption{\scriptsize Sample paths of two COGARCH processes $V^{\vp_1}$ and $V^{\vp_2}$ with different parameters, driven by the same L\'evy process $L$, scaled with the corresponding $p_i$, and the resulting supCOGARCH $\bar V^{(2)}$. The driving L\'evy process $L$ is a compound Poisson process with rate $1$ and standard normal jumps. The parameters are the same as in Figure \ref{figsup1}.}
\label{figsup2}
\end{figure}

In the following we present stationarity and second-order properties of the supCOGARCH process $\bar{V}^{(2)}$. 
Proofs are given in Section~\ref{s62}.

\begin{theorem} \label{thm-app2-stat}
Assume that \eqref{conv-bed-2} holds. Then $(\bar{V}^{(2)}_t)_{t\in\RR}$ as defined in \eqref{defapproach2} is strictly stationary.
\end{theorem}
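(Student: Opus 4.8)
The plan is to reduce strict stationarity to an invariance-in-law of the finite-dimensional distributions under time shifts, and to obtain the latter from the stationarity of the increments of the common driving L\'evy process $L$ (equivalently the two-sided subordinator $S$ of \eqref{eq-def-S2sided}). Fix $h\in\RR$ and introduce the shift $\theta_h S_v := S_{v+h}-S_h$, $v\in\RR$. Since $L$, and hence $S$, has stationary and independent increments, $\theta_h S\eqd S$ as two-sided processes. The whole argument then rests on the observation that the family $(V^\vp_t)_{t\in\RR,\,\vp\in\Phi_L}$ is a single deterministic, shift-covariant functional of $S$, so that shifting time is, in law, the same as replacing $S$ by its equidistributed shift $\theta_h S$.

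Concretely, I would first record that for $s\le t$ the increment $X^\vp_t-X^\vp_s=\eta(t-s)-\sum_{s<u\le t}\log(1+\vp\Delta S_u)$ depends on $S$ only through its jumps on $(s,t]$; see \eqref{eq-defX2sided}. Writing $X^{\vp,(h)}$ for the process built from $\theta_h S$ in the same way, the identity $\Delta(\theta_h S)_v=\Delta S_{v+h}$ gives $X^{\vp,(h)}_t-X^{\vp,(h)}_s=X^\vp_{t+h}-X^\vp_{s+h}$. Substituting $r=s+h$ in the representation \eqref{cog-2sided} then yields
\[V^\vp_{t+h}=\beta\int_{(-\infty,t+h]}\ee^{-(X^\vp_{t+h}-X^\vp_r)}\,\dd r=\beta\int_{(-\infty,t]}\ee^{-(X^{\vp,(h)}_t-X^{\vp,(h)}_s)}\,\dd s=:V^{\vp,(h)}_t,\]
i.e. the time-shifted field $(V^\vp_{t+h})_{t,\vp}$ is exactly the same measurable functional of $\theta_h S$ as $(V^\vp_t)_{t,\vp}$ is of $S$. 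From $\theta_h S\eqd S$ I would conclude that the two fields are equal in law.

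Finally I would transfer this equality from the field to $\bar V^{(2)}$. By \eqref{defapproach2}, for any times $t_1,\dots,t_n$ the vector $(\bar V^{(2)}_{t_j})_{j}$ is the image of the field slice $(\vp\mapsto (V^\vp_{t_j})_j)$ under the fixed map ``integrate componentwise against $\pi$'', and the same fixed map produces $(\bar V^{(2)}_{t_j+h})_j$ from the shifted slice; hence the two vectors have the same law and $\bar V^{(2)}$ is strictly stationary. Two points require care. First, for this transfer to make sense I must use the joint measurability of $(\vp,\om)\mapsto V^\vp_t(\om)$, which is exactly why the two-sided COGARCH of \eqref{cog-2sided} is used; this makes $\bar V^{(2)}_t$ a genuine random variable and integration against $\pi$ a measurable operation on the field, so that the distributional identity passes through. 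Second, finiteness: assumption \eqref{conv-bed-2} gives $\bar V^{(2)}_0<\infty$ a.s., and the shift identity applied distributionally yields $\bar V^{(2)}_t\eqd\bar V^{(2)}_0$ in $[0,\infty]$ for each fixed $t$, so every $\bar V^{(2)}_{t_j}$ is a.s. finite and the finite-dimensional laws are bona fide. The main obstacle I anticipate is precisely this measurability bookkeeping --- phrasing the ``equal in law as random fields'' step so that integration against the uncountable superposition $\pi$ transfers it cleanly; the probabilistic content, namely $\theta_h S\eqd S$ together with the shift-covariance of the integral representation, is then routine.
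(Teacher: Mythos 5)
Your proof is correct, and its overall architecture matches the paper's: both reduce Theorem~\ref{thm-app2-stat} to the joint strict stationarity of the whole family $((V_t^\vp)_{\vp\in\Phi_L})_{t\in\RR}$ driven by the common two-sided subordinator $S$ of \eqref{eq-def-S2sided}, and then push this through the integration against $\pi$ in \eqref{defapproach2}. Where you genuinely differ is in how that joint stationarity is obtained. The paper isolates it as Lemma~\ref{lem-costationaritaet} and proves it by imitating the argument of \cite[Thm.~3.2]{KLM:2004} for the finite-dimensional vectors $(V^{\vp_1},\dots,V^{\vp_m})$ and then invoking \cite[Prop.~2.2]{Kallenberg02} to upgrade equality of finite-dimensional distributions to equality in law of the $\RR^{\Phi_L}$-valued process. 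You instead argue pathwise: the increment formula coming from \eqref{eq-defX2sided} shows that the time-shifted family is the \emph{same} deterministic functional applied to the shifted subordinator $\theta_h S$, and $\theta_h S \eqd S$ because the two-sided L\'evy process has stationary increments. This is more elementary (no Markov-type recursion, no appeal to Kallenberg), and, if one composes the maps, it also handles the measurability point you flag more cleanly than the detour through ``fields equal in law'': the vector $(\bar V^{(2)}_{t_1+h},\dots,\bar V^{(2)}_{t_n+h})$ is pathwise a single fixed measurable functional of the path of $\theta_h S$ (Tonelli applies, since all integrands are nonnegative and jointly measurable, $\vp\mapsto V_t^\vp$ being continuous), so one never needs ``integrate against $\pi$'' to be a measurable operation for the cylinder $\si$-field on $\RR^{\Phi_L}$ --- a subtlety that both the paper's one-line deduction and your intermediate field-level step otherwise leave implicit. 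Your closing remark that \eqref{conv-bed-2} together with the shift identity yields a.s.\ finiteness of $\bar V^{(2)}_t$ for every $t$ is likewise a detail the paper's proof does not spell out, and it is needed for the finite-dimensional laws to be meaningful.
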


Before we can calculate the moments of the stationary supCOGARCH process $\bar{V}^{(2)}$ in Proposition \ref{propapproach2moments} we need to establish
covariances between single COGARCH processes with different
parameters in the following proposition.

\begin{proposition} \label{prop-corrcogarches}
  Let $(S_t)_{t\in \RR}$ be a subordinator without drift, let $\vp, \tilde{\vp}\in \Phi_L$ be fixed and
define the stationary two-sided COGARCH processes
$(V^\vp_t)_{t\in\RR}$, $(V^{\tilde{\vp}}_t)_{t\in\RR}$ according to \eqref{cog-2sided}. If
$$\E [S_1^2 ]<\infty,\quad \Psi(2, \vp)<0 \quad \mbox{and} \quad
\Psi(2, \tvp)<0, $$
then $\bbe[V^\vp_t V^\tvp_{t+h}]<\infty$ for all $t\in \RR$ and $h\geq 0$. In this case, we have for all $t\in\RR$ that
\begin{align} \label{eq-crosscor1}
 \bbe[V^\vp_t V^\tvp_t] &= \frac{\beta^2((\vp+\tvp)\E[S_1]-2\eta)}{(\vp\E[S_1]-\eta)(\tvp\E[S_1]-\eta)((\vp+\tvp)\E[S_1]+ \vp\tvp\var[S_1]-2\eta)},\\
\cov[V^\vp_t, V^\tvp_t]&=\frac{\beta^2 \vp\tvp \var[S_1]}{(\vp\E[S_1]-\eta)(\tvp\E[S_1]-\eta)(2\eta -(\vp+\tvp)\E[S_1]- \vp\tvp\var[S_1])},
\label{eq-crosscor1b}
\end{align}
while for all $t\in\RR$ and $h\geq 0$
\begin{equation} \label{eq-crosscor2}
 \cov[V_t^\vp, V_{t+h}^\tvp] = \ee^{h \Psi(1, \tvp)} \cov[V^\vp_0, V^\tvp_0].
\end{equation}
Both covariances in \eqref{eq-crosscor1b} and \eqref{eq-crosscor2} are nonnegative.
\end{proposition}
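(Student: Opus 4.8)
The plan is to reduce everything to the contemporaneous second moment $P:=\E[V^\vp_t V^\tvp_t]$ plus a single linear ordinary differential equation in the lag $h$. First I would settle finiteness: since $\E[S_1^2]<\infty$ and $\Psi(2,\vp)<0$, the implication \eqref{momentcondcogarch} with $\kappa=2$ gives $\E[(V^\vp_0)^2]<\infty$, and likewise $\E[(V^\tvp_0)^2]<\infty$ from $\Psi(2,\tvp)<0$. By stationarity these bounds hold at every time, so Cauchy--Schwarz yields $\E[V^\vp_t V^\tvp_{t+h}]\le (\E[(V^\vp_0)^2]\,\E[(V^\tvp_0)^2])^{1/2}<\infty$ for all $t\in\R$, $h\ge0$.

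Next I would compute $P$. Both $V^\vp$ and $V^\tvp$ solve \eqref{cog-sde} driven by the \emph{same} subordinator $S$, so $\Delta V^\vp_t=\vp V^\vp_{t-}\Delta S_t$ and $\Delta V^\tvp_t=\tvp V^\tvp_{t-}\Delta S_t$; since $S$ has no drift and no Gaussian part, $[V^\vp,V^\tvp]_t=\vp\tvp\int_{(0,t]}V^\vp_{s-}V^\tvp_{s-}\,\dd[S,S]_s$ with $[S,S]$ having compensator $\var[S_1]\,\dd s$. Applying integration by parts to $V^\vp_t V^\tvp_t$ and inserting the two SDEs together with this bracket term, I would take expectations. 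All integrands $V^\vp_{s-}$, $V^\tvp_{s-}$, $V^\vp_{s-}V^\tvp_{s-}$ are nonnegative with finite (constant in $s$) expectation, so the compensation formula of the type \eqref{expect} turns each $\dd S_s$ into $\E[S_1]\,\dd s$ and each $\dd[S,S]_s$ into $\var[S_1]\,\dd s$, the genuine (local) martingale parts dropping out. Stationarity forces $\tfrac{\dd}{\dd t}\E[V^\vp_t V^\tvp_t]=0$, which collapses the identity to the algebraic relation
\[ \bigl(2\eta-(\vp+\tvp)\E[S_1]-\vp\tvp\var[S_1]\bigr)P=\beta\bigl(\E[V^\vp_0]+\E[V^\tvp_0]\bigr). \]
Inserting $\E[V^\vp_0]=\beta/(\eta-\vp\E[S_1])$ and its analogue from \eqref{mean-cog} and simplifying gives \eqref{eq-crosscor1}; subtracting $\E[V^\vp_0]\E[V^\tvp_0]$ then yields \eqref{eq-crosscor1b} after combining fractions, the numerator conveniently reducing to $\beta^2\vp\tvp\var[S_1]$.

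For the lagged covariance I would freeze $V^\vp_t$ and view $f(h):=\E[V^\vp_t V^\tvp_{t+h}]$ as a function of $h\ge0$. Differentiating $V^\vp_t V^\tvp_{t+h}$ in $h$ along the SDE \eqref{cog-sde} for $V^\tvp$ and again replacing $\dd S$ by $\E[S_1]\,\dd s$ (legitimate since, by independence of increments, $V^\vp_tV^\tvp_{u-}$ is predictable and integrable on the relevant interval) gives the linear ODE $f'(h)=\Psi(1,\tvp)f(h)+\beta\,\E[V^\vp_0]$, using $\Psi(1,\tvp)=\tvp\E[S_1]-\eta$. Its unique solution is $f(h)=\E[V^\vp_0]\E[V^\tvp_0]+\ee^{h\Psi(1,\tvp)}\bigl(f(0)-\E[V^\vp_0]\E[V^\tvp_0]\bigr)$, because the stationary level of the ODE equals $-\beta\E[V^\vp_0]/\Psi(1,\tvp)=\E[V^\vp_0]\E[V^\tvp_0]$. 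Subtracting the product of means and using $f(0)=\E[V^\vp_0 V^\tvp_0]$ gives exactly \eqref{eq-crosscor2}.

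Finally, for nonnegativity I would note that $\Psi(2,\vp)-2\Psi(1,\vp)=\vp^2\var[S_1]\ge0$, so $\Psi(1,\vp)\le\tfrac12\Psi(2,\vp)<0$ and likewise $\Psi(1,\tvp)<0$; hence both factors $\vp\E[S_1]-\eta$ and $\tvp\E[S_1]-\eta$ in \eqref{eq-crosscor1b} are negative and their product positive. For the remaining factor $D:=2\eta-(\vp+\tvp)\E[S_1]-\vp\tvp\var[S_1]$ I would add $\Psi(2,\vp)+\Psi(2,\tvp)<0$, which reads $2(\vp+\tvp)\E[S_1]+(\vp^2+\tvp^2)\var[S_1]<4\eta$, and combine it with $2\vp\tvp\le\vp^2+\tvp^2$ to obtain $2(\vp+\tvp)\E[S_1]+2\vp\tvp\var[S_1]<4\eta$, i.e. $D>0$; thus \eqref{eq-crosscor1b} is nonnegative, and \eqref{eq-crosscor2} inherits nonnegativity from $\ee^{h\Psi(1,\tvp)}>0$. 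The main obstacle throughout is the rigorous justification of passing the expectation through the stochastic integrals, i.e. that the compensated-$S$ integrals really have vanishing mean, which I would handle precisely through the nonnegativity of all integrands together with the finite second moments established at the outset, so that the compensation formula \eqref{expect} applies directly without a separate localization argument.
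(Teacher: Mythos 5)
Your proof is correct, but it reaches \eqref{eq-crosscor1} by a genuinely different route than the paper. For the contemporaneous moment, the paper expands the explicit exponential-functional representation \eqref{cog}, introduces the auxiliary L\'evy process $X=X^\vp+X^\tvp$ together with its Laplace exponent $h(\vp,\tvp)$ (Lemma \ref{lem-processX}), evaluates four exponential-moment integrals $I_1,\dots,I_4$, and then solves for $\E[V^\vp_tV^\tvp_t]$ using stationarity; you instead apply integration by parts to $V^\vp_tV^\tvp_t$ along the SDE \eqref{cog-sde}, identify the bracket $[V^\vp,V^\tvp]_t=\vp\tvp\int_{(0,t]}V^\vp_{s-}V^\tvp_{s-}\,\dd[S,S]_s$, and use the compensation formula for nonnegative predictable integrands plus joint stationarity to reduce everything to one linear algebraic equation. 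Your route is more elementary and self-contained (no Lemma \ref{lem-processX}, no explicit double integrals), and it makes transparent where the cross term $\vp\tvp\var[S_1]$ comes from; the paper's route has the side benefit of producing $h(\vp,\tvp)$, which is reused later (e.g. in the proof of Proposition \ref{secord-supcog3}). For the lagged covariance the two arguments are essentially the same idea in different clothing: your linear ODE for $f(h)=\E[V^\vp_tV^\tvp_{t+h}]$ is the infinitesimal version of the paper's affine decomposition $V^\tvp_{t+h}=A^\tvp_{t,t+h}V^\tvp_t+B^\tvp_{t,t+h}$ in \eqref{eq-splittV}. Two small points: your argument implicitly uses joint stationarity of the pair $(V^\vp_t,V^\tvp_t)_{t\in\RR}$, which is exactly the paper's Lemma \ref{lem-costationaritaet} and should be cited or re-derived from the two-sided construction; and your explicit sign analysis (via $\Psi(2,\vp)-2\Psi(1,\vp)=\vp^2\var[S_1]\ge0$ and $2\vp\tvp\le\vp^2+\tvp^2$) is a welcome addition, since it both proves the nonnegativity claim, which the paper's proof leaves implicit, and verifies that the denominator $2\eta-(\vp+\tvp)\E[S_1]-\vp\tvp\var[S_1]$ is strictly positive, i.e. $h(\vp,\tvp)<0$.
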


Now we can describe the covariance structure of the supCOGARCH process $\bar{V}^{(2)}$.

\begin{proposition} \label{propapproach2moments}
Let $\bar{V}^{(2)}$ be the strictly stationary supCOGARCH 2 process as defined in \eqref{defapproach2}.
Recall the notation $\Phi_L^{(\kappa)}$ from Eq.~\eqref{eq-def-Phimoment}.
\begin{enumerate}
 \item Suppose that $\pi(\Phi_L^{(1)})=1$. Then we have for all $t\geq0$
\begin{equation}\label{mean-supcog2}
   \E[\bar{V}^{(2)}_t]
= \int_{\Phi_L} \E[V_0^\vp]\,\pi(\dd \vp) = \beta \int_{\Phi_L} \frac{1}{\eta-\vp \bbe[S_1]} \,\pi(\dd \vp).
     \end{equation}
\item
Suppose that $\pi(\Phi_L^{(2)})=1$.
Then for $t\in\RR$ and $h\geq 0$ we have
 \begin{align}
\E[(\bar{V}^{(2)}_t)^2] &= \int_{\Phi_L}\int_{\Phi_L}\E[ V_0^{\vp}V_0^{\tvp}]\,\pi(\dd \vp) \,\pi(\dd \tvp), \label{mean2-supcog2}\\
\var[\bar{V}^{(2)}_t] &= \int_{\Phi_L}\int_{\Phi_L} \cov[V_0^{\vp},V_0^{\tvp}]\,\pi(\dd \vp) \,\pi(\dd \tvp), \label{var-supcog2}\\
\cov[\bar{V}^{(2)}_t, \bar{V}^{(2)}_{t+h}]
 &= \int_{\Phi_L}\int_{\Phi_L} \cov[V_0^\vp, V_{h}^{\tvp}] \,\pi(\dd \vp) \,\pi(\dd \tvp),\label{acf-supcog2}
 \end{align}
with $\E[ V_0^{\vp}V_0^\tvp]$ and $\cov[V_0^\vp, V_{h}^\tvp]$ as given in Proposition \ref{prop-corrcogarches}.
\end{enumerate}
Note that the quantities in \eqref{mean-supcog2}, \eqref{mean2-supcog2}, \eqref{var-supcog2} and \eqref{acf-supcog2} may be infinite.
 \end{proposition}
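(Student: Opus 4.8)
The plan is to exploit the linear representation $\bar{V}^{(2)}_t=\int_{\Phi_L}V_t^\vp\,\pi(\dd\vp)$ together with the strict stationarity from Theorem~\ref{thm-app2-stat}, which lets me reduce every quantity at time $t$ to the corresponding quantity at time $0$ (or at lag $h$), and then to interchange the expectation and covariance operators with the $\pi$-integrals. The decisive structural fact is that each $V_t^\vp$ is nonnegative and that, by Proposition~\ref{prop-corrcogarches}, the cross-covariances $\cov[V_0^\vp,V_0^\tvp]$ and $\cov[V_0^\vp,V_h^\tvp]$ are nonnegative as well. Hence every integrand that appears has a definite sign, Tonelli's theorem applies without any integrability hypothesis, and each asserted identity holds as an equality in $[0,\infty]$, which is precisely what the stated caveat permits. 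Joint measurability of $(\om,\vp)\mapsto V_t^\vp$, needed throughout, is exactly what the two-sided construction in \eqref{defapproach2} was set up to provide.

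For part~(a), strict stationarity gives $\E[\bar{V}^{(2)}_t]=\E[\bar{V}^{(2)}_0]$, and since $V_0^\vp\ge0$ Tonelli yields $\E[\bar{V}^{(2)}_0]=\int_{\Phi_L}\E[V_0^\vp]\,\pi(\dd\vp)$. Inserting the stationary COGARCH mean $\E[V_0^\vp]=\beta/(\eta-\vp\E[S_1])$ from \eqref{mean-cog} gives the claim; the assumption $\pi(\Phi_L^{(1)})=1$ guarantees that the integrand is finite for $\pi$-almost every $\vp$, although the integral itself may still diverge.

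For part~(b), I would first write $(\bar{V}^{(2)}_t)^2=\int_{\Phi_L}\int_{\Phi_L}V_t^\vp V_t^\tvp\,\pi(\dd\vp)\,\pi(\dd\tvp)$ and, the integrand being nonnegative, apply Tonelli to obtain $\E[(\bar{V}^{(2)}_t)^2]=\int_{\Phi_L}\int_{\Phi_L}\E[V_t^\vp V_t^\tvp]\,\pi(\dd\vp)\,\pi(\dd\tvp)$; stationarity then replaces $\E[V_t^\vp V_t^\tvp]$ by $\E[V_0^\vp V_0^\tvp]$, which is the quantity computed in \eqref{eq-crosscor1}. The variance identity \eqref{var-supcog2} follows by subtracting $(\E[\bar{V}^{(2)}_t])^2=\int_{\Phi_L}\int_{\Phi_L}\E[V_0^\vp]\E[V_0^\tvp]\,\pi(\dd\vp)\,\pi(\dd\tvp)$ and using $\E[V_0^\vp V_0^\tvp]-\E[V_0^\vp]\E[V_0^\tvp]=\cov[V_0^\vp,V_0^\tvp]$. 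For the lagged covariance I would invoke bilinearity of covariance combined with Fubini: in the finite case the Cauchy--Schwarz bound $\E[V_0^\vp V_h^\tvp]\le\sqrt{\E[(V_0^\vp)^2]\,\E[(V_0^\tvp)^2]}$ (using $V_h^\tvp\eqd V_0^\tvp$) provides the integrable majorant needed to justify the interchange yielding $\cov[\bar{V}^{(2)}_t,\bar{V}^{(2)}_{t+h}]=\int_{\Phi_L}\int_{\Phi_L}\cov[V_t^\vp,V_{t+h}^\tvp]\,\pi(\dd\vp)\,\pi(\dd\tvp)$, and stationarity replaces $\cov[V_t^\vp,V_{t+h}^\tvp]$ by $\cov[V_0^\vp,V_h^\tvp]$ as in \eqref{eq-crosscor2}.

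The step I expect to be most delicate is the lagged-covariance identity in the regime where the second-moment integral diverges, since one can then no longer form $\E[\bar{V}^{(2)}_t\bar{V}^{(2)}_{t+h}]-\E[\bar{V}^{(2)}_t]\E[\bar{V}^{(2)}_{t+h}]$ by naive subtraction of infinities. I would circumvent this by working directly with the nonnegative integrand: Proposition~\ref{prop-corrcogarches} ensures $\cov[V_0^\vp,V_h^\tvp]\ge0$, so $\int_{\Phi_L}\int_{\Phi_L}\cov[V_0^\vp,V_h^\tvp]\,\pi(\dd\vp)\,\pi(\dd\tvp)$ is unambiguously defined in $[0,\infty]$, and I would match it to the left-hand side through a monotone-convergence argument along an increasing exhaustion of $\Phi_L$ by sets on which $\E[(V_0^\vp)^2]$ stays bounded, applying the already-established finite-case identity on each piece and passing to the limit on both sides. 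Everything else reduces to bookkeeping once Theorem~\ref{thm-app2-stat} and Proposition~\ref{prop-corrcogarches} are in hand.
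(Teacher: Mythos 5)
Your proof is correct and takes essentially the same route as the paper: the paper's entire argument is the one-line observation that all appearing processes are nonnegative, so Tonelli's theorem applied directly to the definition of $\bar{V}^{(2)}$ (combined with stationarity and the cross-moment formulas of Proposition~\ref{prop-corrcogarches}) yields every stated identity. Your exhaustion/monotone-convergence treatment of the lagged covariance when second moments diverge is a careful elaboration of a point the paper leaves implicit, not a different method.
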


The tail behaviour of $\bar V^{(2)}$ is similar to the tail behaviour of the supCOGARCH~1 process.
\begin{proposition}\label{prop-tails-sup2}
Let $\bar{V}^{(2)}$ be the strictly stationary supCOGARCH 2 process as defined in \eqref{defapproach2}.
Set $\bar \vp:=\inf\{\vp>0\colon \pi((\vp,\infty))=0\}\leq\vp_{\mathrm{max}}<\infty$ and assume that there exists $\bar\kappa>0$ such that \eqref{tailcond} holds. Then we have for $\kappa>0$
\[ \lim_{x\to\infty} x^\kappa \PP[\bar V^{(2)}_0 > x]=\begin{cases} 0 &\text{if } \kappa < \bar\kappa,\\ \infty &\text{if } \kappa>\bar\kappa, \end{cases} \]
while for $\kappa = \bar\kappa$ there exists a constant $C>0$ such that
\[ \lim_{x\to\infty} x^{\bar{\kappa}} \PP[\bar V^{(2)}_0 > x]=\begin{cases} C &\text{if } \pi(\{\bar\vp\})=\bar p>0, \\ 0 &\text{if } \pi(\{\bar\vp\})=0. \end{cases}
\]
\end{proposition}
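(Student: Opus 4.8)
The plan is to exploit a pathwise monotonicity that is special to the supCOGARCH~2 construction: since all the COGARCH processes $V^\vp$ entering \eqref{defapproach2} are driven by the \emph{same} subordinator $S$, the two-sided representation \eqref{cog-2sided}--\eqref{eq-defX2sided} yields, for every $\om$,
\[ V_0^\vp = \beta\int_{(-\infty,0]} \ee^{\eta s}\prod_{s<u\le0}(1+\vp\,\Delta S_u)\,\dd s, \]
which is nondecreasing in $\vp$ because each factor $1+\vp\,\Delta S_u$ is. Hence $\vp\mapsto V_0^\vp$ is a.s.\ nondecreasing, and since $\pi$ is a probability measure supported in $[0,\bar\vp]$ we get the sandwich $\bar p\,V_0^{\bar\vp}\le \bar V^{(2)}_0\le V_0^{\bar\vp}$ whenever $\bar p=\pi(\{\bar\vp\})>0$, and more generally $\pi([\vp',\vp''])\,V_0^{\vp'}\le\int_{[\vp',\vp'']}V_0^\vp\,\pi(\dd\vp)\le\pi([\vp',\vp''])\,V_0^{\vp''}$ for $\vp'\le\vp''$. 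I would combine this with the precise COGARCH tail of \cite[Thm.~6]{klm:2006}: for each admissible $\vp$ one has $\PP[V_0^\vp>x]\sim C(\vp)x^{-\kappa(\vp)}$ with $C(\vp)>0$, where $\kappa(\vp)$ is the positive root of $u\mapsto\Psi(u,\vp)$. Since $\Psi(u,\vp)$ is strictly increasing in $\vp$ for each fixed $u>0$, the root $\kappa(\vp)$ is strictly decreasing in $\vp$; in particular $\kappa(\bar\vp)=\bar\kappa$ is the smallest index and $\kappa(\vp)>\bar\kappa$ for $\vp<\bar\vp$. The log-moment needed at any such $\vp$ is automatic: the Kesten root $\kappa(\vp)$ always lies strictly below the moment boundary $\sup\{u:\E[S_1^u]<\infty\}$ (because $\Psi(\cdot,\vp)\to+\infty$ there), and \eqref{tailcond} supplies the borderline integrability at $\bar\kappa$ itself.

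With these two ingredients the cases $\kappa\neq\bar\kappa$ are immediate. If $\kappa<\bar\kappa$, the upper sandwich gives $x^\kappa\PP[\bar V^{(2)}_0>x]\le x^\kappa\PP[V_0^{\bar\vp}>x]\sim C(\bar\vp)x^{\kappa-\bar\kappa}\to0$. If $\kappa>\bar\kappa$ and $\bar p>0$, the lower sandwich gives $x^\kappa\PP[\bar V^{(2)}_0>x]\ge x^\kappa\PP[\bar p\,V_0^{\bar\vp}>x]\sim \bar p^{\bar\kappa}C(\bar\vp)x^{\kappa-\bar\kappa}\to\infty$; if instead $\bar p=0$, I pick $\vp'<\bar\vp$ with $\bar\kappa<\kappa(\vp')<\kappa$ (possible by continuity of $\kappa(\cdot)$), note $q':=\pi((\vp',\bar\vp))>0$, and use $\bar V^{(2)}_0\ge q'V_0^{\vp'}$ to get $x^\kappa\PP[\bar V^{(2)}_0>x]\gtrsim x^{\kappa-\kappa(\vp')}\to\infty$.

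The heart of the matter is $\kappa=\bar\kappa$, and the key lemma I would isolate is: \emph{any integral $\int_{[0,\bar\vp)}V_0^\vp\,\rho(\dd\vp)$ over a finite measure $\rho$ whose support has supremum at most $\bar\vp$ but does not charge $\bar\vp$ has tail $o(x^{-\bar\kappa})$.} This follows by splitting at $\vp'\uparrow\bar\vp$: the part over $[0,\vp']$ is dominated by $\rho([0,\vp'])V_0^{\vp'}$, of order $x^{-\kappa(\vp')}=o(x^{-\bar\kappa})$, while the part over $[\vp',\bar\vp)$ is dominated by $\rho([\vp',\bar\vp))V_0^{\bar\vp}$, contributing at most $(1-\theta)^{-\bar\kappa}\rho([\vp',\bar\vp))^{\bar\kappa}C(\bar\vp)$ to $\limsup x^{\bar\kappa}\PP[\,\cdot>x]$ for any splitting parameter $\theta\in(0,1)$; letting $\vp'\uparrow\bar\vp$ makes $\rho([\vp',\bar\vp))\to0$, so the $\limsup$ vanishes. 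Applying this to $R:=\int_{[0,\bar\vp)}V_0^\vp\,\pi(\dd\vp)$ shows $\PP[R>x]=o(x^{-\bar\kappa})$. When $\bar p=0$ we have $\bar V^{(2)}_0=R$ and the limit is $0$. When $\bar p>0$ we write $\bar V^{(2)}_0=\bar p\,V_0^{\bar\vp}+R$; although $R$ and $V_0^{\bar\vp}$ are dependent, the probability splitting $\{\bar p V_0^{\bar\vp}+R>x\}\subseteq\{\bar p V_0^{\bar\vp}>(1-\theta)x\}\cup\{R>\theta x\}$ together with the reverse bound $\bar V^{(2)}_0\ge \bar p V_0^{\bar\vp}$ pinches the limit, giving $\lim_{x\to\infty}x^{\bar\kappa}\PP[\bar V^{(2)}_0>x]=\bar p^{\bar\kappa}C(\bar\vp)=:C>0$.

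The main obstacle is precisely this last step: because the summands in $\bar V^{(2)}_0$ are dependent (unlike the independent summands of the supCOGARCH~1), one cannot invoke closure of regular variation under independent convolution as in Proposition~\ref{tail-supCOG1}. The resolution is the monotonicity-driven observation that the remainder $R$ has \emph{strictly} lighter tail than $V_0^{\bar\vp}$, so the crude union and domination bounds suffice to squeeze out the exact constant regardless of dependence. A secondary point to verify carefully is that the precise COGARCH tail of \cite{klm:2006} is available at every $\vp\le\bar\vp$ used above; this is guaranteed by the strict inequality $\kappa(\vp)<\sup\{u:\E[S_1^u]<\infty\}$ noted in the first paragraph.
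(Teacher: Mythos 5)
Your proof is correct and is essentially the paper's own argument: the paper likewise replaces the independence-based steps of Proposition~\ref{tail-supCOG1} by the pathwise monotonicity of $\vp\mapsto V_0^\vp$ (all components being driven by the same subordinator), using the domination $\PP[\bar V^{(2)}_0>x]\le\PP[V^{\bar\vp}_0>x]$ for $\kappa<\bar\kappa$, lower bounds of the type $\pi((\vp',\bar\vp])\,V_0^{\vp'}$ for $\kappa>\bar\kappa$, the split $\pi((0,\vp])V_0^{\vp}+\pi((\vp,\bar\vp])V_0^{\bar\vp}$ with $\vp\uparrow\bar\vp$ when $\kappa=\bar\kappa$, $\bar p=0$, and the decomposition $\bar V^{(2)}_0=\int_{(0,\bar\vp)}V_0^\vp\,\pi(\dd\vp)+\bar p\,V_0^{\bar\vp}$ with exactly your union-bound pinching when $\bar p>0$, yielding the same constant $C=\bar p^{\bar\kappa}C(\bar\vp)$. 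One small caveat: your parenthetical justification for the existence of the root $\kappa(\vp)$ at every $\vp<\bar\vp$ (``because $\Psi(\cdot,\vp)\to+\infty$ at the moment boundary'') is not valid in general, since $\Psi(\cdot,\vp)$ can stay negative up to $\sup\{u\colon\E[S_1^u]<\infty\}$ and jump to $+\infty$ beyond it; however, the paper's proof of Proposition~\ref{tail-supCOG1}, on which its proof of the present proposition rests, asserts the identical existence statement via \cite[Lemma 4.1]{KLM:2004}, so this is a shared fine point rather than a deviation from the paper.
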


\brem Similarly to $\bar{V}^{(1)}$, the process $\bar{V}^{(2)}$ is no Markov process with respect to its augmented natural filtration (unless in the degenerate case $\pi=\delta_\vp$), but again we have a Markov property in a wide sense. More precisely, for $\FF^{(2)}=(\cF^{(2)}_t)_{t\geq 0}$
being the augmented natural filtration of $((V_t^{\vp})_{\vp\in\Phi_L})_{t\geq 0}$, we obtain for every measurable function
$f\colon\RR_+\to\RR$ and every $t\geq 0$
\[\E\Big[ f\big(\bar{V}^{(2)}_t\big)\big| \cF^{(2)}_t\Big]
= \E\left[f\big(\bar{V}^{(2)}_t\big)\big| (V_t^{\vp})_{\vp\in\Phi_L} \right].\]
\end{remark}

\subsection{The supCOGARCH 3 volatility process}\label{s33}

Our third superposition model invokes a L\'evy basis $\La^L$ on $\bbr\times\Phi_L$ such that
\[ L_t:=\La^L((0,t]\times\Phi_L),\quad t\geq0,\quad L_t:=-\La^L((-t,0]\times\Phi_L), \quad t<0, \]
exists for every $t\in\bbr$. 
With $\La^S:=[\La^L,\La^L]^\dd$ in the sense of \eqref{pjqvm}, $\La^S$ is of the form \eqref{subordbasis} and we assume that the predictable compensator of $\mu^{\La^S}$ is given by $\Pi^S(\dd t,\dd y,\dd \vp)=\dd t\,\nu_S(\dd y)\,\pi(\dd\vp)$, where $\pi$ is a probability measure on $\Phi_L$ and $\nu_S$ the L\'evy measure of the following two-sided subordinator:
\begin{equation}\label{eq-Sfrombasis}
 S_t:=\La^S((0,t]\times\Phi_L),\quad t\geq0,\quad S_t:=-\La^S((-t,0]\times\Phi_L), \quad t<0.
\end{equation}
For every $\vp\in\Phi_L$ we denote by $V^\vp$ the two-sided COGARCH process driven by $S$ as in \eqref{cog-2sided}.
The {\sl supCOGARCH 3 volatility process} $\bar V^{(3)}$ is then defined by the integral equation
\beq\label{eq-def-approach3}
 \bar{V}^{(3)}_t = \bar V^{(3)}_0 + \beta t -  \eta \int_{(0,t]}  \bar V^{(3)}_s\,\dd s + \int_{(0,t]}\int_{\Phi_L} \vp V^\vp_{s-} \,\La^S(\dd s,\dd\vp),\quad t\geq0,
\eeq
where $\bar V^{(3)}_0$ is some starting random variable independent of the restriction of $\La^L$ to $\bbr_+\times\Phi_L$
From \eqref{eq-def-approach3} it follows directly that
\begin{equation}\label{approach3jump}
 \Delta \bar{V}^{(3)}_t=\int_{\bbr_+\times\Phi_L} \vp V^\vp_{t-} y\, \mu^{\La^S} (\{t\},\dd\vp, \dd y), \quad t\geq0.
\end{equation}

We present now conditions for stationarity and calculate the second order properties.
The proofs can be found in Section~\ref{s63}.

\bpr \label{prop-app3solution}
The stochastic integral equation \eqref{eq-def-approach3} has a unique solution given by
\beq\label{supcogarch2-explicit}
\bar V^{(3)}_t=\ee^{-\eta t} \left(\bar V^{(3)}_0 + \beta \int_{(0,t]} \ee^{\eta s}\,\dd s + \int_{(0,t]} \ee^{\eta s}\,\dd A_s\right), \quad t\geq0, \eeq
where
\beq\label{processA} A_t:=\int_{(0,t]}\int_{\Phi_L} \vp V^\vp_{s-}\,\La^S(\dd s,\dd \vp),\quad t\geq0, \eeq
is a semimartingale with increasing sample paths, finite at every fixed $t\geq0$.
\epr

\bexam[Ex. \ref{example-vergleich} and \ref{example-vergleich-2} continued]\label{example-vergleich-3}
Let $\pi=p_1\delta_{\vp_1}+ p_2\delta_{\vp_2}$ with $p_1+p_2=1$ and $\vp_1,\vp_2\in\Phi_L$. As opposed to the supCOGARCH 1 process
in Example \ref{example-vergleich} or the supCOGARCH~2 process in Example \ref{example-vergleich-2}, the supCOGARCH 3 process is not
the sum of two (independent or dependent) COGARCH processes. 
In fact, there is a subordinator $S$ driving two COGARCH processes
$V^{\vp_1}$ and $V^{\vp_2}$ and each time when $S$ jumps, a value of $\vp$ is randomly chosen from $\{\vp_1,\vp_2\}$: $\vp$ takes the value $\vp_1$ with probability $p_1$ and the value $\vp_2$ with probability $p_2$. Now the jump size of the supCOGARCH 3 at a particular jump time of $S$ is exactly the jump size of the COGARCH with the chosen parameter $\vp$. If $(T_i)_{i\in\bbn}$
denote the jump times of $S$, we have \[ \Delta \bar V^{(3)}_{T_i} = \Delta V^{\vp_i}_{T_i} = \vp_i V^{\vp_i}_{T_i-}\Delta S_{T_i},\quad i\in\bbn, \]
and $(\vp_i)_{i\in\bbn}$ is an i.i.d. sequence with distribution $\pi$.
Moreover, $(\vp_i)_{i\in\bbn}$ is independent of $S$. This effect is illustrated in Figure \ref{figsup3}.
\eexam
\setlength{\unitlength}{0.8cm}
\begin{figure}[!ht]
\begin{picture}(0,5)
\put(0,0){\includegraphics{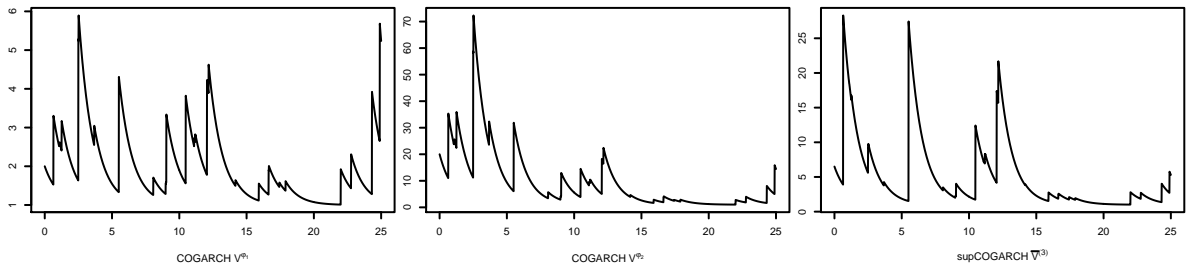}}
\end{picture}
\caption{\scriptsize Two COGARCH processes $V^{\vp_1}$ and $V^{\vp_2}$ driven by the same L\'evy process $L$ and the resulting supCOGARCH $\bar V^{(3)}$. $L$ is a compound Poisson process with rate $1$ and standard normal jumps. The parameters are the same as in Figure~ \ref{figsup1}.}
\label{figsup3}
\end{figure}

The next theorem establishes necessary and sufficient conditions for the existence of a stationary distribution of the supCOGARCH 3 process.

\bthe \label{thm-approach3statsol}
Define the supCOGARCH 3 process $(\bar{V}^{(3)}_t)_{t\geq 0}$ by \eqref{supcogarch2-explicit}. Then a finite random variable $\bar{V_0}^{(3)}$ can be chosen
such that $\bar{V}^{(3)}$ is strictly stationary if and only if
\beq\label{statcond}
\int_{\bbr_+} \int_{\Phi_L} \int_{\bbr_+} 1\wedge (y \vp V_s^\vp \ee^{-\eta s}) \,\dd s\,\pi(\dd\vp)\,\nu_S(\dd y)
< \infty\quad\text{a.s.} \eeq
In the case that a stationary distribution exists, it is uniquely determined by the law of $\frac{\beta}{\eta}+\int_{\bbr_+} \ee^{-\eta s} \,\dd A_s$.
In particular, setting $\bar V^{(3)}_0 := \frac{\beta}{\eta}+\int_{(-\infty,0]}\int_{\Phi_L} \ee^{\eta s}\vp V^\vp_{s-}\,\La^S(\dd s,\dd\vp)$, we obtain the two-sided
stationary supCOGARCH 3 process
\begin{equation} \label{supcogarch3-explicitstationary}
 \bar V^{(3)}_t = \ee^{-\eta t} \left( \beta \int_{(-\infty,t]} \ee^{\eta s}\,\dd s + \int_{(-\infty,t]} \ee^{\eta s}\,\dd A_s \right)
= \frac{\beta}{\eta} + \int_{(-\infty,t]}\int_{\Phi_L} \ee^{-\eta(t-s)}\vp V^\vp_{s-}\,\La^S(\dd s,\dd\vp)
\end{equation}
 for $t\in\RR$.
Moreover, \eqref{statcond} holds in each of the following cases:
\begin{enumerate} \setlength{\itemsep}{-0.2ex}
\setlength{\parsep}{0ex}
\item $\pi([0,\vp_0])=1$ with some $\vp_0<\vp_{\mathrm{max}}$.
\item $\pi(\Phi_L^{(\kappa)}) = 1$ for some $\kappa>0$.
\end{enumerate}
\ethe

The second-order properties of the strictly stationary supCOGARCH 3 process are as follows.

\begin{proposition}\label{secord-supcog3}
Let $\bar{V}^{(3)}$ be the stationary supCOGARCH 3 process given by \eqref{supcogarch3-explicitstationary}.
Recall the notation $\Phi_L^{(\kappa)}$ from Eq.~\eqref{eq-def-Phimoment}.
\begin{enumerate}
 \item
Assume that $\pi(\Phi_L^{(1)})=1$. Then for $t\in\bbr$
\beq\label{mean-supcog3}
\bbe[\bar V^{(3)}_t]=\int_{\Phi_L} \bbe[V^\vp_0]\,\pi(\dd\vp)= \int_{\Phi_L} \frac{\beta}{\eta-\E[S_1]\vp}\,\pi(\dd\vp).
\eeq
\item
Assume that $\pi(\Phi_L^{(2)})=1$. Then
with $\bbe[V_0^\vp V_0^\tvp]$ and $\cov[V_0^\vp,V_0^\tvp]$ as given in Proposition~\ref{prop-corrcogarches},
for $t\in\bbr$ and $h\ge0$ we have
\begin{equation}\label{mean2-supcog3} \bbe[(\bar V_t^{(3)})^2]= \int_{\Phi_L}\int_{\Phi_L} \left(\bbe[V_0^\vp V_0^\tvp]  + \frac{\beta}{\eta} \frac{\var[V_0^\vp]-\cov[V_0^\vp,V_0^\tvp]}{\bbe[V_0^\vp]} \right)\,\pi(\dd\tvp)\,\pi(\dd\vp). \end{equation}
\begin{align}\label{acf-supcog3}
\lefteqn{\cov[\bar V_t^{(3)},\bar V_{t+h}^{(3)}]}\\
=&~ \int_{\Phi_L}\int_{\Phi_L} \left( \ee^{h \Psi(1,\vp)} \cov[V_0^\vp,V_0^\tvp]
+ \ee^{-\eta h}\frac{\beta}{\eta} \frac{\var[V_0^\vp]-\cov[V_0^\vp,V_0^\tvp]}{\bbe[V_0^\vp]}  \right)\,\pi(\dd\tvp)\,\pi(\dd\vp).\nonumber
\end{align}
\end{enumerate}
Note that the quantities in \eqref{mean-supcog3}, \eqref{mean2-supcog3} and \eqref{acf-supcog3} may be infinite.
\end{proposition}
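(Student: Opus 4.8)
The plan is to reduce everything to the already-established moment structure of the supCOGARCH~2 process by conditioning on the driving subordinator. Let $\cals:=\sigma(S)$ be the $\sigma$-field generated by the two-sided subordinator $S$ from \eqref{eq-Sfrombasis}. Since each two-sided COGARCH $V^\vp$ is a deterministic functional of $S$ (via \eqref{eq-defX2sided} and \eqref{cog-2sided}), the whole family $(V^\vp)_{\vp\in\Phi_L}$ is $\cals$-measurable, whereas, as described in Example~\ref{example-vergleich-3}, the L\'evy basis $\La^S$ has the structure of a marked point process: at each jump time $T_i$ of $S$ (with jump height $y_i=\Delta S_{T_i}$) an independent mark $\vp_i$ is drawn from $\pi$, and the $\vp_i$ are i.i.d.\ and independent of $\cals$. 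From the stationary representation \eqref{supcogarch3-explicitstationary} one obtains the pathwise identity
\[ \bar V^{(3)}_t = \frac{\beta}{\eta} + \sum_{T_i\le t} \ee^{-\eta(t-T_i)}\, y_i\, \vp_i V^{\vp_i}_{T_i-},\quad t\in\RR. \]

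The first key step is to show that conditioning on $\cals$ recovers the supCOGARCH~2 process $\bar V^{(2)}$ driven by the same $S$:
\[ \E[\bar V^{(3)}_t\mid \cals] = \frac{\beta}{\eta} + \sum_{T_i\le t}\ee^{-\eta(t-T_i)} y_i \int_{\Phi_L} \vp V^\vp_{T_i-}\,\pi(\dd\vp) = \bar V^{(2)}_t, \]
the last equality using \eqref{app2explicit} (solved with the integrating factor $\ee^{\eta t}$). As all integrands are nonnegative this is justified by Tonelli's theorem and the compensation formula \eqref{expect}, with no integrability required. Taking expectations and invoking Proposition~\ref{propapproach2moments}(a) immediately yields the mean formula \eqref{mean-supcog3}.

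For the second moment and the autocovariance I would use the conditional-variance decomposition relative to $\cals$. Given $\cals$ the summands $\vp_i V^{\vp_i}_{T_i-}$ are conditionally independent (the only remaining randomness sits in the i.i.d.\ marks $\vp_i$), so the conditional variance collapses to a single sum over jumps,
\[ \var(\bar V^{(3)}_t\mid\cals) = \sum_{T_i\le t}\ee^{-2\eta(t-T_i)} y_i^2\Big( \int_{\Phi_L}(\vp V^\vp_{T_i-})^2\,\pi(\dd\vp) - \Big(\int_{\Phi_L}\vp V^\vp_{T_i-}\,\pi(\dd\vp)\Big)^2\Big), \]
and the same computation at times $t$ and $t+h$ shows, since the jumps in $(t,t+h]$ carry conditionally independent marks, that $\cov(\bar V^{(3)}_t,\bar V^{(3)}_{t+h}\mid\cals)=\ee^{-\eta h}\,\var(\bar V^{(3)}_t\mid\cals)$. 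Rewriting the sum as an integral against $\mu^{\La^S}$ and applying \eqref{expect} with the compensator $\dd s\,\nu_S(\dd y)\,\pi(\dd\vp)$ (using $\int_{\RR_+}y^2\,\nu_S(\dd y)=\var[S_1]$ and $\int_{-\infty}^t\ee^{-2\eta(t-s)}\,\dd s=1/(2\eta)$) together with stationarity gives
\[ \E[\var(\bar V^{(3)}_t\mid\cals)] = \frac{\var[S_1]}{2\eta}\Big( \int_{\Phi_L}\vp^2\E[(V^\vp_0)^2]\,\pi(\dd\vp) - \int_{\Phi_L}\int_{\Phi_L}\vp\tvp\,\E[V^\vp_0 V^\tvp_0]\,\pi(\dd\vp)\,\pi(\dd\tvp)\Big). \]
Combining $\E[(\bar V^{(3)}_t)^2]=\E[(\bar V^{(2)}_t)^2]+\E[\var(\bar V^{(3)}_t\mid\cals)]$, and the analogous covariance identity which carries the extra factor $\ee^{-\eta h}$ on the correction term, with Proposition~\ref{propapproach2moments}(b) and \eqref{acf-supcog2}--\eqref{eq-crosscor2} then produces the stated right-hand sides up to an algebraic rewriting.

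The main obstacle is precisely this last rewriting: one must verify that the correction term above equals $\int_{\Phi_L}\int_{\Phi_L}\frac{\beta}{\eta}\frac{\var[V_0^\vp]-\cov[V_0^\vp,V_0^\tvp]}{\E[V_0^\vp]}\,\pi(\dd\tvp)\,\pi(\dd\vp)$. Substituting the explicit COGARCH moments \eqref{mean-cog}, \eqref{squaremean-cog} and the cross-moments \eqref{eq-crosscor1}--\eqref{eq-crosscor1b} of Proposition~\ref{prop-corrcogarches}, the equality does \emph{not} hold pointwise in $(\vp,\tvp)$; it holds only after symmetrizing the integrand in $\vp$ and $\tvp$, which is legitimate because the integration is against the symmetric product measure $\pi\otimes\pi$. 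Carrying out this symmetrization carefully (and the parallel bookkeeping that turns the $\ee^{h\Psi(1,\tvp)}$ factor of \eqref{eq-crosscor2} into the $\ee^{h\Psi(1,\vp)}$ of \eqref{acf-supcog3}) is where the real work lies; the probabilistic steps are comparatively clean. Throughout, the hypotheses $\pi(\Phi_L^{(1)})=1$ and $\pi(\Phi_L^{(2)})=1$ supply the finiteness needed to promote the nonnegative Tonelli identities to genuine moment formulas, with the understanding that both sides may be $+\infty$.
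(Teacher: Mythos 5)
Your proposal is correct, and it takes a genuinely different route from the paper's proof. The paper computes directly from the representation \eqref{supcogarch3-explicitstationary}: it splits the covariance into a past term $E_1=\bbe\big[\big(\int_{(-\infty,t]}\ee^{\eta s}\,\dd A_s\big)^2\big]$ and a cross term $E_2$, evaluates these via integration by parts, the quadratic-variation formulas of Lemma \ref{quadvar}, the compensation formula \eqref{expect} and the solution of an integral equation for $g(s,\vp)=\bbe\big[V^\vp_s\int_{(-\infty,s]}\ee^{\eta r}\,\dd A_r\big]$, and then performs a lengthy algebraic reduction (the terms $C(\vp,\tvp)$, $F_1,F_2,F_3$) to reach the stated form. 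You instead condition on $\cals=\sigma(S)$, using the marked-point structure of $\mu^{\La^S}$ with i.i.d.\ marks $\vp_i\sim\pi$ independent of $S$ — legitimate here since the compensator is deterministic and without fixed atoms, so $\mu^{\La^S}$ is a Poisson random measure, and this is exactly the representation the paper itself invokes in the proof of Proposition \ref{tail-supCOG3} — obtain the structurally illuminating identity $\bbe[\bar V^{(3)}_t\mid\cals]=\bar V^{(2)}_t$, and let the law of total (co)variance reduce everything to Proposition \ref{propapproach2moments} plus one correction term computed by a single application of the compensation formula. This buys a shorter, more conceptual argument that recycles Propositions \ref{prop-corrcogarches} and \ref{propapproach2moments} instead of redoing the stochastic calculus, and it explains why the supCOGARCH 3 moments are the supCOGARCH 2 moments plus a nonnegative ``mark-randomness'' term; the price is the final algebra, which you diagnose accurately: the identification holds only after symmetrization. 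For the record, that step does go through. With $a=\Psi(1,\vp)$, $b=\Psi(1,\tvp)$ and $h(\vp,\tvp)$ from \eqref{eq-h-explicit}, the explicit moments of Proposition \ref{prop-corrcogarches} yield
\[
\frac{\var[S_1]}{2}\Big(\vp^2\,\bbe[(V_0^\vp)^2]-\vp\tvp\,\bbe[V_0^\vp V_0^\tvp]\Big)
-\beta\,\frac{\var[V_0^\vp]-\cov[V_0^\vp,V_0^\tvp]}{\bbe[V_0^\vp]}
=\frac{\beta^2\vp\tvp\,\var[S_1]\,(a-b)}{2\,a\,b\,h(\vp,\tvp)},
\]
which is antisymmetric under $\vp\leftrightarrow\tvp$ and hence integrates to zero against $\pi\otimes\pi$; the same symmetry of $\cov[V_0^\vp,V_0^\tvp]$ justifies trading the factor $\ee^{h\Psi(1,\tvp)}$ coming from \eqref{acf-supcog2} and \eqref{eq-crosscor2} for the $\ee^{h\Psi(1,\vp)}$ in \eqref{acf-supcog3}. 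Two points need care in a full write-up: the conditional variance of the infinite sum of conditionally independent terms should be justified via second moments rather than variances, so that all quantities are nonnegative, Tonelli applies, and the identities hold in $[0,\infty]$ as the proposition allows; and the symmetrization must be stated as an identity of integrals, not of integrands.
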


The supCOGARCH 3 process also exhibits Pareto-like tails.

\begin{proposition}\label{tail-supCOG3}
Let $\bar{V}^{(3)}$ be the stationary supCOGARCH 3 process given by \eqref{supcogarch3-explicitstationary}.
Set $\bar \vp:=\inf\{\vp>0\colon \pi((\vp,\infty))=0\}\leq\vp_{\mathrm{max}}<\infty$ and assume that there exists $\bar\kappa>0$ such that \eqref{tailcond} is fulfilled.
Then for $\kappa>0$
\[ \lim_{x\to\infty} x^\kappa \PP[\bar V^{(3)}_0 > x]=\begin{cases} 0 &\text{if } \kappa < \bar\kappa,\\ \infty &\text{if } \kappa>\bar\kappa, \end{cases}\]
and for $\kappa = \bar\kappa$ and $\pi(\{\bar\vp\})=0$ we have
\[ \lim_{x\to\infty} x^{\bar{\kappa}} \PP[\bar V^{(3)}_0 > x]=0, \]
while for $\kappa = \bar\kappa$ and $\pi(\{\bar\vp\})=\bar p>0$
\[ 0<C_\ast := \liminf_{x\to\infty} x^{\bar\kappa}\PP[\bar V^{(3)}_0 > x] \leq \limsup_{x\to\infty} x^{\bar\kappa}\PP[\bar V^{(3)}_0 > x] =: C^\ast <\infty.  \]
\end{proposition}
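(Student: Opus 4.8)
The plan is to sandwich the tail of $\bar{V}^{(3)}_0$ between that of a single COGARCH and a single-big-jump lower bound, the heaviest contribution coming from the right endpoint $\bar\vp$ of $\mathrm{supp}\,\pi$. Write $\kappa(\vp)$ for the unique positive root of $\Psi(\cdot,\vp)=0$, so that $\kappa(\bar\vp)=\bar\kappa$; since $\vp\mapsto\Psi(u,\vp)$ is increasing by \eqref{eq-defpsi}, the map $\vp\mapsto\kappa(\vp)$ is continuous and strictly decreasing. For the \emph{upper bound} I would first show the pathwise domination $\bar{V}^{(3)}_0\le V^{\bar\vp}_0$ a.s. The product representation $V^\vp_t=\beta\int_{(-\infty,t]}\ee^{-\eta(t-s)}\prod_{s<u\le t}(1+\vp\,\Delta S_u)\,\dd s$ shows $\vp\mapsto V^\vp_t$ is nondecreasing for fixed $S$, so $\vp V^\vp_{s-}\le\bar\vp V^{\bar\vp}_{s-}$ jumpwise, and \eqref{supcogarch3-explicitstationary} gives $\bar{V}^{(3)}_0\le\frac\beta\eta+\int_{(-\infty,0]}\ee^{\eta s}\bar\vp V^{\bar\vp}_{s-}\,\dd S_s$. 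Substituting $\bar\vp V^{\bar\vp}_{s-}\,\dd S_s=\dd V^{\bar\vp}_s-\beta\,\dd s+\eta V^{\bar\vp}_s\,\dd s$ from \eqref{cog-sde} and integrating by parts (with $\ee^{\eta s}V^{\bar\vp}_s\to0$ as $s\to-\infty$ by stationarity) collapses the integral to $V^{\bar\vp}_0-\frac\beta\eta$, whence $\bar{V}^{(3)}_0\le V^{\bar\vp}_0$. By \eqref{tailcond} the hypotheses of \cite[Thm.~6]{klm:2006} hold at $\bar\vp$, so $\PP[V^{\bar\vp}_0>x]\sim C_{\bar\vp}x^{-\bar\kappa}$ with $0<C_{\bar\vp}<\infty$; hence $\limsup_{x\to\infty}x^{\bar\kappa}\PP[\bar{V}^{(3)}_0>x]\le C_{\bar\vp}=:C^\ast<\infty$, settling the case $\kappa<\bar\kappa$ and the bound $C^\ast<\infty$.

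For the \emph{lower bound} I would keep only the last jump of $S$ before $0$: with $T<0$ its time and $\vp^\ast$ its mark, nonnegativity of the summands in \eqref{supcogarch3-explicitstationary} gives $\bar{V}^{(3)}_0\ge\ee^{\eta T}\vp^\ast V^{\vp^\ast}_{T-}\Delta S_T$. By the Poisson-mark structure of $\mu^{\La^S}$ (compensator $\dd s\,\nu_S(\dd y)\,\pi(\dd\vp)$), conditionally on $T=t$ the three factors $\vp^\ast$, $\Delta S_t$ and the stationary value $V^{\vp^\ast}_{t-}$ are independent, and $V^\vp_{t-}$ has Pareto index $\kappa(\vp)$. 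A Breiman-type argument (the auxiliary factors $\ee^{\eta T}\le1$ and $\Delta S_T$ carrying the moments supplied by \eqref{tailcond}) then gives, conditionally on $\vp^\ast=\vp$, a tail $\sim c_\vp x^{-\kappa(\vp)}$ with $c_\vp>0$. If $\pi(\{\bar\vp\})=\bar p>0$, taking $\vp=\bar\vp$ yields $\liminf_{x\to\infty}x^{\bar\kappa}\PP[\bar{V}^{(3)}_0>x]\ge\bar p\,c_{\bar\vp}=:C_\ast>0$, proving $0<C_\ast\le C^\ast<\infty$. For any $\kappa>\bar\kappa$, with or without an atom, the set $\{\vp:\kappa(\vp)<\kappa\}$ is a right-neighbourhood of $\bar\vp$ and thus carries positive $\pi$-mass (as $\bar\vp=\sup\mathrm{supp}\,\pi$); integrating the conditional tails over it and applying Fatou gives $x^\kappa\PP[\bar{V}^{(3)}_0>x]\to\infty$.

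It remains to treat $\kappa=\bar\kappa$ with $\pi(\{\bar\vp\})=0$, where the claim is $\lim_{x\to\infty}x^{\bar\kappa}\PP[\bar{V}^{(3)}_0>x]=0$. Fix $\vp_0<\bar\vp$ and split the jumps of $\La^S$ by whether their mark lies in $[0,\vp_0]$ or $(\vp_0,\bar\vp]$, so that $\bar{V}^{(3)}_0=\frac\beta\eta+I_{\le\vp_0}+I_{>\vp_0}$. As in the upper bound, $\frac\beta\eta+I_{\le\vp_0}\le V^{\vp_0}_0$, whose index $\kappa(\vp_0)>\bar\kappa$ forces $x^{\bar\kappa}\PP[\,\cdot\,>x]\to0$; and $I_{>\vp_0}\le\int_{(-\infty,0]}\ee^{\eta s}\bar\vp V^{\bar\vp}_{s-}\,\dd S^{>\vp_0}_s$, where $S^{>\vp_0}$ retains only the $(\vp_0,\bar\vp]$-marked jumps and is a $\pi((\vp_0,\bar\vp])$-thinning of $S$. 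A single-big-jump estimate bounds the tail of this thinned integral by $C'\pi((\vp_0,\bar\vp])\,x^{-\bar\kappa}$. Using $\PP[U+W>x]\le\PP[U>x/2]+\PP[W>x/2]$, letting $x\to\infty$ and then $\vp_0\uparrow\bar\vp$ (so $\pi((\vp_0,\bar\vp])\to\pi(\{\bar\vp\})=0$) gives $\limsup_{x\to\infty}x^{\bar\kappa}\PP[\bar{V}^{(3)}_0>x]=0$.

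The main obstacle is the lower bound, not the domination. One must make rigorous that the tail of the random-measure integral in \eqref{supcogarch3-explicitstationary} is produced by a single jump (a subexponential/regular-variation argument), control the Breiman product of the independent factors exactly at the index $\bar\kappa$, and---crucially for the no-atom case---establish the thinned-integral estimate with constant proportional to $\pi((\vp_0,\bar\vp])$. Care is also needed to guarantee that for $\vp<\bar\vp$ the COGARCH tail is genuinely lighter, i.e.\ of index $\kappa(\vp)>\bar\kappa$, which relies on the moment information in \eqref{tailcond}. Finally, the gap $C_\ast\le C^\ast$ (rather than an exact limit) is intrinsic: the pathwise upper bound $\bar{V}^{(3)}_0\le V^{\bar\vp}_0$ and the one-jump lower bound both see the index $\bar\kappa$ but not the same constant.
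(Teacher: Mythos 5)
Your upper bound coincides with the paper's: the monotone domination $\bar V^{(3)}_0\le V^{\bar\vp}_0$ (termwise $\vp_i V^{\vp_i}_{T_i-}\le\bar\vp V^{\bar\vp}_{T_i-}$) plus \cite[Thm.~6]{klm:2006} settles $\kappa<\bar\kappa$ and $C^\ast<\infty$. Your lower bounds for $\kappa>\bar\kappa$ and for the atom case $\pi(\{\bar\vp\})=\bar p>0$ take a genuinely different route from the paper (which uses a partition/union-bound argument rather than one big jump), and they can be made to work -- in fact more easily than you suggest, since for a \emph{lower} bound no Breiman theorem is needed: Fatou's lemma alone gives $\liminf_{x\to\infty}x^{\kappa(\vp)}\PP[YV^\vp_0>x]\ge C(\vp)\,\bbe[Y^{\kappa(\vp)}]$ for $Y\ge0$ independent of $V^\vp_0$, and \eqref{tailcond} together with $\Psi(\kappa(\vp),\vp)=0$ supplies $\bbe[Y^{\kappa(\vp)}]<\infty$. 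Two technical repairs are needed, though: (i) ``the last jump of $S$ before $0$'' does not exist when $S$ has infinite activity, so you must first thin to jumps exceeding a fixed threshold (a compound Poisson subprocess) before conditioning; (ii) $\{\vp\colon\kappa(\vp)<\kappa\}$ is of the form $(\vp_\kappa,\infty)$ with $\vp_\kappa<\bar\vp$, i.e.\ it contains a \emph{left}-neighbourhood of $\bar\vp$ inside the support (a right-neighbourhood of $\bar\vp$ carries no $\pi$-mass at all); the conclusion that it has positive mass still holds.

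The genuine gap is the case $\kappa=\bar\kappa$, $\pi(\{\bar\vp\})=0$, and you have in effect flagged it yourself. Your argument hinges on the estimate
\[ \limsup_{x\to\infty}\,x^{\bar\kappa}\,\PP\Big[\textstyle\sum_{\vp_i\in(\vp_0,\bar\vp]}\ee^{-\eta T_i}\,\bar\vp\,V^{\bar\vp}_{T_i-}\,\Delta S_{T_i}>x\Big]\;\le\;C'\,\pi\big((\vp_0,\bar\vp]\big), \]
but ``a single-big-jump estimate'' cannot deliver it: the summands are strongly dependent, since every term contains the same stationary process $V^{\bar\vp}$, which is driven by the whole of $S$ -- including the very jumps $\Delta S_{T_i}$ appearing as factors -- so the independence structure behind one-big-jump \emph{upper} bounds (as opposed to lower bounds) is simply not available. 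This is exactly where the paper does its real work. It exploits that the marks $(\vp_i)$ are i.i.d.\ and independent of $S$, so the law of $V(\bar\vp,I):=\sum_{\vp_i\in I}\ee^{-\eta T_i}\bar\vp V^{\bar\vp}_{T_i-}\Delta S_{T_i}$ depends on $I$ only through $p=\pi(I)$. Taking $J=I\cup I'$ with $I,I'$ disjoint of equal mass, equality in law of $V(\bar\vp,I)$ and $V(\bar\vp,I')$ together with superadditivity gives
\[ \PP[V(\bar\vp,J)>x]\;\ge\;2\,\PP[V(\bar\vp,I)>x]-\PP[V(\bar\vp,J)>2x], \]
whence $C^\ast(p)\le\tfrac{1}{2}\big(1+2^{-\bar\kappa}\big)C^\ast(2p)$ and, by induction, $C^\ast(2^{-n})\le\big(\tfrac{1+2^{-\bar\kappa}}{2}\big)^n C^\ast(1)\to0$; a companion partition argument gives $C_\ast(p)>0$. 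Note that this decay is \emph{sublinear} in $p$: your claimed constant proportional to $\pi((\vp_0,\bar\vp])$ is both stronger than what this argument yields and more than is needed. Unless you supply a substitute for this doubling mechanism that copes with the dependence among the summands, your proof of the no-atom case is incomplete.
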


\brem Just like $\bar V^{(1)}$ and $\bar V^{(2)}$, the process $\bar{V}^{(3)}$ is not a Markov process with respect to its augmented natural filtration
(unless in the case $\pi=\delta_\vp$), but, denoting the augmented natural filtration of $((V^\vp_t)_{\vp\in\Phi_L})_{t\geq0}$ by $\FF^{(3)}=(\cF^{(3)}_t)_{t\geq0}$, we obtain for every measurable function
$f\colon\RR_+\to\RR$ and every $t\geq0$
\[\E\Big[ f\big(\bar{V}^{(3)}_t\big)\big| (\cF^{(3)}_s)_{s\leq t}\Big]
= \E\Big[f\big(\bar{V}^{(3)}_t\big)\big| (V_t^{\vp})_{\vp\in\Phi_L} \Big].\]
\end{remark}

\section{The price processes}\label{s5}

Recall that in the COGARCH model, or its discrete-time analogue, the GARCH model (cf. \cite{ben:1995}),
the driving noises for volatility and price processes are the same \eqref{eq-pricecog}.
In this section, we suggest and investigate price processes corresponding to the supCOGARCH volatility processes. All proofs can be found in Section~\ref{s64}.

\subsection{The integrated supCOGARCH 1 price process}\label{s51}

For the supCOGARCH 1 volatility process $\bar V^{(1)}$ as defined in Section \ref{s31}, there is no canonical choice for a price process, since a whole sequence $(L^{\vp_i})_{i\in\bbn}$ of L\'evy processes is used in its definition.
Hence a priori any function of this sequence is a reasonable candidate for the driver in the price process.
As a simple example we take the L\'evy process $L^{\vp_1}$ as integrator; i.e., we define
\beq\label{eq-def-price1}
G^{(1)}_t := \int_{(0,t]} \sqrt{\bar V^{(1)}_{s-}}\,\dd L^{\vp_1}_s,\quad t\geq0.
\eeq
It is an interesting observation that this process not only allows for common jumps of volatility and price (as it is usual in the standard COGARCH model), but also for jumps only in the volatility and not in the price process.
There is evidence that this happens in real data (cf. \cite{Jacod10}).

It is obvious from the definition that, if $(\bar{V}^{(1)}_t)_{t\geq0}$ is strictly stationary, then $(G^{(1)}_t)_{t\geq 0}$ has stationary increments. Furthermore, its second-order structure is comparable to that of the integrated COGARCH process \cite[Prop. 5.1]{KLM:2004}.

\begin{theorem}\label{price-supCOG1}
Let $\bar{V}^{(1)}=\sum_{i=1}^\infty p_i V^{\vp_i}$, $\vp_i\in\Phi_L$, be a stationary supCOGARCH 1 process as defined in Section \ref{s31},
where each $V^{\vp_i}$ is driven by $S^{\vp_i}=[L^{\vp_i},L^{\vp_i}]^\dd$ and $(L^{\vp_i})_{i\in\bbn}$ are i.i.d. copies of a
L\'evy processes $L$ with zero mean. Define the price process $G^{(1)}$ by \eqref{eq-def-price1} and set
\[\Delta^r G^{(1)}_t:= G^{(1)}_{t+r}-G^{(1)}_t= \int_{(t,t+r]} \sqrt{\bar{V}^{(1)}_{s-}} \,\dd L^{\vp_1}_s, \quad t\geq0, r>0.\]
Recall the notation $\Phi_L^{(\kappa)}$ from Eq.~\eqref{eq-def-Phimoment} and that the support of $\pi$ is countable in this case.
\begin{enumerate}
 \item Assume that $\pi$ has support in $\Phi_L^{(1/2)}$. Then
$$\E[\Delta^r G^{(1)}_t]=0, \quad t\geq0, r>0.$$
\item If further $\bbe[L_1^2]<\infty$ and $\pi$ has support in $\Phi_L^{(1)}$, then for $t\in\RR$, $h\geq r>0$
\begin{align*}
 \bbe[(\Delta^r G^{(1)}_t)^2]&= r \bbe[L_1^2] \bbe[\bar{V}^{(1)}_0] = r \bbe[L_1^2] \int_{\Phi_L^{(1)}} \frac{\beta}{\eta-\vp(\bbe[L_1^2]-\sigma_L^2)}\, \pi(\dd \vp) \quad\mbox{and} \\
\cov[\Delta^r G^{(1)}_t,\Delta^r G^{(1)}_{t+h}]&= 0.
\end{align*}
\item Assume further that $\bbe[L_1^4]<\infty$, $\int_{\bbr} y^3\,\nu_L(\dd y)=0$ and that $\pi\neq\delta_0$ has support in $\Phi_L^{(2)}$. Then for $t\in\RR$, $h\geq r>0$
\begin{align*}
\cov[(\Delta^r G^{(1)}_t)^2,(\Delta^r G^{(1)}_{t+h})^2]
&= \bbe[L_1^2]  \int_{\Phi_L^{(2)}} \frac{\ee^{h\Psi(1,\vp)}-\ee^{(h-r)\Psi(1,\vp)}}{\Psi(1,\vp)} \cov[(\Delta^r G^{(1)}_0)^2, V_r^\vp]\,\pi(\dd \vp)\\
&>0.
\end{align*}
\end{enumerate}
\end{theorem}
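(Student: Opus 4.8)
The plan is to treat $G^{(1)}$ as a martingale (up to integrability) with respect to the augmented natural filtration $\GG=(\cG_t)_{t\ge0}$ of the family $(L^{\vp_i})_{i\in\NN}$. Since $L^{\vp_1}$ is a mean-zero L\'evy process, hence a $\GG$-martingale, and $\sqrt{\bar V^{(1)}_{s-}}$ is predictable, $G^{(1)}$ is a local $\GG$-martingale; moreover each $V^{\vp_i}$ is a $\GG$-adapted Markov process whose conditional mean reverts, namely
\[ \E[V^{\vp_i}_{t+u}\mid\cG_t]=\E[V^{\vp_i}_0]+\ee^{u\Psi(1,\vp_i)}\big(V^{\vp_i}_t-\E[V^{\vp_i}_0]\big),\quad u\ge0, \]
which follows from \eqref{cog-sde} by taking conditional expectations and solving the resulting linear ODE (recall $\Psi(1,\vp_i)=\vp_i\E[S_1]-\eta$). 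Parts (a) and (b) are then read off from the (local) martingale property together with the identity $\dd[L^{\vp_1},L^{\vp_1}]_s=\sigma_L^2\,\dd s+\dd S^{\vp_1}_s$ and predictable compensation, while (c) needs the finer analysis below.

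For (a), the assumption $\pi(\Phi_L^{(1/2)})=1$ supplies the integrability needed to upgrade the local martingale $G^{(1)}$ to one with zero-mean increments, whence $\E[\Delta^r G^{(1)}_t]=0$. For (b), under $\E[L_1^2]<\infty$ and $\pi(\Phi_L^{(1)})=1$ the process $G^{(1)}$ is a genuine $L^2$-martingale, so increments over disjoint intervals are orthogonal; for $h\ge r$ the intervals $(t,t+r]$ and $(t+h,t+h+r]$ are disjoint, giving $\cov[\Delta^r G^{(1)}_t,\Delta^r G^{(1)}_{t+h}]=0$. The second moment comes from
\[ \E[(\Delta^r G^{(1)}_t)^2]=\E\big[[G^{(1)},G^{(1)}]_{t+r}-[G^{(1)},G^{(1)}]_t\big]=\E[L_1^2]\int_t^{t+r}\E[\bar V^{(1)}_{s}]\,\dd s=r\,\E[L_1^2]\,\E[\bar V^{(1)}_0], \]
using $[G^{(1)},G^{(1)}]_u=\int_{(0,u]}\bar V^{(1)}_{s-}\,\dd[L^{\vp_1},L^{\vp_1}]_s$, compensation of $\dd[L^{\vp_1},L^{\vp_1}]_s$ by $\E[L_1^2]\,\dd s$, and stationarity; the explicit form follows from \eqref{mean-supcog1} and $\E[S_1]=\E[L_1^2]-\sigma_L^2$.

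For (c) I would set $t=0$ by stationarity and condition on $\cG_h$. As $h\ge r$, the variable $(\Delta^r G^{(1)}_0)^2$ is $\cG_h$-measurable, while $u\mapsto\int_{(h,h+u]}\sqrt{\bar V^{(1)}_{s-}}\,\dd L^{\vp_1}_s$ is a $\cG_h$-martingale started at $0$, so that $\E[(\Delta^r G^{(1)}_h)^2\mid\cG_h]=\E[L_1^2]\int_h^{h+r}\E[\bar V^{(1)}_{s}\mid\cG_h]\,\dd s$. Inserting the mean-reversion formula, using $\int_h^{h+r}\ee^{\Psi(1,\vp_i)(s-h)}\,\dd s=(\ee^{r\Psi(1,\vp_i)}-1)/\Psi(1,\vp_i)$, and taking the covariance with the $\cG_h$-measurable $(\Delta^r G^{(1)}_0)^2$ gives
\[ \cov[(\Delta^r G^{(1)}_0)^2,(\Delta^r G^{(1)}_h)^2]=\E[L_1^2]\sum_{i} p_i\,\frac{\ee^{r\Psi(1,\vp_i)}-1}{\Psi(1,\vp_i)}\,\cov[(\Delta^r G^{(1)}_0)^2,V^{\vp_i}_h]. \]
Conditioning $(\Delta^r G^{(1)}_0)^2\in\cG_r$ on $\cG_r$ and applying mean reversion once more yields $\cov[(\Delta^r G^{(1)}_0)^2,V^{\vp_i}_h]=\ee^{(h-r)\Psi(1,\vp_i)}\cov[(\Delta^r G^{(1)}_0)^2,V^{\vp_i}_r]$; combining exponentials via $(\ee^{r\Psi}-1)\ee^{(h-r)\Psi}/\Psi=(\ee^{h\Psi}-\ee^{(h-r)\Psi})/\Psi$ and rewriting the sum as an integral against $\pi$ produces the claimed identity.

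The main obstacle is strict positivity, for which I would show $\cov[(\Delta^r G^{(1)}_0)^2,V^\vp_r]>0$ for $\vp>0$. Expanding by It\^o,
\[ (\Delta^r G^{(1)}_0)^2=M_r+A,\qquad M_r:=2\int_{(0,r]}\Big(\int_{(0,s)}\!\sqrt{\bar V^{(1)}_{u-}}\,\dd L^{\vp_1}_u\Big)\sqrt{\bar V^{(1)}_{s-}}\,\dd L^{\vp_1}_s,\quad A:=\int_{(0,r]}\bar V^{(1)}_{s-}\,\dd[L^{\vp_1},L^{\vp_1}]_s, \]
with $M$ a mean-zero martingale. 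The crucial point is $\cov[M_r,V^\vp_r]=\E[M_rV^\vp_r]=0$: writing $\phi(r):=\E[M_rV^\vp_r]$, the product rule together with compensation reduces $\phi$ to the linear ODE $\phi'(r)=\Psi(1,\vp)\phi(r)$ with $\phi(0)=0$, provided the covariation $[M,V^\vp]$ contributes nothing. Since the jumps of $M$ and of $V^\vp$ are proportional to $\Delta L^{\vp_1}$ and $(\Delta L^{\vp_1})^2$ respectively, the compensator of $[M,V^\vp]$ is proportional to $\int_\RR y^3\,\nu_L(\dd y)=0$, which is exactly where the vanishing-skewness hypothesis enters. Hence $\cov[(\Delta^r G^{(1)}_0)^2,V^\vp_r]=\cov[A,V^\vp_r]$. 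Finally, $A$ and $V^\vp_r$ are nondecreasing functionals of the independent nonnegative jump measures $(\mu^{L^{\vp_j}})_{j}$ (both the COGARCH volatilities and $[L^{\vp_1},L^{\vp_1}]$ increase with the jump sizes, and $A$ involves neither the Brownian part nor the jump signs of $L^{\vp_1}$), so the Harris/FKG inequality gives $\cov[A,V^\vp_r]\ge0$; for any $\vp>0$ in the support of $\pi$ the dependence of $A$ on $V^\vp$ through the term $\pi(\{\vp\})V^\vp_{s-}$ in $\bar V^{(1)}_{s-}$ makes this covariance strictly positive. As $\pi\ne\delta_0$ charges some $\vp>0$ and the coefficient $(\ee^{h\Psi(1,\vp)}-\ee^{(h-r)\Psi(1,\vp)})/\Psi(1,\vp)$ is strictly positive (because $\Psi(1,\vp)<0$), the integral, and therefore the covariance of the squared increments, is strictly positive.
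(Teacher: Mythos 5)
Your parts (a) and (b), and the derivation of the covariance identity in (c), follow essentially the paper's own proof: zero mean and orthogonality of martingale increments plus compensation of $[L^{\vp_1},L^{\vp_1}]$ for (a)--(b), and for (c) the conditional mean-reversion formula $\E[V^{\vp}_{t+u}\mid\cG_t]=\E[V^\vp_0]+\ee^{u\Psi(1,\vp)}\big(V^\vp_t-\E[V^\vp_0]\big)$; your two-step conditioning (first at $h$, then at $r$) is only cosmetically different from the paper's single conditioning at time $r$. The same holds for your key lemma $\E[M_rV^\vp_r]=0$: the linear ODE $\phi'=\Psi(1,\vp)\phi$, $\phi(0)=0$, and the role of $\int_\bbr y^3\,\nu_L(\dd y)=0$ for $\vp=\vp_1$ (with independence killing $[M,V^\vp]$ when $\vp\neq\vp_1$) is exactly the paper's argument, so up to this point the proof is sound and coincides with the published one.

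The genuine gap is in the strict positivity of $\cov[(\Delta^r G^{(1)}_0)^2,V^\vp_r]=\cov[A,V^\vp_r]$, $A:=\int_{(0,r]}\bar V^{(1)}_{s-}\,\dd[L^{\vp_1},L^{\vp_1}]_s$. Association (Harris/FKG) of the underlying Poisson jump measures -- a tool the paper does not need -- only ever gives $\cov[A,V^\vp_r]\geq 0$; your assertion that ``the dependence of $A$ on $V^\vp$ \dots makes this covariance strictly positive'' is precisely the point requiring proof. Note that for an atom $\vp_i\notin\{0,\vp_1\}$ with $p_i>0$ you do not even need FKG: all cross terms $\cov\big[\int_{(0,r]}V^{\vp_j}_{s-}\dd[L^{\vp_1},L^{\vp_1}]_s,\,V^{\vp_i}_r\big]$ with $j\neq i$ vanish by independence, and conditioning on the path of $V^{\vp_i}$ gives, by \eqref{acf-cog},
\[
\cov[A,V^{\vp_i}_r]=p_i\,\bbe[L_1^2]\,\var[V_0^{\vp_i}]\int_0^r \ee^{(r-s)\Psi(1,\vp_i)}\,\dd s>0.
\]
The problematic case is $\vp=\vp_1$, where $A$ and $V^{\vp_1}_r$ are driven by the \emph{same} noise, so neither independence nor FKG yields strictness. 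This case cannot be dismissed: $\pi=\delta_{\vp_1}$ and $\pi=(1-p)\delta_0+p\,\delta_{\vp_1}$ both satisfy the hypothesis $\pi\neq\delta_0$, and for them the entire positivity claim rests on $\cov[A,V^{\vp_1}_r]>0$. The paper closes exactly this hole by solving the integral equation for $f(r)=\bbe\big[V^\vp_r\int_{(0,r]}\bar V^{(1)}_{s-}\,\dd[L^{\vp_1},L^{\vp_1}]_s\big]$ explicitly, obtaining (Eq.~\eqref{analog})
\[
\cov[(\Delta^r G^{(1)}_0)^2,V^\vp_r]=\frac{\ee^{\Psi(1,\vp)r}-1}{\Psi(1,\vp)}\left(\bbe[L_1^2]\,\pi(\{\vp\})\var[V_0^\vp]+\mathds{1}_{\{\vp=\vp_1\}}\,\vp\int_\bbr y^2\,\nu_S(\dd y)\,\bbe[V_0^\vp\bar V^{(1)}_0]\right),
\]
whose indicator term supplies strict positivity precisely at $\vp=\vp_1$. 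To salvage your route you would need an additional argument, e.g.\ Newman's theorem that associated, uncorrelated random variables are independent, together with a proof that $A$ and $V^{\vp_1}_r$ are not independent; neither is contained in your sketch.
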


\subsection{The integrated supCOGARCH 2 price process}\label{s52}

Let $(L_t)_{t\in\bbr}$ be a two-sided L\'evy process, define the subordinator $S$ by \eqref{eq-def-S2sided}
and let $(\bar{V}^{(2)}_t)_{t\in \RR}$ be the supCOGARCH 2 process driven by $S$ as defined in Section \ref{s32}.
In view of the standard definition of the integrated COGARCH price process \eqref{eq-pricecog}
it makes sense to define the {\sl integrated supCOGARCH 2 price process} by
\begin{equation} \label{eq-def-price2}
 \dd G^{(2)}_t:= \sqrt{\bar{V}^{(2)}_{t-}} \,\dd L_t, \quad G^{(2)}_0=0,\quad t\in \RR.
\end{equation}
Hence, as in the standard COGARCH model, the process $G^{(2)}$ jumps exactly when the volatility $\bar V^{(2)}$ jumps.
Also $(G_t^{(2)})_{t\in\RR}$ has stationary increments if $(\bar{V}^{(2)}_t)_{t\in \RR}$ is strictly stationary. The integrated supCOGARCH 2 process
has the same second-order structure as the integrated supCOGARCH 1 process and, hence, as the integrated COGARCH process as shown in the following.

\begin{theorem} \label{prop-price2}
Suppose that the two-sided L\'evy process $L$ has expectation $0$, define $S$ by \eqref{eq-def-S2sided}, the supCOGARCH volatility $\bar{V}^{(2)}$ as in Section \ref{s32}
with $\pi(\Phi_L)=1$ and the process $G^{(2)}$ by \eqref{eq-def-price2}. 
Set
$$\Delta^r G^{(2)}_t:= G^{(2)}_{t+r}-G^{(2)}_t= \int_{(t,t+r]} \sqrt{\bar{V}^{(2)}_{s-}} \,\dd L_s, \quad t\in\RR, r>0.$$
\begin{enumerate}
 \item Assume that $\pi$ has support in $\Phi_L^{(1/2)}$. Then
$$\E[\Delta^r G^{(2)}_t]=0, \quad t\in\RR, r>0.$$
\item If further $\bbe[L_1^2]<\infty$ and $\pi$ has support in $\Phi_L^{(1)}$, then for $t\in\RR$, $h\geq r>0$
\begin{align*}
 \bbe[(\Delta^r G^{(2)}_t)^2]=&~ r \bbe[L_1^2] \bbe[\bar{V}^{(2)}_0] = r \bbe[L_1^2] \int_{\Phi_L^{(1)}} \frac{\beta}{\eta-\vp(\bbe[L_1^2]-\sigma_L^2)}\, \pi(\dd \vp) , \\
\cov[\Delta^r G^{(2)}_t,\Delta^r G^{(2)}_{t+h}]=&~ 0.
\end{align*}
\item Assume further that $\bbe[L_1^4]<\infty$, $\int_{\bbr} y^3\,\nu_L(\dd y)=0$ and $\pi\neq\delta_0$ has support in $\Phi_L^{(2)}$. Then for $t\in\RR$, $h\geq r>0$
    \hspace*{-1cm}
\begin{align*}
\cov[(\Delta^r G^{(2)}_t)^2,(\Delta^r G^{(2)}_{t+h})^2]=&~ \bbe[L_1^2]  \int_{\Phi_L^{(2)}} \frac{\ee^{h\Psi(1,\vp)}-\ee^{(h-r)\Psi(1,\vp)}}{\Psi(1,\vp)} \cov[(\Delta^r G^{(2)}_0)^2, V_r^\vp] \,\pi(\dd \vp)\\
>&~0.
\end{align*}
\end{enumerate}
\end{theorem}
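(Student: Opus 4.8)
The plan is to prove the three parts of Theorem~\ref{prop-price2} in parallel with the proof of Theorem~\ref{price-supCOG1}, since the two integrated price processes share the same second-order structure; the only essential difference is that $\bar V^{(2)}$ is driven by a single two-sided L\'evy process $L$ rather than the independent family $(L^{\vp_i})$. The fundamental tool throughout will be the It\^o isometry for stochastic integrals against $L$, together with the key structural fact that for each fixed $t$ the integrand $\sqrt{\bar V^{(2)}_{t-}}$ is $\cF_{t-}$-measurable and hence predictable, whereas the increment $\dd L_t$ is centered and has independent increments. This independence-of-the-past is what makes the cross terms vanish and produces the uncorrelatedness in parts (a) and (b).

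For part (a), I would use that $\E[L_1]=0$ and the predictability of $\sqrt{\bar V^{(2)}_{s-}}$ to conclude, via \eqref{expect}-type compensation or a direct martingale argument, that $\E[\Delta^r G^{(2)}_t]=0$ once $\bar V^{(2)}_0$ has a finite $1/2$-moment, which is guaranteed by $\pi(\Phi_L^{(1/2)})=1$. For part (b), I would apply the It\^o isometry to write $\E[(\Delta^r G^{(2)}_t)^2]=\E[L_1^2]\,\E[\int_{(t,t+r]}\bar V^{(2)}_{s-}\,\dd s]$, then invoke stationarity of $\bar V^{(2)}$ and Fubini to pull the expectation inside, giving $r\,\E[L_1^2]\,\E[\bar V^{(2)}_0]$; the explicit value then follows from \eqref{mean-supcog2} after rewriting $\E[S_1]=\E[L_1^2]-\sigma_L^2$, which holds because $S=[L,L]^\dd$ so $\E[S_1]=\int_\bbr y^2\,\nu_L(\dd y)=\var[L_1]-\sigma_L^2=\E[L_1^2]-\sigma_L^2$ under the zero-mean assumption. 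For the covariance of disjoint increments in (b), I would condition on $\cF_t$ (or $\cF_{t+h-}$, with $h\ge r$ ensuring disjointness) and use that the later increment $\Delta^r G^{(2)}_{t+h}$ has conditional mean zero while $\Delta^r G^{(2)}_t$ is measurable; this forces the covariance to be $0$.

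The substantive work is in part (c), the autocovariance of the squared increments. Here I would expand $(\Delta^r G^{(2)}_t)^2$ using the quadratic variation and It\^o's formula, isolating the leading term involving $\int_{(t,t+r]}\bar V^{(2)}_{s-}\,\dd[L,L]_s$ and controlling the martingale remainder. The assumption $\int_\bbr y^3\,\nu_L(\dd y)=0$ (a symmetry-of-third-moment condition) is precisely what kills the odd cross terms that would otherwise obstruct a clean formula, and $\E[L_1^4]<\infty$ together with $\pi(\Phi_L^{(2)})=1$ secures finiteness of all fourth-order quantities. After centering, the covariance reduces to $\E[L_1^2]$ times a covariance between the past squared increment and the future volatility, which by the exponential decay of the COGARCH autocovariance \eqref{acf-cog} and the cross-covariance \eqref{eq-crosscor2} integrates to the displayed expression with the factor $(\ee^{h\Psi(1,\vp)}-\ee^{(h-r)\Psi(1,\vp)})/\Psi(1,\vp)$ arising from integrating $\ee^{u\Psi(1,\vp)}$ over $u\in(h-r,h]$.

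The main obstacle will be part (c): carefully handling the stochastic-integral algebra so that the fourth-moment expansion collapses to the single covariance term $\cov[(\Delta^r G^{(2)}_0)^2, V_r^\vp]$, and verifying that each discarded term genuinely vanishes under the stated moment and symmetry hypotheses. In particular, one must track the dependence between $\bar V^{(2)}_{s-}$ and the jumps of $L$ occurring in $[t,t+r]$, since unlike supCOGARCH~1 all components jump simultaneously with $L$; this coupling means the integrand and integrator are \emph{not} independent, so the computation must proceed through the predictable compensator \eqref{expect} rather than a naive independence argument. The strict positivity of the covariance then follows because each factor in the integrand is nonnegative — the exponential difference is positive since $\Psi(1,\vp)<0$, and the cross-covariance is nonnegative by Proposition~\ref{prop-corrcogarches} — with the integral being strictly positive as soon as $\pi\neq\delta_0$.
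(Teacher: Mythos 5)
Your plan is, in outline, the paper's own: the paper proves Theorem \ref{prop-price2} by rerunning the proof of Theorem \ref{price-supCOG1} with the augmented natural filtration of $L$ replacing that of $(L^{\vp_i})_{i\in\bbn}$, and you correctly isolate the one structural change this forces --- every argument that used independence of the single COGARCH processes must be rerouted through the predictable compensator, because for $\bar V^{(2)}$ all components $V^\vp$ jump exactly when $L$ does. Your parts (a) and (b), and your conditioning argument producing the kernel $(\ee^{h\Psi(1,\vp)}-\ee^{(h-r)\Psi(1,\vp)})/\Psi(1,\vp)$ in (c), are correct and agree in substance with the paper's computations.

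The gap is the positivity claim at the end of (c). You assert that the factor $\cov[(\Delta^r G^{(2)}_0)^2, V_r^\vp]$ in the integrand is ``nonnegative by Proposition \ref{prop-corrcogarches}''. That proposition controls only covariances $\cov[V_t^\vp,V_{t+h}^\tvp]$ between volatility processes; it says nothing about the covariance of a squared price increment with a future volatility, whose sign is not a priori clear and whose determination is the longest part of the proof of Theorem \ref{price-supCOG1}. What is actually needed is the decomposition $(\Delta^r G^{(2)}_0)^2 = 2M_r + \int_{(0,r]} \bar V^{(2)}_{s-}\,\dd[L,L]_s$, the proof that $\bbe[M_r V^\vp_r]=0$ --- and here, unlike in \eqref{eq-price-cases}, no independence case is available for \emph{any} $\vp$: since $[L,S]_t=\sum_{0<s\leq t}(\Delta L_s)^3$ and every $V^\vp$ is driven by the same $S$, the hypothesis $\int_\bbr y^3\,\nu_L(\dd y)=0$ must annihilate $\bbe\big[[V^\vp,M]_r\big]$ for every $\vp$ in the support of $\pi$ --- and finally the derivation and solution of the integral equation for $f(r)=\bbe\big[V^\vp_r\int_{(0,r]}\bar V^{(2)}_{s-}\,\dd[L,L]_s\big]$, whose solution gives the analogue of \eqref{analog}:
\[
\cov[(\Delta^r G^{(2)}_0)^2,V^\vp_r]
=\frac{\ee^{\Psi(1,\vp)r}-1}{\Psi(1,\vp)}\left(\bbe[L_1^2]\cov[V_0^\vp,\bar V_0^{(2)}]
+\vp\int_\bbr y^2\,\nu_S(\dd y)\,\bbe[V_0^\vp\bar V^{(2)}_0]\right),
\]
with the indicator $\mathds{1}_{\{\vp=\vp_1\}}$ of the supCOGARCH 1 case replaced by $1$. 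Only at this point does Proposition \ref{prop-corrcogarches} enter, through $\cov[V_0^\vp,\bar V^{(2)}_0]=\int_{\Phi_L^{(2)}}\cov[V_0^\vp,V_0^\tvp]\,\pi(\dd\tvp)\geq0$; strict positivity of the displayed covariance for $\vp>0$ (hence of the integral in (c), since $\pi\neq\delta_0$ puts mass on $\vp>0$) comes from this together with the strictly positive second summand. Without this reduction, the final ``$>0$'' --- the substantive assertion of part (c) --- is unsupported.
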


\subsection{The integrated supCOGARCH 3 price process}\label{s53}

As in the case of the supCOGARCH 2 there is a canonical choice for the driving noise in the price process of the supCOGARCH 3.
With $L$ being a L\'evy process and $V^{(3)}$ the stationary supCOGARCH 3 as defined in \eqref{supcogarch3-explicitstationary},
we define the {\sl integrated supCOGARCH 3 price process} by
\begin{equation} \label{eq-def-price3}
 G^{(3)}_t:= \int_{(0,t]} \sqrt{\bar{V}^{(3)}_{t-}} \,\dd L_t, \quad t\geq0.
\end{equation}
Evidently, $G^{(3)}$ has stationary increments and, if $\pi(\{0\})=0$, it jumps at exactly the times when $\bar V^{(3)}$ jumps. However, whenever $\pi(\{0\})>0$, the supCOGARCH 3 model features price jumps without volatility jumps, a behaviour attested by the empirical findings of \cite{Jacod10}. The second-order structure of $G^{(3)}$ is calculated in the following theorem.

\begin{theorem} \label{prop-price3}
Suppose that $L$ is a L\'evy process with expectation $0$ and that $\pi(\Phi_L)=1$. Define $V^{(3)}$ by \eqref{supcogarch3-explicitstationary} and set
\[\Delta^r G^{(3)}_t:= G^{(3)}_{t+r}-G^{(3)}_t= \int_{(t,t+r]} \sqrt{\bar{V}^{(3)}_{s-}} \,\dd L_s, \quad t\geq0, r>0.\]
\begin{enumerate}
 \item Assume that $\pi$ has support in $\Phi_L^{(1/2)}$. Then
$$\E[\Delta^r G^{(3)}_t]=0, \quad t\geq0, r>0.$$
\item If further $\bbe[L_1^2]<\infty$ and $\pi$ has support in $\Phi_L^{(1)}$, then for $t\geq0$ and $h\geq r>0$
\begin{align*}
 \bbe[(\Delta^r G^{(3)}_t)^2]
= &~r \bbe[L_1^2] \bbe[\bar{V}^{(3)}_0] = r \bbe[L_1^2] \int_{\Phi_L^{(1)}} \frac{\beta}{\eta-\vp(\bbe[L_1^2]-\sigma_L^2)}\, \pi(\dd \vp) , \\
\cov[\Delta^r G^{(3)}_t,\Delta^r G^{(3)}_{t+h}]= &~0.
\end{align*}
\item Assume further that $\bbe[L_1^4]<\infty$, $\int_{\bbr} y^3\,\nu_L(\dd y)=0$ and $\pi\neq\delta_0$ has support in $\Phi_L^{(2)}$. Then for $t\geq0$ and $h\geq r>0$
\begin{align*}
&~\cov[(\Delta^r G^{(3)}_t)^2,(\Delta^r G^{(3)}_{t+h})^2]\\
=&~\bbe[L_1^2] \left[ \frac{\ee^{-\eta(h-r)}-\ee^{-\eta h}}{\eta} \cov[(\Delta^r G^{(3)}_0)^2, \bar V^{(3)}_r] \right.\\
&\quad\quad\quad+\left. \int_{\Phi_L^{(2)}} \left(\frac{\ee^{h\Psi(1,\vp)}-\ee^{(h-r)\Psi(1,\vp)}}{\Psi(1,\vp)}+\frac{\ee^{-\eta h}-\ee^{-\eta(h-r)}}{\eta}\right) \cov[(\Delta^r G^{(3)}_0)^2, V_r^\vp] \,\pi(\dd \vp)\right]\\
>&~0.
\end{align*}
\end{enumerate}
\end{theorem}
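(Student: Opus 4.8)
The plan is to exploit that $G^{(3)}$ is a martingale and to reduce every moment computation to the second-order structure of $\bar V^{(3)}$ established in Proposition~\ref{secord-supcog3} together with the conditional mean dynamics of the single COGARCH processes. Throughout I work with the augmented natural filtration $\cF^{(3)}$ of $\La^L$, with respect to which $\sqrt{\bar V^{(3)}_{s-}}$ is predictable and $L$ is a mean-zero L\'evy martingale. For (a), under $\pi(\Phi_L^{(1/2)})=1$ the integrand $\sqrt{\bar V^{(3)}_{s-}}$ lies in $L^1$, so $\Delta^r G^{(3)}_t$ is a martingale increment and has zero mean. For (b) the key identity is $[G^{(3)},G^{(3)}]_t=\int_{(0,t]}\bar V^{(3)}_{s-}\,\dd[L,L]_s$ with $\dd[L,L]_s=\sigma_L^2\,\dd s+\dd S_s$; under $\E[L_1^2]<\infty$ and $\pi(\Phi_L^{(1)})=1$ the process $G^{(3)}$ is an $L^2$-martingale, so $\E[(\Delta^r G^{(3)}_t)^2]=\E[[G^{(3)},G^{(3)}]_{t+r}-[G^{(3)},G^{(3)}]_t]$. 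The continuous part contributes $\sigma_L^2 r\,\E[\bar V^{(3)}_0]$ by stationarity and Fubini, while for the jump part the compensation formula \eqref{expect} with compensator $\E[S_1]\,\dd s$ of $S$ gives $\E[S_1]\,r\,\E[\bar V^{(3)}_0]$; adding these and using $\E[L_1^2]=\sigma_L^2+\E[S_1]$ together with \eqref{mean-supcog3} yields the stated expression. The vanishing covariance for $h\ge r$ is orthogonality of the martingale increments over the disjoint intervals $(t,t+r]$ and $(t+h,t+h+r]$.

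Part (c) is the substantial part. By stationarity I reduce to $\cov[(\Delta^r G^{(3)}_0)^2,(\Delta^r G^{(3)}_h)^2]$ and observe that $(\Delta^r G^{(3)}_0)^2$ is $\cF^{(3)}_r$-measurable, hence $\cF^{(3)}_h$-measurable since $h\ge r$. Conditioning on $\cF^{(3)}_h$ and using the quadratic-variation identity from (b),
\[ \E\big[(\Delta^r G^{(3)}_h)^2\,\big|\,\cF^{(3)}_h\big]=\E[L_1^2]\int_h^{h+r}\E\big[\bar V^{(3)}_s\,\big|\,\cF^{(3)}_h\big]\,\dd s, \]
so the covariance equals $\E[L_1^2]\int_h^{h+r}\cov[(\Delta^r G^{(3)}_0)^2,\E[\bar V^{(3)}_s\mid\cF^{(3)}_h]]\,\dd s$. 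The heart of the matter is therefore $\E[\bar V^{(3)}_s\mid\cF^{(3)}_h]$. Unlike $\bar V^{(2)}$, the process $\bar V^{(3)}$ is not the $\pi$-mixture of the $V^\vp$, so I solve the linear ODE obtained from \eqref{eq-def-approach3},
\[ \frac{\dd}{\dd s}\E[\bar V^{(3)}_s\mid\cF^{(3)}_h]=\beta-\eta\,\E[\bar V^{(3)}_s\mid\cF^{(3)}_h]+\E[S_1]\int_{\Phi_L}\vp\,\E[V^\vp_s\mid\cF^{(3)}_h]\,\pi(\dd\vp), \]
into which I insert the COGARCH conditional mean $\E[V^\vp_s\mid\cF^{(3)}_h]=\E[V^\vp_0]+\ee^{\Psi(1,\vp)(s-h)}(V^\vp_h-\E[V^\vp_0])$, which follows from the Markov property and first-moment dynamics of the single COGARCH (cf.~\eqref{acf-cog}). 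This represents $\E[\bar V^{(3)}_s\mid\cF^{(3)}_h]$ as an explicit affine combination of $\bar V^{(3)}_h$ and $(V^\vp_h)_{\vp\in\Phi_L}$ with weights $\ee^{-\eta(s-h)}$ and $\ee^{\Psi(1,\vp)(s-h)}$, and it is precisely this coupling that generates both the $\ee^{h\Psi(1,\vp)}$ and the additional $\ee^{-\eta h}$ terms in the claimed formula.

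It then remains to take covariances with $(\Delta^r G^{(3)}_0)^2$, integrate over $s\in(h,h+r]$, and prove positivity. The constant terms in the conditional mean drop under the covariance, leaving $\cov[(\Delta^r G^{(3)}_0)^2,\bar V^{(3)}_h]$ and $\cov[(\Delta^r G^{(3)}_0)^2,V^\vp_h]$; applying mean reversion once more between times $r$ and $h$ (again using $\cF^{(3)}_r$-measurability of $(\Delta^r G^{(3)}_0)^2$) converts these into $\ee^{-\eta(h-r)}\cov[(\Delta^r G^{(3)}_0)^2,\bar V^{(3)}_r]$ and $\ee^{\Psi(1,\vp)(h-r)}\cov[(\Delta^r G^{(3)}_0)^2,V^\vp_r]$, and the $s$-integration produces exactly the kernels in the statement. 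The hypothesis $\int_\bbr y^3\,\nu_L(\dd y)=0$ is used to annihilate the odd cross-moments of $L$ so that these covariances reduce to the stated ones, while $\E[L_1^4]<\infty$ and support in $\Phi_L^{(2)}$ secure finiteness. For strict positivity I argue that $\cov[(\Delta^r G^{(3)}_0)^2,V^\vp_r]$ and $\cov[(\Delta^r G^{(3)}_0)^2,\bar V^{(3)}_r]$ are nonnegative, since a larger accumulation of squared jumps of $S$ on $(0,r]$ simultaneously inflates the squared return and the volatility at time $r$ (positive association, as in \cite[Prop.~5.1]{KLM:2004}), and that the exponential kernels are positive because $\Psi(1,\vp)<0<\eta$; the assumption $\pi\neq\delta_0$ makes the bound strict.

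The main obstacle I expect is the computation of $\E[\bar V^{(3)}_s\mid\cF^{(3)}_h]$. Because $\bar V^{(3)}$ is built from the L\'evy basis with a freshly sampled $\vp$ at every jump rather than from a fixed mixture of the $V^\vp$, its conditional mean couples to the entire family $(V^\vp)_{\vp\in\Phi_L}$ through the forcing term $\E[S_1]\int_{\Phi_L}\vp\,\E[V^\vp_s\mid\cF^{(3)}_h]\,\pi(\dd\vp)$. It is this coupled ODE, absent from the supCOGARCH~1 and~2 computations where the covariance splits directly into cross-covariances of individual COGARCH processes via Proposition~\ref{prop-corrcogarches}, that yields the extra $\ee^{-\eta h}$ contribution and demands the most care.
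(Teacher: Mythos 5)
Your treatment of (a), (b) and the derivation of the covariance formula in (c) is sound and essentially the paper's own argument: the paper also reduces everything to the conditional mean of $\bar V^{(3)}$, computed there from the stationary representation \eqref{supcogarch3-explicitstationary} together with the COGARCH relation $\E[V^\vp_s\mid\cG^{(3)}_r]=(V^\vp_r-\E[V^\vp_0])\ee^{(s-r)\Psi(1,\vp)}+\E[V^\vp_0]$, conditioning once at time $r$ rather than first at $h$ and then at $r$; by the tower property your two-step version yields the identical affine combination $\ee^{-\eta(s-r)}\bigl(\bar V^{(3)}_r-\int V^\vp_r\,\pi(\dd\vp)\bigr)+\int\ee^{\Psi(1,\vp)(s-r)}(V^\vp_r-\E[V^\vp_0])\,\pi(\dd\vp)+\E[\bar V^{(3)}_0]$, and hence the same kernels after integrating over $s\in(h,h+r]$.

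The genuine gap is the positivity claim. First, the kernels are not all positive, and your justification ``$\Psi(1,\vp)<0<\eta$'' does not suffice: the kernel multiplying $\cov[(\Delta^r G^{(3)}_0)^2,V^\vp_r]$ equals $K(\Psi(1,\vp))-K(-\eta)$ with $K(x)=(\ee^{hx}-\ee^{(h-r)x})/x$, i.e.\ the sum of a positive and a \emph{negative} term, and its nonnegativity requires exactly the two facts the paper invokes: $K$ is increasing on $(-\infty,0)$ and $\Psi(1,\vp)=\vp\E[S_1]-\eta>-\eta$ (knowing only $\Psi(1,\vp)<0$ and $\eta>0$ gives no comparison between $\Psi(1,\vp)$ and $-\eta$). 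Second, and more seriously, the nonnegativity of $\cov[(\Delta^r G^{(3)}_0)^2,V^\vp_r]$ and $\cov[(\Delta^r G^{(3)}_0)^2,\bar V^{(3)}_r]$ cannot be dispatched by an appeal to positive association: $(\Delta^r G^{(3)}_0)^2$ contains the martingale cross term $2\int_{(0,r]}G^{(3)}_{s-}\sqrt{\bar V^{(3)}_{s-}}\,\dd L_s$, which is not a monotone functional of the jumps of $S$, so no FKG-type argument applies off the shelf. This is precisely where the paper does its hardest work: it uses $\int_\bbr y^3\,\nu_L(\dd y)=0$ to show that this cross term is uncorrelated with $V^\vp_r$, derives and solves integral equations for $f(\vp,r)=\E[V^\vp_r\int_{(0,r]}\bar V^{(3)}_{u-}\,\dd[L,L]_u]$ and for its analogue with $\bar V^{(3)}_r$, and reduces the question to $\cov[V^\vp_0,\bar V^{(3)}_0]>0$ --- which itself needs a separate stationarity/integration-by-parts computation involving $h(\vp,\tvp)$ --- plus, for the $\bar V^{(3)}$ term, Jensen's inequality and the elementary bound $(\ee^{\eta r}-1)/\eta>r$. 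Without these computations your argument for (c) stops at the covariance formula and does not establish the strict inequality.
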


\section{Comparison and conclusions}\label{s4}

This section is devoted to highlight the analogies and differences between the three supCOGARCH processes, and to compare them to the standard COGARCH process. 
First note that in all three models, setting $\pi=\delta_\vp$ for $\vp\in\Phi_L$ yields the standard COGARCH process $(V_t^\vp)_{t\geq 0}$ as defined in \eqref{cog}. Hence it seems natural that some features of the COGARCH process are preserved under superpositioning. The next remark summarizes the most important properties.

\brem
(a) \, Comparing the autocovariance functions of the supCOGARCH volatility processes (cf. \eqref{acf-supcog1}, \eqref{acf-supcog2} and \eqref{acf-supcog3}) to those of the COGARCH (cf. \eqref{acf-cog}), we find for large lags $h$ exponential decay in all three supCOGARCH models, but allowing for more flexibility than in the COGARCH model for small and medium lags. \\[2mm]
(b) \, The important property of Pareto-like tails of the stationary distribution of a COGARCH process \cite[Thm.~6]{klm:2006} persists as shown in Propositions~\ref{tail-supCOG1}, \ref{prop-tails-sup2} and~\ref{tail-supCOG3}.\\[2mm]
(c) \, Another similarity is given in the behaviour between jumps, where the COGARCH process exhibits exponential decay \cite[Prop. 2]{klm:2006}.
More precisely, assuming that $\bar V^{(1)}$, $\bar V^{(2)}$ and $\bar V^{(3)}$ only have finitely many jumps on compact intervals, and fixing two consecutive jump times $T_j<T_{j+1}$, we obtain for $i\in\{1,2,3\}$ and $t\in(T_j,T_{j+1})$
\[ \frac{\dd}{\dd t}\bar V^{(i)}_t = \beta - \eta \bar V^{(i)}_t,\quad \bar V^{(i)}_t = \frac{\beta}{\eta} + \left(\bar V^{(i)}_{T_j}-\frac{\beta}{\eta}\right)\ee^{-\eta(t-T_j)}. \]
(d) \, An important difference between the supCOGARCH processes and the COGARCH process is the jump behaviour. This is highlighted in Corollary \ref{jointjumps} and Example \ref{example-vergleich0}. \\[2mm]
(e) \, In general, all supCOGARCH models have common jumps in volatility and price as it is characteristic for the COGARCH model. Additionally, the supCOGARCH 1 model also features volatility jumps without price jumps and the supCOGARCH 3, if $\pi(\{0\})>0$, also price jumps without volatility jumps. Moreover, if we replace $L^{\vp_1}$ in \eqref{eq-def-price1} by a (finite or infinite) linear combination of $(L^{\vp_i})_{i\in\bbn}$, we can control the proportion of common volatility and price jumps to sole volatility jumps in the supCOGARCH 1 model. \\[2mm]
(f) \, Our three models have different degrees of randomness in the following sense. 
The supCOGARCH 1 is defined via a sequence of independent L\'evy processes. So by the adjustment of $\pi$ there is an arbitrary degree of randomness in the model.
The supCOGARCH 2 model has only one single source of randomness, namely the driving L\'evy process. 
Finally, the supCOGARCH 3 incorporates two sources of randomness: one originating from the L\'evy process $L=\La^L((0,\cdot]\times\Phi_L)$ and one from the sequence $(\vp_i)_{i\in\bbn}$ chosen at the jump times of $L$.  
\erem

One of the motivations for this study was the observation made in \cite{JKMc} that for a COGARCH process $(V^\vp,G^\vp)$ there is always a deterministic relationship between volatility jumps and price jumps given by
\[ q^{\vp}_T :=\frac{\phi(V^\vp_{T-},V^\vp_{T})}{\psi(G^\vp_{T-},G^\vp_{T})} \equiv \vp \]
for deterministic functions 
\beq\label{eq-jumprelations} 
\psi(x,y) = (y-x)^2,\quad \phi(x,y)=y-x.  
\eeq
and every jump time $T$ of the driving L\'evy process. 

From the following corollary, which is a direct consequence of the respective definitions, we see immediately that for all three supCOGARCH models such a deterministic functional relationship between volatility and price jumps is no longer present.

\bco\label{jointjumps} Let $T$ be a jump time of $L^{\vp_1}$ for the supCOGARCH~1, and a jump time of $L$ for the supCOGARCH 2 and 3. Furthermore, define $\bar \vp:=\inf\{\vp>0\colon \pi((\vp,\infty))=0\}$ and $\underline{\vp}:=\sup\{\vp>0\colon \pi((0,\vp))=0\}$ (using the convention $\sup \emptyset := 0$, $\inf \emptyset:=\infty$).
\ben 
\item We have
\begin{align}
\Delta \bar V^{(1)}_T &= p_1 \vp_1 V^{\vp_1}_{T-} (\Delta L_T^{\vp_1})^2,&\quad \Delta G^{(1)}_T &= \sqrt{\sum_{i=1}^\infty p_i V^{\vp_i}_{T-}} \Delta L^{\vp_1}_T\label{rel1}\\
\Delta \bar V^{(2)}_T &= \int_{\Phi_L} \vp V^\vp_{T-}\,\pi(\dd\vp) (\Delta L_T)^2,&\quad \Delta G^{(2)}_T &= \sqrt{\int_{\Phi_L} V^\vp_{T-} \,\pi(\dd\vp)} \Delta L_T\label{rel2}\\
\Delta \bar V^{(3)}_T &= \vp_T V^{\vp_T}_{T-}(\Delta L_T)^2,&\quad \Delta G^{(3)}_T &= \sqrt{\bar V^{(3)}_{T-}}\Delta L_T,\label{rel3}
\end{align}
where in the last line $\vp_T$ is a random variable which has distribution $\pi$ and is independent of $L$.
\item Define
\beq\label{quot} q^{(i)}_T := \frac{\phi(\bar V^{(i)}_{T-},\bar V^{(i)}_{T})}{\psi(G^{(i)}_{T-},G^{(i)}_{T})} \eeq
for $i=1,2,3$ with $\phi$ and $\psi$ given in \eqref{eq-jumprelations}. Then we have
\[ q^{(1)}_T \leq \bar\vp \quad\text{and}\quad \underline\vp \leq q^{(2)}_T \leq \bar\vp; \]
moreover, if $\vp_T = \bar\vp$ (resp. $\vp_T=\underline\vp$), we have 
\[ q^{(3)}_j\geq \bar\vp \quad (\text{resp. } q^{(3)}_j\leq\underline\vp). \]
\een
\eco

\bexam[Ex. \ref{example-vergleich}, \ref{example-vergleich-2} and \ref{example-vergleich-3} continued]\label{example-vergleich0}
Let us compare the jumps in the supCOGARCH volatility processes for $\pi=p_1\delta_{\vp_1}+ p_2\delta_{\vp_2}$ with $p_1+p_2=1$ and $\vp_1,\vp_2\in\Phi_L$: 
We see from \eqref{rel1} that in the supCOGARCH 1 model a squared jump of $L^{\vp_i}$ is always scaled with $p_i \vp_i V_{t-}^{\vp_i}$ and, hence, the parameter $\vp_i$ as well as the weight $p_i$ take part in the scaling. In contrast, defining $S^{\vp_i}=\La^S((0,\cdot]\times\{\vp_i\})$ for $i=1,2$ in the case of the supCOGARCH 3 process, each jump of $S=S^{\vp_1}+S^{\vp_2}=[L,L]^\dd$ is scaled with $\vp_1 V_{t-}^{\vp_1}$ or $\vp_2 V_{t-}^{\vp_2}$, depending on whether $S^1$ or $S^2$ actually jumps. 
Here the probabilities $p_i$ do not influence the scaling of the jump, but the intensity of the driving processes $S^{\vp_i}$, in other words, the $p_i$ determine the probability for the value $\vp_i$ to be chosen at a specific jump time. 
Finally, for the supCOGARCH 2 process, the jump size of the subordinator $S=[L,L]^\dd$ is always scaled with $p_1\vp_1 V^{\vp_1}_{t-} + p_2\vp_2 V^{\vp_2}_{t-}$, so all weights and parameters are involved here.
\eexam

\subsubsection*{Simulation results}
To illustrate the theoretical findings above, we present simulations of the different supCOGARCH volatility processes as well as the different price processes in Figures~\ref{figvol} and \ref{figprice} below.
As L\'evy process $L$ we choose a variance gamma process arising through time changing a standard Brownian motion by an independent gamma process 
with mean and variance $1$. 

Note that we have chosen different parameters for the simulations presented in Figures \ref{figvol} and \ref{figprice}, respectively, in order to better visualize the differences between the three volatility and the three price processes.

To illustrate the profound difference between the COGARCH and the three supCOGARCH models with reference to \eqref{eq-jumprelations}, we also compute $q^{(1)}, q^{(2)}$ and $q^{(3)}$ as defined in \eqref{quot} for the jump times of the simulation in Figure~\ref{figprice}.
The histograms of $\log q^{(i)}$ are given in Figure~\ref{fighist}. We see that both the supCOGARCH~1 and 2 exhibit a certain interval of values for $\log q^{(1)}$ and $\log q^{(2)}$. As we would expect from Corollary \ref{jointjumps}, both $\log q^{(1)}$ and $\log q^{(2)}$ are bounded from above by $\log \vp_2$, but only $\log q^{(2)}$ is bounded from below by $\log \vp_1$ whereas the $\log q^{(1)}$ has a relatively long tail on the negative side. Also, in general, the values of $q^{(1)}$ tend to be smaller than those of $q^{(2)}$. This is due to the fact that at a common jump time of volatility and price, the volatility jump size is the sum of two terms for the supCOGARCH 2 but only a single term for the supCOGARCH~1 (see \eqref{rel2} and \eqref{rel1}).
As a result, the nominator in \eqref{quot} is usually smaller for the supCOGARCH 1 than for the supCOGARCH 2. Finally, again in coincidence with Corollary~\ref{jointjumps}, the supCOGARCH~3 shows two disjoint intervals for the values of $q^{(3)}$, corresponding to the two different values of $\vp$ chosen for the superposition.

\setlength{\unitlength}{0.8cm}
\begin{figure}[!ht]
\begin{picture}(0,4)
\put(0,0){\includegraphics{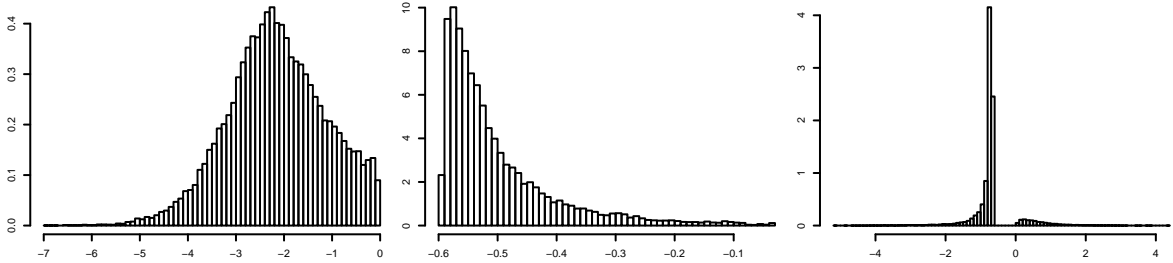}}
\end{picture}
\caption{\scriptsize The pictures (from left to right) show the histograms for $\log(q^{(1)})$, $\log(q^{(2)})$ and $\log(q^{(3)})$.}
\label{fighist}
\end{figure}

\newpage
\setlength{\unitlength}{0.8cm}
\begin{figure}[!ht]
\begin{picture}(6,25)
\put(0,24){a) $L$}
\put(0,19){b) $V^{\vp_2}$}
\put(0,14){c) $\bar V^{(1)}$}
\put(0,9){d) $\bar V^{(2)}$}
\put(0,4){e) $\bar V^{(3)}$}
\put(1.5,20){\includegraphics{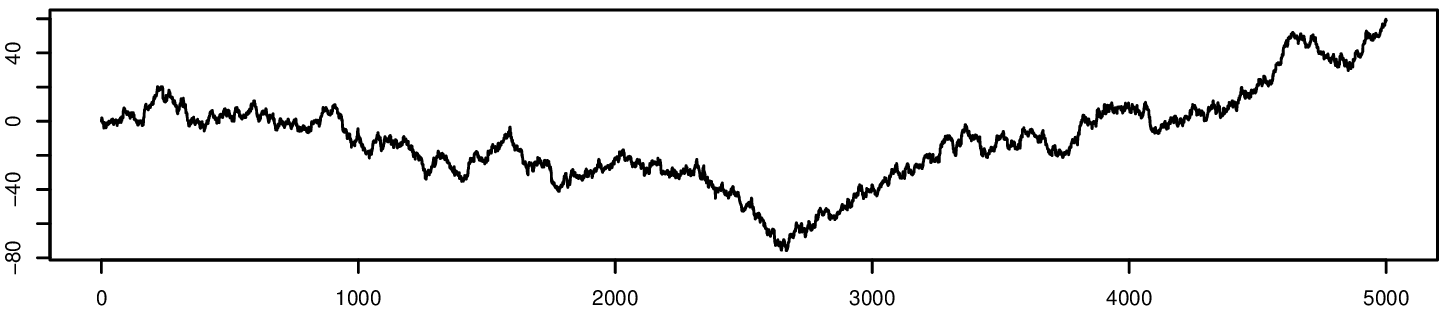}}
\put(1.5,15){\includegraphics{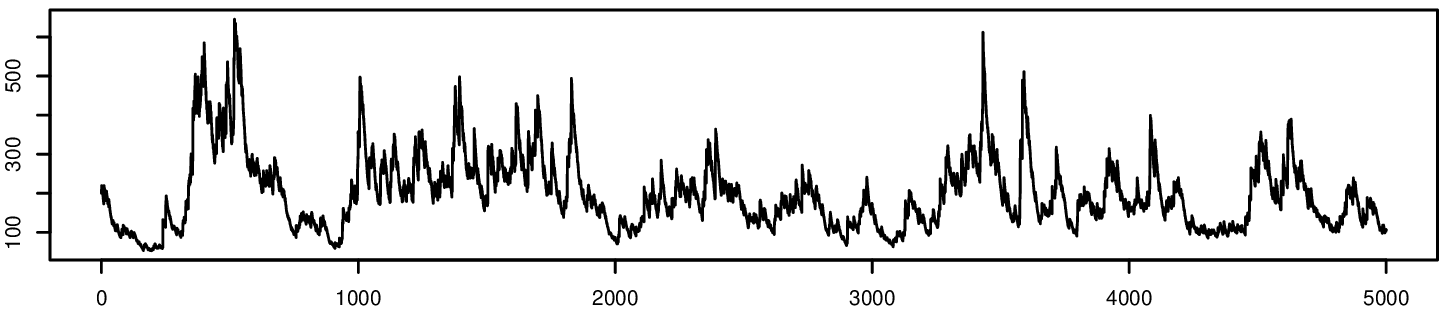}}
\put(1.5,10){\includegraphics{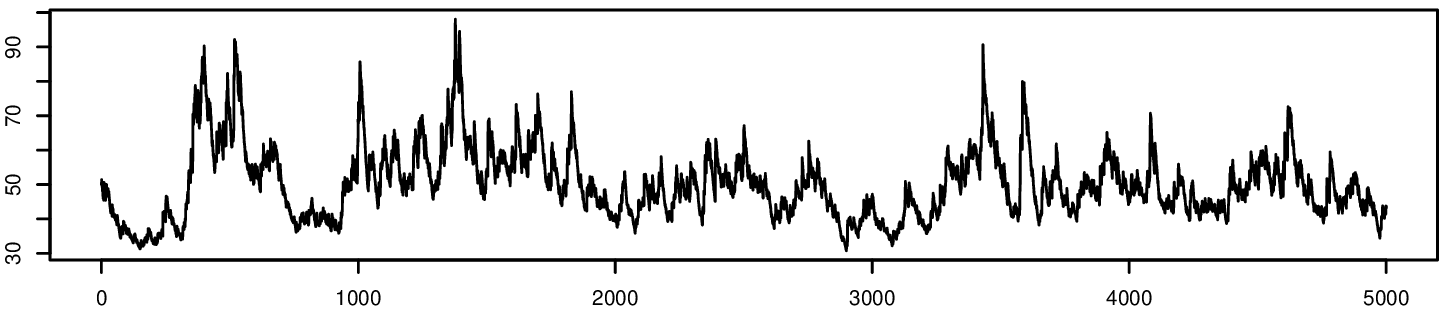}}
\put(1.5,5){\includegraphics{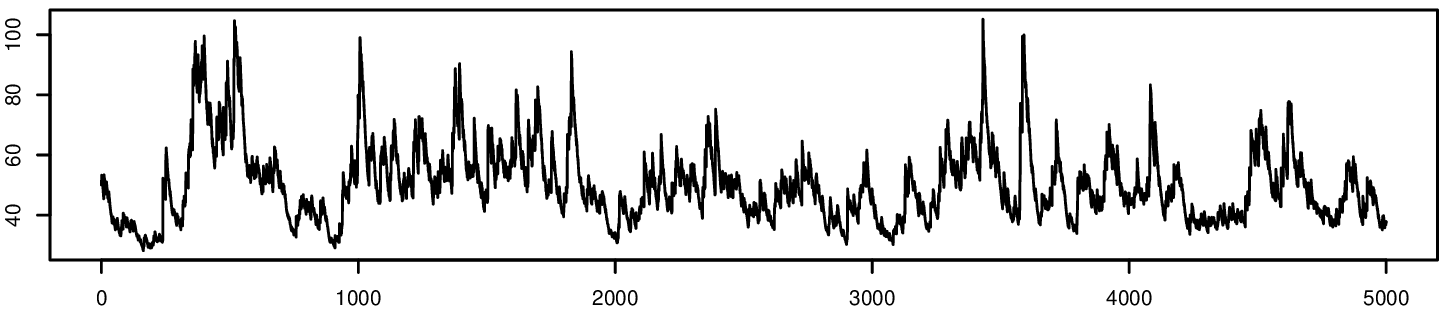}}
\put(1.5,0){\includegraphics{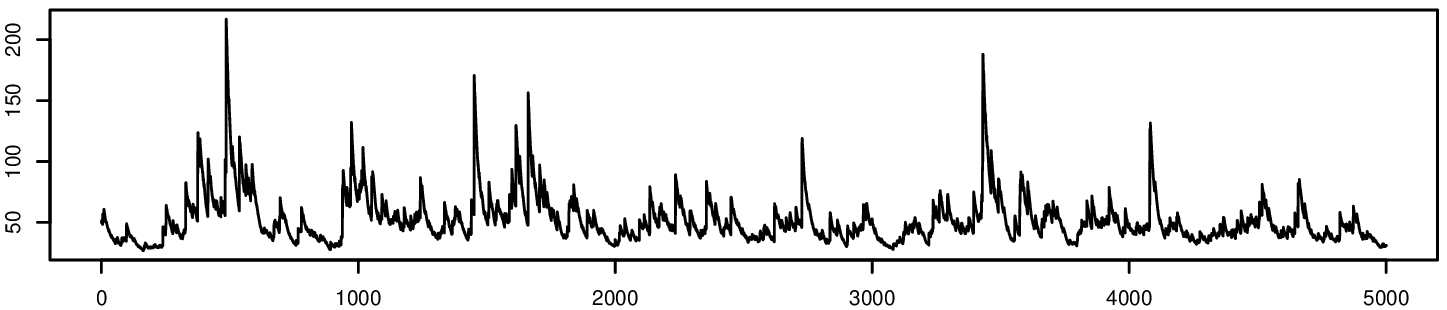}}
\end{picture}
\caption{\scriptsize The parameters are: $\beta=1$, $\eta=0.05$, $\vp_1=0.02$, $\vp_2=0.045$, $\pi=0.9\delta_{\vp_1}+0.1\delta_{\vp_2}$, starting value is the mean; a) $L$ is a variance gamma process with mean $0$ and variance $1$; b) COGARCH process driven by $L$ with parameter $\vp_2$; c) supCOGARCH process $\bar V^{(1)}$ where $V^{\vp_2}$ is driven by $L$ and $V^{\vp_1}$ is driven by an independent copy of $L$; d) supCOGARCH process $\bar V^{(2)}$ driven by $L$; e) supCOGARCH process $\bar V^{(3)}$ driven by $L$.}\label{figvol}
\end{figure}

\newpage
\setlength{\unitlength}{0.8cm}
\begin{figure}[!ht]
\begin{picture}(6,25)
\put(0,24){a) $L$}
\put(0,19){b) $G^{\vp_2}$}
\put(0,14){c) $G^{(1)}$}
\put(0,9){d) $G^{(2)}$}
\put(0,4){e) $G^{(3)}$}
\put(1.5,20){\includegraphics{VG-levy.eps}}
\put(1.5,15){\includegraphics{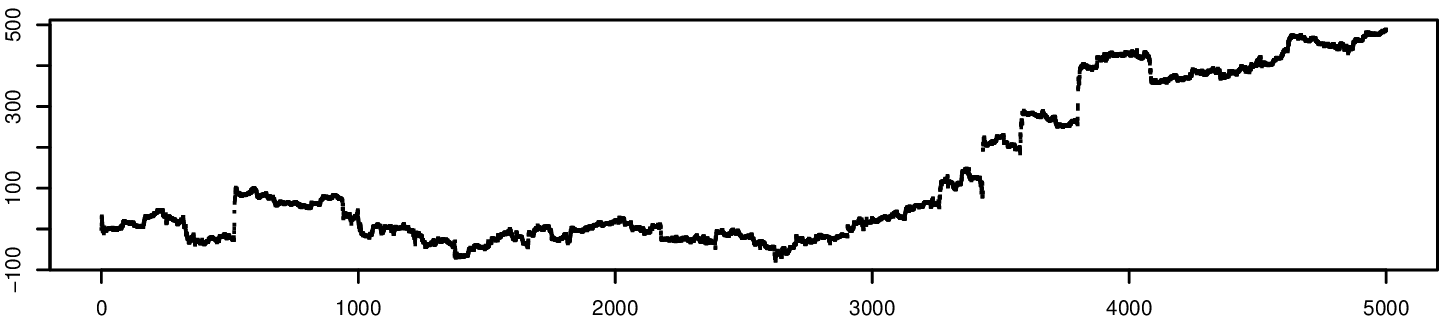}}
\put(1.5,10){\includegraphics{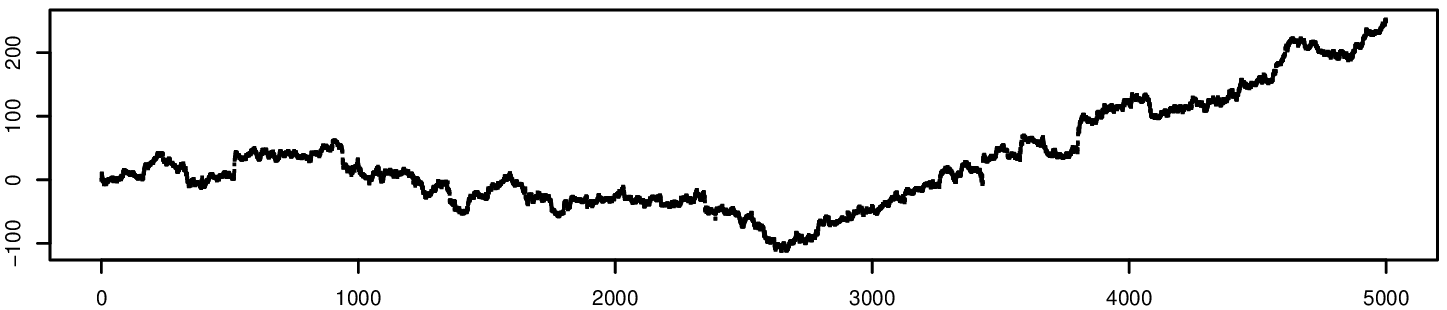}}
\put(1.5,5){\includegraphics{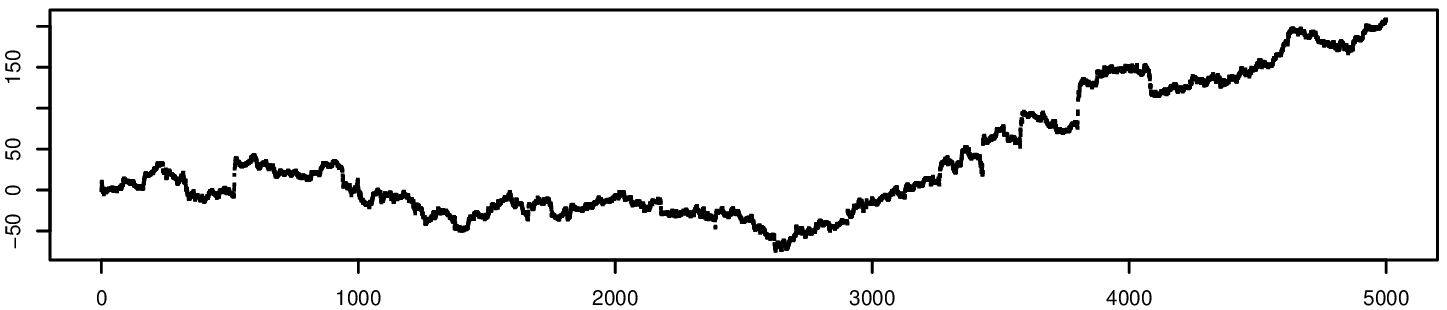}}
\put(1.5,0){\includegraphics{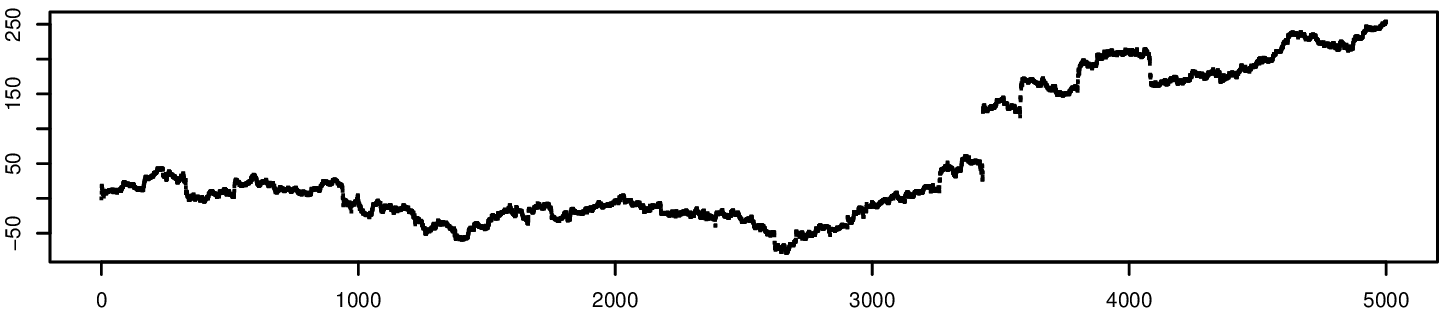}}
\end{picture}
\caption{\scriptsize The parameters are: $\beta=1$, $\eta=1$, $\vp_1=0.5$, $\vp_2=0.995$, $\pi=0.9\delta_{\vp_1}+0.1\delta_{\vp_2}$; a) $L$ is the same L\'evy process as in Figure \ref{figvol}; b) COGARCH price process driven by $L$ with parameter $\vp_2$; c), d) and f) supCOGARCH price processes $G^{(1)}$, $G^{(2)}$ and $G^{(3)}$ driven by $L$.}\label{figprice}
\end{figure}
\newpage

\subsubsection*{Estimation}

 A thorough investigation of the statistical analysis of the supCOGARCH processes via parameter estimation goes far beyond the scope of this paper. Nevertheless let us shortly comment on the main task, namely the estimation of the superposition measure $\pi$, for which no standard estimation procedure is available as it is typical for multifactor models.
 
In the case of the supOU stochastic volatility model, several attempts have been made to infer the underlying superposition measure. For example, assuming the form $\pi=\sum_{i=1}^K p_i\delta_{\vp_i}$ for some known $K\in\bbn$, in \cite{Barn:Shep:2001,Barn:Shep:2012} a least-square fit of the autocovariance function is employed. In \cite{STW:13} a generalized method of moments is used to estimate the supOU model under the hypothesis of a gamma distribution for $\pi$. Further, in \cite{Griffin:2011, Griffin:Steel:2010} a Bayesian nonparametric approach is proposed in the case that $\pi$ is a discrete or continuous measure, respectively. Whether and how these approaches, or the estimation procedures for the COGARCH model mentioned in the Introduction can be adapted to the supCOGARCH case, is open.

\section{Proofs and auxiliary results}\label{s6}

\subsection{Proofs for Section \ref{s31}}\label{s61}

\begin{proof}[Proof of Theorem \ref{thm-app1-stat}]
First assume that \eqref{approach1statiff} holds. Then we know that each COGARCH process $V^{\vp_i}$ in the
representation $\bar{V}^{(1)}=\sum_{i=1}^\infty p_i V^{\vp_i}$
admits a unique stationary distribution given by the law of
$V_\infty^{\vp_i}=\beta\int_{\bbr_+} \ee^{-X_{t}^{\vp_i}}\,\dd t$ and that by choosing
$V^{\vp_i}_0\eqd V^{\vp_i}_\infty$
independently of $S^{\vp_i}$, the corresponding COGARCH process $V^{\vp_i}$ is strictly stationary. Thus setting
$\bar{V}_0^{(1)} := \sum_{i\in\NN} p_i V^{\vp_i}_0$, $\bar{V}^{(1)}$ becomes strictly stationary as shown in the following.

Assume for a moment that $\pi$ has finite support. Then for every
$0\leq t_1< t_2<\ldots <t_n$, $n\in \N$, $h>0$ we can use the independence of $(V^{\vp_i})_{i\in\bbn}$ to obtain
\begin{align*}
 (\bar{V}^{(1)}_{t_1}, \ldots , \bar{V}^{(1)}_{t_n})
=&~ \left(\sum_{i=1}^m p_i V_{t_1}^{\vp_i}, \ldots, \sum_{i=1}^m p_i V_{t_n}^{\vp_i}\right)= \sum_{i=1}^m p_i (V_{t_1}^{\vp_i}, \ldots, V_{t_n}^{\vp_i})\\
\eqd&~  \sum_{i=1}^m p_i (V_{t_1+h}^{\vp_i}, \ldots, V_{t_n+h}^{\vp_i})= (\bar{V}^{(1)}_{t_1+h}, \ldots , \bar{V}^{(1)}_{t_n+h}).
\end{align*}
Due to the fact that $\sum_{i=1}^m p_i V_{t}^{\vp_i}$ is strictly increasing in $m$, the case for $\pi$ having countable support
follows now by a standard monotonicity argument.

Conversely, assume that \eqref{approach1statiff} is violated, i.e. there exists a $\vp_j$ with $\pi(\{\vp_j\})>0$ such that $V^{\vp_j}$ has no stationary distribution.
Then by  \cite[Thm. 3.1]{KLM:2004}  $V_t^{\vp_j}$ converges in probability to $\infty$ as $t\to \infty$.
This yields that also $\bar{V}_t^{(1)}= p_jV_t^{\vp_j}+ \sum_{i=1, i\neq j}^\infty p_i V_{t}^{\vp_i}$ converges in probability
to $\infty$ as $t\to \infty$ since $\sum_{i=1, i\neq j}^\infty p_i V_{t}^{\vp_i}$ is nonnegative. Hence $\bar{V}_t^{(1)}$ cannot be strictly stationary.
\end{proof}

\begin{proof}[Proof of Proposition \ref{propapproach1moments}]
 The moment conditions as well as the formulas for expectation and covariance follow directly from \eqref{approach1statsol} together with
the corresponding results for the COGARCH process \eqref{momentcondcogarch}, \eqref{mean-cog} and \eqref{acf-cog} observing that all appearing processes
are strictly positive.
\end{proof}

\begin{proof}[Proof of Proposition \ref{tail-supCOG1}]
Throughout this proof we slightly change our notation as follows.
Given i.i.d. subordinators $(S^i)_{i\in\bbn}$, we denote the COGARCH process driven by $S^i$ with parameter $\vp>0$ by $V^{i,\vp}$
such that we have $\bar V^{(1)}=\sum_{i=1}^\infty p_i V^{i,\vp_i}$. If $\kappa<\bar\kappa$,
then we know by the definition of $\Psi$ in \eqref{eq-defpsi} and \cite[Lemma 4.1(d)]{KLM:2004} that for every $\vp\in(0,\bar\vp]$ there
exists a unique constant $\kappa(\vp)>0$ which satisfies \eqref{tailcond} with $\bar\kappa$ replaced by $\kappa(\vp)$ and such that $\kappa(\vp)$
is strictly decreasing in $\vp$. Moreover, as shown in \cite[Thm. 6]{klm:2006}, for each $i\in\bbn$ the tail of $V^{i,\vp}$ is asymptotically equivalent to $C(\vp)x^{-\kappa(\vp)}$ with some specific constant $C(\vp)>0$. So by \cite[Lemma A3.26]{EKM} we have
\[ x^\kappa \PP[\bar V^{(1)}_0>x] \leq x^{\kappa-\bar\kappa}x^{\bar\kappa}\PP\left[\sum_{i=1}^\infty p_i V^{i,\bar\vp}_0 > x\right]\to 0 \]
as $x\to\infty$ for all $\kappa<\bar\kappa$. Conversely, if $\kappa> \kappa(\vp_i)$ for some $i\in\bbn$, then
\[ x^\kappa \PP[\bar V^{(1)}_0 > x] \geq x^\kappa \PP[p_i V^{i,\vp_i}_0> x]
= x^{\kappa-\kappa(\vp_i)}p_i^{\kappa(\vp_i)} (x/p_i)^{\kappa(\vp_i)}\PP[V_0^{i,\vp_i}>x/p_i]\to\infty. \]
Recalling that $\kappa(\vp)$ is defined via the equation $\Psi(\kappa(\vp),\vp)=0$, this result is still valid for $\kappa>\bar\kappa$
since we have $\inf_{i\in\bbn} \kappa(\vp_i) =\bar\kappa$ by the implicit function theorem.

Finally, it remains to consider the case $\kappa=\bar\kappa$. If $\pi(\{\bar\vp\})=0$, then using \cite[Lemma 2]{klm:2006} and again
\cite[Lemma A3.26]{EKM}, we obtain
\[ x^{\bar\kappa}\PP[\bar V^{(1)}_0>x]\leq x^{\bar\kappa} \PP\left[\sum_{\vp_i\leq\vp} p_i V^{i,\vp}_0 + \sum_{\vp_i>\vp} p_i V^{i,\bar\vp}_0>x\right]
\sim x^{\bar\kappa} \PP\left[\sum_{\vp_i>\vp} p_i V^{i,\bar\vp}_0>x\right]\to\sum_{\vp_i>\vp} p_i^{\bar\kappa} C(\bar\vp) \]
as $x\to\infty$ for every $\vp\in(0,\bar\vp)$. Letting $\vp\to\bar\vp$, the assertion follows.
The case $\pi(\{\bar\vp\})=:\bar p>0$ now follows from the results above and ($\bar i$ is the index corresponding to $\bar\vp$)
\begin{align*} x^{\bar\kappa}\PP[\bar V^{(1)}_0>x] &= x^{\bar\kappa}\PP\left[\sum_{\vp_i<\bar\vp} p_i V^{i,\vp_i}_0 + \bar p V^{\bar i, \bar\vp}_0 > x\right]\\
&\leq x^{\bar\kappa}\PP[\bar p V^{\bar i, \bar\vp}_0 > x(1-\eps)] + x^{\bar\kappa}\PP\left[\sum_{\vp_i<\vp} p_i V^{i,\vp_i}_0>\eps x\right]
\to \left(\frac{\bar p}{1-\eps}\right)^{\bar\kappa}C(\bar\vp).  \end{align*}
Letting $\eps\to0$, we may set $C:=\bar p^{\bar\kappa}C(\bar\vp)$.
\end{proof}

\subsection{Proofs for Section \ref{s32}}\label{s62}

For the proof of Theorem \ref{thm-app2-stat} we need the following lemma.

\begin{lemma} \label{lem-costationaritaet}
Let $(S_t)_{t\in \RR}$ be a subordinator without drift and define the double-indexed processes
$(X^\vp_t)_{t\in\RR, \vp\in\Phi_L}$ and  $(V^{\vp}_t)_{t\in\RR,\vp\in\Phi_L}$ according to \eqref{eq-defX2sided}
and \eqref{cog-2sided}. Then for all $n\in\NN$, $-\infty<t_1<t_2<\ldots< t_n<\infty$, $h>0$
$$((V_{t_1}^\vp)_{\vp\in\Phi_L},(V_{t_2}^\vp)_{\vp\in\Phi_L}, \ldots, (V_{t_n}^\vp)_{\vp\in\Phi_L})
\eqd ((V_{t_1+h}^\vp)_{\vp\in\Phi_L},(V_{t_2+h}^\vp)_{\vp\in\Phi_L}, \ldots, (V_{t_n+h}^\vp)_{\vp\in\Phi_L}),$$
i.e. the $\bbr^{\Phi_L}$-valued stochastic process $((V_t^{\vp})_{\vp\in\Phi_L})_{t\in\RR}$ is strictly stationary.
In particular, every finite-dimensional process $(V^{\vp_1}_t, \ldots , V^{\vp_m}_t)_{t\in\RR}$, $m\in\NN$, is strictly stationary.
\end{lemma}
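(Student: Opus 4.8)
The plan is to exploit the fact that all the processes $V^\vp$ are driven by the \emph{same} subordinator $S$: the whole family $(V^\vp_t)_{\vp\in\Phi_L}$ is then a single measurable functional of the path of $S$, and shifting the family in time amounts to shifting $S$, an operation that preserves the law of $S$ because a two-sided L\'evy process has stationary increments. Thus strict stationarity of the family will be a direct consequence of the stationarity of the increments of $S$.

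First I would reduce the $\bbr^{\Phi_L}$-valued statement to finite-dimensional distributions, since equality in law of $\bbr^{\Phi_L}$-valued processes is determined by the joint laws of $(V^{\vp_j}_{t_k})_{1\le j\le m,\,1\le k\le n}$ for arbitrary finite collections $\vp_1,\dots,\vp_m\in\Phi_L$ and $t_1<\dots<t_n$; this is exactly the content of the concluding ``in particular'', so it suffices to prove the displayed identity for such finite vectors. Next I would record the functional representation: from \eqref{eq-defX2sided} and \eqref{cog-2sided}, for $s\le t$ one has $X^\vp_t-X^\vp_s=\eta(t-s)-\sum_{s<u\le t}\log(1+\vp\Delta S_u)$, whence
\[ V^\vp_t=\beta\int_{(-\infty,t]}\exp\Big(-\eta(t-s)+\sum_{s<u\le t}\log(1+\vp\Delta S_u)\Big)\,\dd s, \]
which displays $V^\vp_t$ as a measurable function of the jumps $(\Delta S_u)_{u\le t}$, simultaneously for every $\vp\in\Phi_L$; well-definedness and finiteness for $\vp\in\Phi_L$ are guaranteed by \eqref{cog-2sided}.

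Writing $\theta_h S:=(S_{\cdot+h}-S_h)$ for the time-shifted subordinator, I would then establish the pathwise shift identity
\[ V^\vp_{t+h}(S)=V^\vp_t(\theta_h S),\quad t\in\RR,\ h>0,\ \vp\in\Phi_L, \]
by substituting $s\mapsto s+h$ in the Lebesgue integral and $u\mapsto u+h$ in the jump sum, using $\Delta(\theta_h S)_{u'}=\Delta S_{u'+h}$. Since this holds simultaneously for all $\vp$ and all $t$, the shifted finite vector equals $\Phi(\theta_h S)$, where $\Phi(S):=(V^{\vp_j}_{t_k}(S))_{j,k}$ is one fixed measurable map and the unshifted vector is $\Phi(S)$.

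Finally, because $(S_t)_{t\in\RR}$ is a two-sided L\'evy process it has stationary and independent increments, so $\theta_h S\eqd S$ (verified on finite-dimensional distributions, the increments of $\theta_h S$ over any intervals having the same joint law as those of $S$). Applying the measurable map $\Phi$ to both sides yields $\Phi(\theta_h S)\eqd\Phi(S)$, i.e. $(V^{\vp_j}_{t_k+h})_{j,k}\eqd(V^{\vp_j}_{t_k})_{j,k}$, which is the assertion. I expect the main obstacle to be the careful bookkeeping in the change-of-variables behind the shift identity, in particular matching the range of the jump sum to that of the Lebesgue integral after the substitution and confirming that no boundary jump is gained or lost; the remaining ingredients (measurability of $\Phi$ after reduction to finite collections, and stationarity of the increments of a two-sided L\'evy process) are standard.
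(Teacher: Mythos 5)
Your proof is correct, but it takes a different route from the paper's. Both arguments share the same first reduction: equality in law of $\bbr^{\Phi_L}$-valued random elements is determined by finite collections $(\vp_1,\dots,\vp_m)$, so it suffices to prove stationarity of the finite-dimensional vector process $(V^{\vp_1}_t,\dots,V^{\vp_m}_t)_{t\in\RR}$ (the paper cites Kallenberg for this step). Where you diverge is in how that finite-dimensional stationarity is established. The paper simply imitates the proof of \cite[Thm.~3.2]{KLM:2004}, i.e.\ it exploits the joint affine recursion $V^{\vp_j}_{t+h}=A^{\vp_j}_{t,t+h}V^{\vp_j}_t+B^{\vp_j}_{t,t+h}$, in which the pairs $(A^{\vp_j}_{t,t+h},B^{\vp_j}_{t,t+h})_{j\le m}$ are independent of the past with a joint law depending only on $h$, together with the invariance of the initial joint law supplied by the two-sided construction — a Markov/time-homogeneity argument. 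You instead bypass the Markov structure entirely: you exhibit the whole family $(V^\vp_t)_{\vp,t}$ as one fixed measurable functional $\Phi$ of the path of $S$, prove the pathwise shift-equivariance $V^\vp_{t+h}(S)=V^\vp_t(\theta_h S)$ by a change of variables (your bookkeeping is right: $X^\vp_t-X^\vp_s=\eta(t-s)-\sum_{s<u\le t}\log(1+\vp\Delta S_u)$ for all $s\le t$, and jumps are unaffected by the centering $-S_h$), and then invoke $\theta_h S\eqd S$ for the two-sided subordinator. Your route is more self-contained — it does not defer to an external proof and makes transparent that stationarity here is purely a consequence of the stationary increments of $S$ — at the modest cost of having to know that $\Phi$ is measurable on the path space of c\`adl\`ag functions (jump functionals and their Lebesgue integrals are measurable for the cylinder $\si$-algebra), which you correctly flag as standard; the paper's route, by contrast, recycles existing COGARCH machinery and yields the Markov-type structure as a by-product.
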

\begin{proof}
 Imitating the proof of \cite[Thm. 3.2]{KLM:2004} for the finite-dimensional process
$(V^{\vp_1}_t, \ldots , V^{\vp_m}_t)_{t\in\RR}$, $m\in\NN$, one readily sees that
$$\left((V^{\vp_1}_{t_1}, \ldots , V^{\vp_m}_{t_1}), \ldots, (V^{\vp_1}_{t_n}, \ldots , V^{\vp_m}_{t_n})\right)
\eqd \left((V^{\vp_1}_{t_1+h}, \ldots , V^{\vp_m}_{t_1+h}), \ldots, (V^{\vp_1}_{t_n+h}, \ldots , V^{\vp_m}_{t_n+h})\right).$$
As stochastic processes with the same index space are equal in distribution, whenever their finite-dimensional
distributions are equal (e.g. \cite[Prop. 2.2]{Kallenberg02}), this already yields the assertion.
\end{proof}

\begin{proof}[Proof of Theorem \ref{thm-app2-stat}]
The result follows from the definition of $\bar{V}^{(2)}$ and Lemma~\ref{lem-costationaritaet}.
\end{proof}

To prove Proposition \ref{prop-corrcogarches}, another auxiliary lemma will be established.

\begin{lemma} \label{lem-processX}
 Let $(S_t)_{t\in \RR}$ be a subordinator without drift, let $\vp, \tilde{\vp} \geq 0$ be fixed and define the processes
$(X^\vp_t)_{t\in\RR}$ and  $(X^{\tilde{\vp}}_t)_{t\in\RR}$ according to \eqref{eq-defX2sided}. Set $X_t:=X^\vp_t+X^{\tilde\vp}_t$, $t\in\RR$.
\begin{enumerate}
 \item The process $(X_t)_{t\in\RR}$ is a L\'evy process with characteristic triplet $(2\eta, 0, \nu_X)$ where
$\nu_X=\nu_S\circ T^{-1}$ for $T\colon\RR_+\to\RR_-, y\mapsto -\log(1+(\vp+\tilde\vp)y+\vp\tilde\vp y^2)$.
\item Let $\vp,\tvp>0$. Then $\bbe[\ee^{-\kappa X_t}]$ is finite at $\kappa>0$ for some $t>0$, or, equivalently, for all $t>0$,
if and only if $\bbe[S^{2\kappa}_1]<\infty$. In this case we have $\bbe[\ee^{-\kappa X_t}]=\ee^{t h_\kappa(\vp,\tilde\vp)}$, where
\begin{equation*}  h_{\kappa}(\vp,\tilde\vp)=-2\eta \kappa+\int_{\bbr_+} \big(((1+\vp y)(1+\tilde\vp y))^\kappa-1\big)\,\nu_S(\dd y). \end{equation*}
For $\kappa=1$ we have 
\begin{equation} \label{eq-h-explicit}
 h(\vp,\tilde\vp):=  h_1(\vp,\tilde\vp) = -2\eta +(\vp+\tvp)\E[S_1] + \vp\tvp \var[S_1].
\end{equation}
\end{enumerate}
\end{lemma}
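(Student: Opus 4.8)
The plan is to exploit the fact that $X^\vp$ and $X^{\tvp}$ are built from the \emph{same} subordinator $S$, so that their sum $X=X^\vp+X^{\tvp}$ is again a L\'evy process whose jumps can be read off directly from the jumps of $S$. First I would recall from \eqref{eq-defX2sided} that for $t\ge0$ we have $X^\vp_t=\eta t-\sum_{0<s\le t}\log(1+\vp\Delta S_s)$, and similarly for $X^{\tvp}$. Adding these gives the pathwise identity
\[
X_t=2\eta t-\sum_{0<s\le t}\log\big((1+\vp\Delta S_s)(1+\tvp\Delta S_s)\big),\quad t\ge0,
\]
and the analogous representation for $t<0$. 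Since $X^\vp$ and $X^{\tvp}$ are both functions of the single L\'evy process $S$ via a common summation over its jumps, the sum $X$ is itself a L\'evy process: it has stationary independent increments because $S$ does, and the drift term is $2\eta t$. This establishes that $X$ is a L\'evy process and identifies its drift as $2\eta$. The Gaussian part is $0$ because $S$ has no Brownian component and the transformation is pure-jump. For the jump part, each jump $\Delta S_s=y>0$ of $S$ produces a jump of $X$ of size $-\log((1+\vp y)(1+\tvp y))=-\log(1+(\vp+\tvp)y+\vp\tvp y^2)=T(y)$, so the L\'evy measure of $X$ is the image $\nu_S\circ T^{-1}$ of $\nu_S$ under $T$. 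This proves part (a); the only point requiring a little care is to verify that $T$ is a well-defined measurable map $\RR_+\to\RR_-$ (which is clear, as $(1+\vp y)(1+\tvp y)\ge1$ for $y>0$), so that the image measure is a genuine L\'evy measure of a spectrally negative process.

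For part (b), I would apply the L\'evy--Khintchine formula to the exponential moment $\bbe[\ee^{-\kappa X_t}]$. Since $X$ is a L\'evy process with the triplet found in (a), the standard moment criterion (e.g.\ the analogue of \cite[Lemma 4.1]{KLM:2004} used for $X^\vp$ in \eqref{eq-defpsi}) gives that $\bbe[\ee^{-\kappa X_t}]<\infty$ for some, equivalently all, $t>0$ if and only if the integral $\int_{|z|>1}\ee^{-\kappa z}\,\nu_X(\dd z)<\infty$. Transporting this back through $T$, the integrand $\ee^{-\kappa T(y)}=((1+\vp y)(1+\tvp y))^\kappa$ grows like $(\vp\tvp)^\kappa y^{2\kappa}$ as $y\to\infty$, so the finiteness condition is exactly $\int_{y>1}y^{2\kappa}\,\nu_S(\dd y)<\infty$, which is equivalent to $\bbe[S_1^{2\kappa}]<\infty$. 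On this event the cumulant is computed by substituting the triplet into the characteristic exponent, yielding
\[
\log\bbe[\ee^{-\kappa X_1}]=-2\eta\kappa+\int_{\bbr_+}\big(\ee^{-\kappa T(y)}-1\big)\,\nu_S(\dd y)
=-2\eta\kappa+\int_{\bbr_+}\big(((1+\vp y)(1+\tvp y))^\kappa-1\big)\,\nu_S(\dd y),
\]
which is $h_\kappa(\vp,\tvp)$, and then $\bbe[\ee^{-\kappa X_t}]=\ee^{th_\kappa(\vp,\tvp)}$ by the L\'evy property.

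Finally, the explicit formula \eqref{eq-h-explicit} for $\kappa=1$ follows by expanding $(1+\vp y)(1+\tvp y)-1=(\vp+\tvp)y+\vp\tvp y^2$ and integrating term by term against $\nu_S$, using the identities $\int_{\bbr_+}y\,\nu_S(\dd y)=\E[S_1]$ and $\int_{\bbr_+}y^2\,\nu_S(\dd y)=\var[S_1]$ (the latter because $S$ has no drift, so its second moment is carried entirely by the L\'evy measure); cf.\ \eqref{eq-psi-explicit}. The main obstacle, though a mild one, is the exponential-moment step in (b): one must argue carefully that the finiteness of the exponential moment of the L\'evy process $X$ is governed solely by the tail behaviour of $\nu_X$ at $-\infty$ (equivalently of $\nu_S$ at $+\infty$), and match the exponent $2\kappa$ by analysing the asymptotics of $T$. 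Everything else is a direct transcription of the single-parameter computation in \eqref{eq-defpsi}--\eqref{eq-psi-explicit} to the product $(1+\vp y)(1+\tvp y)$.
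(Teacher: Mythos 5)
Your proposal is correct and follows essentially the same route as the paper: part (a) via the pathwise identity $X_t=2\eta t-\sum_{0<s\le t}\log\big((1+\vp\Delta S_s)(1+\tvp\Delta S_s)\big)$, and part (b) via the exponential-moment criterion for L\'evy processes (\cite[Thm.~25.17]{sato}) pulled back through $T$, with the $y^{2\kappa}$ growth of the integrand matching $\bbe[S_1^{2\kappa}]<\infty$, and term-by-term integration for $\kappa=1$. The only cosmetic difference is that the paper writes out explicitly the set $D$ on which $|T(y)|\le 1$, which you handle implicitly through the asymptotics at infinity; also note that $\var[S_1]=\int_{\bbr_+}y^2\,\nu_S(\dd y)$ holds because $S$ has no Gaussian part (the drift is irrelevant for the variance), while the absence of drift is what gives $\E[S_1]=\int_{\bbr_+}y\,\nu_S(\dd y)$.
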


\begin{proof}
(a) Observe that by definition
\[ X_t=2\eta t-\sum_{0<s\leq t}\log\big[(1+\vp\Delta S_s)(1+\tvp\Delta S_s)\big]
= 2\eta t-\sum_{0<s\leq t} \log(1+(\vp+\tvp)\Delta S_s + \vp\tvp(\Delta S_s)^2) \]
for $t\geq0$, which directly yields the assertion.\\
(b) By \cite[Thm. 25.17]{sato} $\bbe[\ee^{-\kappa X_t}]$ is finite for some, or, equivalently, for every
$t>0$ if and only if
$$\int_{|y|>1}\ee^{-\kappa y}\,\nu_X(\dd y)= \int_{|y|>1}\ee^{-\kappa y}\,\nu_S(T^{-1}(\dd y))=
\int_{y\in D^c} (1+(\vp + \tvp)y + \vp\tvp y^2)^\kappa \,\nu_S(\dd y)<\infty $$
where $D=\left[\frac{-(\vp+\tvp)-\sqrt{(\vp-\tvp)^2+4\vp\tvp\ee}}{2\vp\tvp}, \frac{-(\vp+\tvp)+\sqrt{(\vp-\tvp)^2+4\vp\tvp\ee}}{2\vp\tvp}\right]$. This
directly yields (b).
\end{proof}

\bproof[Proof of Proposition \ref{prop-corrcogarches}]
Due to Lemma \ref{lem-costationaritaet} $(V_t^\vp,V_t^\tvp)_{t\in\RR}$ is strictly stationary such that it suffices to consider $t>0$. 
Assume  w.l.o.g. that $0<\vp\leq \tvp$. Then it follows from the
definition of the COGARCH process that $V^\vp\leq V^\tvp$. Hence $\bbe[V^\vp_tV_t^{\tilde\vp}]\leq \bbe[V^\tvp_t V_t^{\tvp}]$ and
similarly $\bbe[V^\vp_t V_{t+h}^{\tvp}]\leq \bbe[V^\tvp_t V_{t+h}^{\tvp}]$, which are both finite as \eqref{momentcondcogarch} is
given for $\kappa=2$.
We start with the computation of $\bbe[V^\vp_tV_t^{\tilde\vp}]$ and use \eqref{cog} to obtain
\begin{align}
\bbe[V^\vp_tV_t^{\tilde\vp}]
&=\beta^2\bbe\left[\int_{(0,t]} \ee^{X_{s}^\vp-X_t^\vp}\,\dd s\int_{(0,t]} \ee^{X_{r}^{\tilde\vp}-X_t^{\tilde\vp}}\,\dd r\right]
+\beta\bbe[V_0^{\tilde\vp}]\bbe\left[\int_{(0,t]} \ee^{X_{s}^\vp-X_t^\vp-X_t^\tvp}\,\dd s\right]+\nonumber\\
&+\beta\bbe[V_0^\vp]\bbe\left[\int_{(0,t]} \ee^{X_{r}^\tvp-X_t^\tvp-X_t^\vp}\,\dd r\right]
+\bbe[V_0^\vp V_0^\tvp]\bbe[\ee^{-X_t^\vp-X_t^\tvp}]\nonumber\\
&=: \beta^2 I_1+\beta \bbe[V_0^\tvp] I_2 + \beta\bbe[V_0^\vp] I_3+\bbe[V_0^\vp V_0^\tvp] I_4.\label{aux2}
\end{align}
Recall the L\'evy process $X$ defined in Lemma~\ref{lem-processX} and observe that the increments of $X$
and $X^\vp$ on disjoint intervals are mutually independent. Thus we have by \eqref{eq-defpsi} and Lemma~\ref{lem-processX}(b)
\begin{align*}
I_1&=\bbe\left[\int_{(0,t]}\int_{(0,r]} \ee^{X_{s}^\vp-X_{r}^\vp+X_{r}^\vp-X_t^\vp+X_{r}^\tvp-X_t^\tvp}\,\dd s\,\dd r\right]
+\bbe\left[\int_{(0,t]}\int_{(r,t]} \ee^{X_{r}^\tvp-X_{s}^\tvp+X_{s}^\tvp-X_t^\tvp+X_{s}^\vp-X_t^\vp}\,\dd s\,\dd r\right]\\
&=\int_{(0,t]}\int_{(0,r]} \ee^{(r-s)\Psi(1,\vp)+(t-r)h(\vp,\tvp)}\,\dd s\,\dd r + \int_{(0,t]}\int_{(r,t]} \ee^{(s-r)\Psi(1,\tvp)+(t-s)h(\vp,\tvp)}\,\dd s\,\dd r\\
&=\frac{-a\ee^{ct}+c\ee^{at}+a-c}{a^2c-ac^2}+\frac{-b\ee^{ct}+c\ee^{bt}+b-c}{b^2c-bc^2},
\end{align*}
where $a:=\Psi(1,\vp)$, $b:=\Psi(1,\tvp)$ and $c:=h(\vp,\tvp)$. Very similar calculations lead to
\[ I_2=\frac{\ee^{bt}-\ee^{ct}}{b-c},\quad I_3=\frac{\ee^{at}-\ee^{ct}}{a-c}\]
while we know from Lemma~\ref{lem-processX}(b) that $I_4= \ee^{ct}$.

According to \eqref{mean-cog} we have $\bbe[V_0^\vp]=-\beta/a$ and $\bbe[V_0^\tvp]=-\beta/b$.
Furthermore, we have $\bbe[V^\vp_0 V^\tvp_0]=\bbe[V^\vp_t V^\tvp_t]$ due to stationarity. Putting all this into \eqref{aux2}, we obtain
\[ (1-\ee^{ct})\bbe[V^\vp_t V^\tvp_t] = \beta^2(1-\ee^{ct})\left(\frac{1}{ac}+\frac{1}{bc}\right). \]
Since $t>0$ we have $1-\ee^{ct}\neq0$, so dividing the last equation by this term yields
$$ \bbe[V^\vp_t V^\tvp_t]=\frac{\beta^2}{\Psi(1,\tvp)h(\vp,\tvp)}+\frac{\beta^2}{\Psi(1,\vp)h(\vp,\tvp)}$$
from which \eqref{eq-crosscor1} and \eqref{eq-crosscor1b} follow immediately.

To obtain the formula for $\cov[V^\vp_t, V_{t+h}^{\tvp}]$ observe first that
\beq\label{eq-splittV} V_{t+h}^\tvp= A_{t,t+h}^\tvp V_t^\tvp + B_{t,t+h}^\tvp, \eeq
where
$$A_{t,t+h}^\tvp= \ee^{-(X^\tvp_{t+h} - X^\tvp_t)} \quad \mbox{and} \quad B_{t,t+h}^\tvp = \beta \int_{(t,t+h]} \ee^{-(X^\tvp_{t+h} - X^\tvp_{s})} \,\dd s.$$
In particular, we see that $A_{t,t+h}^\tvp$ and $B_{t,t+h}^\tvp$ are independent of $(V^\vp_t, V_{t}^{\tvp})$ such that
\begin{equation} \label{eq-crosscoraux}
 \bbe[V^\vp_t V_{t+h}^{\tvp}] = \bbe[V^\vp_t( A_{t,t+h}^\tvp V_t^\tvp + B_{t,t+h}^\tvp)]
= \E[ A_{t,t+h}^\tvp]\bbe[V^\vp_t V_t^\tvp]  + \E[V^\vp_t]\E[B_{t,t+h}^\tvp].
\end{equation}
Now since
$$\E[ A_{t,t+h}^\tvp]= \E[\ee^{-(X^\tvp_{t+h} - X^\tvp_t)} ] = \E[\ee^{-X^\tvp_{h} } ]= \ee^{h \Psi(1,\tvp)}$$
and
\begin{align*}
 \E[B_{t,t+h}^\tvp]
= \beta \int_{(t,t+h]} \ee^{(t+h-s)\Psi(1,\tvp)}\,\dd s = \frac{\beta}{\Psi(1,\tvp)} \left(\ee^{h \Psi(1,\tvp)}-1\right)
= \E[V_0^\tvp]\left(1-\ee^{h \Psi(1,\tvp) }\right),
\end{align*}
Eq.~\eqref{eq-crosscoraux} directly yields
\begin{equation*}
 \bbe[V^\vp_t V^\tvp_{t+h}]
=\ee^{h \Psi(1, \tvp)} \bbe[V^\vp_0 V^\tvp_0] + \left( 1-\ee^{h \Psi(1, \tvp)} \right) \E[V_0^\vp]\E[V_0^\tvp],
\end{equation*}
which gives \eqref{eq-crosscor2}.
\eproof

\begin{proof}[Proof of Proposition \ref{propapproach2moments}]
 Due to the fact that all appearing processes are nonnegative we can use Tonelli's Theorem to determine the given formulas directly from the definition of $\bar{V}^{(2)}$.
\end{proof}

\begin{proof}[Proof of Proposition \ref{prop-tails-sup2}.]
The proof is mainly the same as the proof of Proposition \ref{tail-supCOG1}, so we only indicate the differences.
For $\kappa<\bar\kappa$ use the estimation $\PP[\bar V^{(2)}_0>x]\leq \PP[V^{\bar\vp}_0>x]$.
For $\kappa>\bar\kappa$ and $\pi(\{\bar\vp\})=0$, it suffices to consider $\kappa>\kappa(\vp_i)$ after having chosen
sequences $(\vp_i)_{i\in\bbn}$ and $(\eps_i)_{i\in\bbn}$ with $\pi((\vp_i-\eps_i,\vp_i])>0$ for each $i\in\bbn$.
Using $\PP[\bar V^{(2)}_0>x]\geq \PP\left[\pi((\vp_i-\eps_i,\vp_i])V^{\vp_i}_0>x\right]$ gives the result.
Similarly, use $\PP[\bar V^{(2)}_0>x]\leq \PP\left[\pi((0,\vp])V^\vp_0+\pi((\vp,\bar\vp])V^{\bar\vp}_0>x\right]$ for $\kappa=\bar\kappa$ and $\pi(\{\bar\vp\})=0$.
For $\kappa=\bar\kappa$ and $\pi(\{\bar\vp\})=:\bar p>0$, we may use $\bar V^{2}_0=\int_{(0,\bar\vp)} V^{\vp}_0\,\pi(\dd\vp)+\bar p V^{\bar\vp}_0$. Finally,
the case $\kappa>\bar\kappa$ and $\pi(\{\bar\vp\})>0$ follows from the preceding arguments.
\end{proof}


\subsection{Proofs for Section \ref{s33}}\label{s63}

\bproof[Proof of Proposition \ref{prop-app3solution}]
By \eqref{eq-defX2sided} and \eqref{cog-2sided}, the function $\vp\mapsto V^\vp_s$ is increasing in $\vp$ for every $s\in\bbr$. As a consequence, we have for all $t\geq0$
\[ A_t\leq\int_{(0,t]}\int_{\Phi_L} \vp_\mathrm{max} V^{\vp_\mathrm{max}}_s\,\La^S(\dd s,\dd \vp) = \vp_\mathrm{max} \int_{(0,t]} V^{\vp_\mathrm{max}}_s \,\dd S_s <\infty. \]
Since $A$ is by definition c\`adl\`ag, $\bbg^{(3)}$-adapted and increasing, $A$ is a semimartingale 
\cite[e.g. Def. I.4.21]{JS2} such that
uniqueness of the solution of \eqref{eq-def-approach3} follows from \cite[Thm.~V.7]{Protter04}.
It remains to show that \eqref{supcogarch2-explicit} solves \eqref{eq-def-approach3}. Using integration by parts \cite[Def. I.4.45]{JS2} and \cite[Prop. I.4.49d]{JS2}, we obtain
\begin{align*}
\dd \bar V^{(3)}_t &= \left(\bar V^{(3)}_0+\int_{(0,t]} \ee^{\eta s}\,\dd A_s + \beta \int_{(0,t]} \ee^{\eta s}\,\dd s\right)\,\dd \!\left(\ee^{-\eta t}\right) + \ee^{-\eta t}(\ee^{\eta t}\,\dd A_t+\beta \ee^{\eta t}\,\dd t)\\
&=-\eta \bar V^{(3)}_t\,\dd t + \dd A_t + \beta\,\dd t = (\beta-\eta \bar V^{(3)}_t)\,\dd t + \dd A_t.
\end{align*}\\[-1cm]
\eproof

In order to show that the supCOGARCH 3 process $\bar V^{(3)}$ from \eqref{eq-def-approach3} has a stationary solution we need a series of lemmata.

\ble\label{lem1}
Let $n,m\in\bbn$. For $-\infty<t_1<\ldots<t_{m+1}<\infty$, $0<\vp_1<\ldots<\vp_{n+1}<\vp_\mathrm{max}$ and $h>0$ we have
\begin{align}\label{aux1}
 \lefteqn{(V^{\vp_i}_{t_j},\La^S((t_j,t_{j+1}]\times(\vp_i,\vp_{i+1}])\colon i\leq n, j\leq m)}\\
\eqd&~(V^{\vp_i}_{t_j+h},\La^S((t_j+h,t_{j+1}+h]\times(\vp_i,\vp_{i+1}])\colon i\leq n, j\leq m). \nonumber
\end{align}
\ele
\bproof
For $1\leq i\leq n$ and $1\leq j\leq m$ write $\La^{i}_{j}:=\La^S((t_j,t_{j+1}]\times(\vp_i,\vp_{i+1}])$ and $\La^{i}_{j,h}:=\La^S((t_j+h,t_{j+1}+h]\times(\vp_i,\vp_{i+1}])$
and let $Z^m$ and $Z^m_h$ denote the left- and right-hand side of \eqref{aux1}, respectively.
We first consider $m=1$.
On the one hand, we obtain from Lemma \ref{lem-costationaritaet}
that $(V^{\vp_1}_{t_1},\ldots,V^{\vp_n}_{t_1}) \eqd(V^{\vp_1}_{t_1+h},\ldots,V^{\vp_n}_{t_1+h})$.
On the other hand, due to the independence
of their single components, the vectors $(\La^1_1,\ldots,\La^n_1)$ and $(\La^1_{1,h},\ldots,\La^n_{1,h})$ have the same distribution.
Since additionally the $V$-vector is independent of the $\La^S$-vector, the assertion in the case $m=1$ follows. 
For $m\geq 2$, using induction
and the independence of $\La^i_m$ and $Z^{m-1}$, it suffices to show that the conditional distribution of $(V^{\vp_i}_{t_m}\colon i=1,\ldots,n)$ given
$Z^{m-1}$ does not change if shifted by $h$. By Markovianity (see \cite[Thm. 3.2]{KLM:2004}) this distribution only depends on
$(V^{\vp_i}_{t_{m-1}},\La^i_{m-1}\colon i=1,\ldots,n)$ such that by \eqref{eq-splittV} and using the notation there, we only need to consider
the distribution of $(A^{\vp_i}_{t_{m-1},t_{m}},B^{\vp_i}_{t_{m-1},t_{m}}\colon i=1,\ldots,n)$ given $(\La^i_{m-1}\colon i=1,\ldots,n)$.
Since the former vector is a measurable transformation of $(\Delta S_s\colon t_{m-1}\leq s\leq t_m)$, it is evident that this distribution
is invariant under a shift by $h$, which finishes the proof.
\eproof

In connection to \eqref{processA}, we show a further auxiliary result. To this end define
\begin{equation} \label{processA2sided}
 A_t:=\int_{(0,t]}\int_{\Phi_L} \vp V^\vp_{s-}\,\La^S(\dd s,\dd\vp), \quad t\geq0,\quad A_t:=-\int_{(t,0]} \int_{\Phi_L} \vp V^\vp_{s-}\,\La^S(\dd s,\dd \vp), \quad t<0.
\end{equation}

\ble\label{lem2}
The process $(A_t)_{t\in\bbr}$ defined in \eqref{processA2sided} has stationary increments,
 i.e. for every $n\in\bbn$, $-\infty<t_1<\ldots<t_{n+1}<\infty$ and $h>0$ we have
\beq\label{statincrA}(A_{t_2}-A_{t_1},\ldots,A_{t_{n+1}}-A_{t_n})\eqd(A_{t_2+h}-A_{t_1+h},\ldots,A_{t_{n+1}+h}-A_{t_n+h}).\eeq
\ele

\bproof
By an approximation via Riemann sums (note that $\vp\mapsto V^\vp_s$ is continuous in $\vp$ for all $s$), cf. \cite[Prop. I.4.44]{JS2},
we may use Lemma \ref{lem1} to obtain
\begin{align*}
(A_{t_2}-A_{t_1},\ldots,A_{t_{n+1}}-A_{t_n})&=\left(\int_{t_1}^{t_{2}} \int_{\Phi_L} \vp V^\vp_{s-}\,\La^S(\dd s,\dd\vp),\ldots, \int_{t_n}^{t_{n+1}} \int_{\Phi_L} \vp V^\vp_{s-}\,\La^S(\dd s,\dd\vp)\right)\\
&\eqd\left(\int_{t_1+h}^{t_{2}+h} \int_{\Phi_L} \vp V^\vp_{s-}\,\La^S(\dd s,\dd\vp) ,\ldots,\int_{t_n+h}^{t_{n+1}+h} \int_{\Phi_L} \vp V^\vp_{s-}\,\La^S(\dd s,\dd\vp)\right)\\
&=(A_{t_2+h}-A_{t_1+h},\ldots,A_{t_{n+1}+h}-A_{t_n+h}).
\end{align*}\\[-1cm]
\eproof

\begin{proof}[Proof of Theorem \ref{thm-approach3statsol}]
Since $\ee^{-\eta t}\int_{(0,t]} \ee^{\eta s}\,\dd s \to \eta^{-1}$ as $t\to\infty$ the process $(V_t^{(3)})_{t\geq 0}$ converges
in distribution to a finite random variable as $t\to\infty$ if and only if
\[ \ee^{-\eta t} \int_{(0,t]} \ee^{\eta s}\,\dd A_s = \int_{(0,t]} \ee^{\eta(s-t)}\,\dd A_s = \int_{(-t,0]} \ee^{\eta s}\,\dd A_{s+t} \eqd \int_{(-t,0]} \ee^{\eta s}\,\dd A_s\eqd \int_{(0,t]} \ee^{-\eta s}\,\dd A_s \]
converges to a finite random variable in distribution as $t\to\infty$, where we used Lemma \ref{lem2} for the distributional equalities.
By monotonicity this is equivalent to the existence of
\[ \int_{\bbr_+} \ee^{-\eta s}\,\dd A_s = \int_{\bbr_+} \int_{\Phi_L} \ee^{-\eta s}\vp V_{s-}^\vp\,\La^S(\dd s,\dd\vp) \]
in probability. As shown in \cite[Thm. 3.1]{Chong13} and the following remark, this holds if and only if \eqref{statcond} is valid.

Hence in case that \eqref{statcond} is violated, no stationary distribution can exist.
On the other hand, given \eqref{statcond}, following the above computations, the process $(V_t^{(3)})_{t\geq 0}$ converges in distribution
to $\bar{V}^{(3)}_\infty:= \frac{\beta}{\eta} + \int_{0}^\infty \ee^{-\eta s}\,\dd A_s$, which is thus the unique possible stationary distribution.

To show that $(V_t^{(3)})_{t\geq 0}$ is actually strictly stationary when started in a random variable $V_0^{(3)}\eqd\bar{V}^{(3)}_\infty$ which is independent of $\La^L$ on $\bbr_+\times\Phi_L$, we set $\bar V^{(3)}_0:= \frac{\beta}{\eta}+\int_{(-\infty,0]} \ee^{\eta s}\,\dd A_s$.
Then using Lemma~\ref{lem2} we obtain for all  $0\leq t_1<\ldots<t_n$ and $h>0$
\begin{align*} &~(\bar V^{(3)}_{t_1},\ldots,\bar V^{(3)}_{t_n})\\
\eqd &~\left(\int_{(-\infty,t_1]} \ee^{-\eta(t_1-s)}\,\dd A_s+\beta\int_{(-\infty,t_1]} \ee^{-\eta(t_1-s)}\,\dd s,\ldots,\int_{(-\infty,t_n]} \ee^{-\eta(t_n-s)}\,\dd A_s+\beta\int_{(-\infty,t_n]} \ee^{-\eta(t_n-s)}\,\dd s\right)\\
= &~\left(\int_{(-\infty,0]} \ee^{\eta s}\,\dd A_{s+t_1}+\beta\int_{\bbr_+} \ee^{-\eta s}\,\dd s,\ldots,\int_{(-\infty,0]} \ee^{\eta s}\,\dd A_{s+t_n}+\beta\int_{\bbr_+} \ee^{-\eta s}\,\dd s\right)\\
\eqd &~\left(\int_{(-\infty,0]} \ee^{\eta s}\,\dd A_{s+t_1+h}+\beta\int_{\bbr_+} \ee^{-\eta s}\,\dd s,\ldots,\int_{(-\infty,0]} \ee^{\eta s}\,\dd A_{s+t_n+h}+\beta\int_{\bbr_+} \ee^{-\eta s}\,\dd s\right)\\
\eqd &~(\bar V^{(3)}_{t_1+h},\ldots,\bar V^{(3)}_{t_n+h})
\end{align*}
and hence the process $(V_t^{(3)})_{t\geq 0}$ is strictly stationary.

It remains to show that (a) and (b) imply \eqref{statcond}.
First observe from \eqref{eq-defX2sided} and \eqref{cog-2sided} that for fix $s$ the function $\vp \mapsto V^\vp_s$ is increasing in $\vp$. 
So if (a) holds, we have
\[\int_{\bbr_+} \int_{\Phi_L} \int_{\bbr_+} 1\wedge (y \vp V_s^\vp \ee^{-\eta s}) \,\dd s\,\pi(\dd\vp)\,\nu_S(\dd y) \leq \int_{\bbr_+} \int_{\bbr_+} 1\wedge (y \vp_0 V_s^{\vp_0} \ee^{-\eta s}) \,\dd s\,\nu_S(\dd y)<\infty\]
because \eqref{statcond} holds for $\pi=\delta_{\vp_0}$ (in this case $\bar V^{(3)}$ is just the COGARCH process $V^{\vp_0}$).

Finally,  (b) follows from (a) together with the fact that $\vp_\mathrm{max}^{(\kappa)}< \vp_\mathrm{max}$. 
\end{proof}

For the proof of Proposition \ref{secord-supcog3} we need the following Lemma.

\ble\label{quadvar}
Let $(A_t)_{t\in\bbr}$, $V^\vp$ and $\bar{V}^{(3)}$ be defined as in \eqref{processA2sided}, \eqref{cog-2sided} and \eqref{supcogarch3-explicitstationary}, respectively. Then, under the assumptions of Proposition~\ref{secord-supcog3}, we have for $t\geq0$
\begin{align*} [A,A]_t&=\int_{(0,t]}\int_{\Phi_L}\int_{\bbr_+} \vp^2 (V^\vp_{s-})^2 y^2\,\mu^{\La^S}(\dd s,\dd\vp,\dd y)\quad \mbox{and}\\
[\bar{V}^{(3)},V^\vp]_t &=[A,V^\vp]_t =\vp\int_{(0,t]}\int_{\Phi_L}\int_{\bbr_+} \tvp V^\vp_{s-} V^\tvp_{s-} y^2\,\mu^{\La^S}(\dd s,\dd\tvp,\dd y),
\end{align*}
with $\mu^{\La^S}$ as defined in \eqref{jm}. For $t<0$, let the expressions on the left-hand side denote the respective quadratic (co-)variation on $(t,0]$. Then the integrals have to be computed on $(-t,0]$ instead of $(0,t]$.
\ele
  
\bproof
Obviously it suffices to consider $t\geq0$. Since $A$ is an increasing pure-jump process, 
\[
[A,A]_t=\sum_{0<s\leq t} (\Delta A_s)^2 = \sum_{0<s\leq t} \big(\Delta (\vp V^\vp_{\cdot -}y\ast\mu^{\La^S})_s\big)^2=\sum_{0<s\leq t}\left(\sum_{\vp\in\Phi_L} \vp V^\vp_{s-}\La^S(\{s\}\times\{\vp\})\right)^2
\]
Noting that for almost every $\om$ there is at most one $\vp\in\Phi_L$ at time $s$ with $\La^S(\{s\}\times\{\vp\})(\om)\neq0$, we obtain
\[ [A,A]_t=\sum_{0<s\leq t}\sum_{\vp\in\Phi_L} \vp^2 (V^\vp_{s-})^2\La^S(\{s\}\times\{\vp\})^2, \]
as desired. Similarly,
\[ [A,V^\vp]_t = \sum_{0<s\leq t} \Delta A_s \Delta V^\vp_s= \sum_{0<s\leq t}\left(\sum_{\tvp\in\Phi_L} \tvp V^\tvp_{s-}\La^S(\{s\}\times\{\tvp\})\right) \vp V^\vp_{s-}\Delta S_s \]
according to \eqref{cog-sde}.
Now observe that for all $s\in\bbr$, $\Delta S_s=\La^S(\{s\}\times\bbr_+)=\sum_{\vp\in\Phi_L} \La^S(\{s\}\times\{\vp\})$ where again for almost every $\om$ there is at most one $\vp\in\Phi_L$ at time $s$ with $\La^S(\{s\}\times\{\vp\})(\om)\neq0$.
As a result,
\[ [A,V^\vp]_t=\sum_{0<s\leq t} \vp V^\vp_{s-} \sum_{\tvp\in\Phi_L} \tvp V^\tvp_{s-}\La^S(\{s\}\times\{\tvp\})^2 = \vp(\tvp V^\vp_{\cdot -}V^\tvp_{\cdot -} y^2\ast\mu^{\La^S}_t).\]
Finally, we have $[\bar V^{(3)}, V^\vp]=[A,V^\vp]$ by \eqref{eq-def-approach3}.
\eproof

\begin{proof}[Proof of Proposition \ref{secord-supcog3}]
 First observe that Theorem \ref{thm-approach3statsol}(c) ensures the existence of the given stationary version of
$\bar V^{(3)}$ under the assumptions of the present theorem. \\
We set $m_1:=\int_{\bbr_+} y\,\nu_S(\dd y)=\bbe[S_1]$, $m_2:=\int_{\bbr_+} y^2\,\nu_S(\dd y)=\var[S_1]$ and assume w.l.o.g. $\pi(\{0\})=0$.
For the mean we use \eqref{mean-cog} and obtain
\begin{align*}
\bbe[\bar V_t^{(3)}]
&= \bbe[\bar V_0^{(3)}] = \frac{\beta}{\eta}+\bbe\left[\int_{(-\infty,0]} \ee^{\eta s}\,\dd A_s\right]
= \frac{\beta}{\eta}+m_1\int_{(-\infty,0]} \ee^{\eta s}\,\dd s \int_{\Phi_L} \vp \bbe[V^\vp_0]  \, \pi(\dd\vp)\\
&= \frac{\beta}{\eta}-\frac{\beta}{\eta}\int_{\Phi_L} \left(1+\frac{\eta}{m_1\vp-\eta}\right)\, \pi(\dd\vp)
= -\int_{\Phi_L} \frac{\beta}{m_1\vp-\eta}\,\pi(\dd\vp) = \int_{\Phi_L} \bbe[V^\vp_0] \,\pi(\dd\vp).
\end{align*}
To compute the autocovariance function of $\bar V^{(3)}$ observe that for $t\geq0$, $h\geq0$
we have from \eqref{supcogarch3-explicitstationary}
\begin{align}
\cov[\bar V^{(3)}_t,\bar V^{(3)}_{t+h}]
=&\,\ee^{-2\eta t}\ee^{-\eta h}\bbe\left[\int_{(-\infty,t]} \ee^{\eta s}\,\dd A_s \int_{(-\infty,t+h]} \ee^{\eta s}\,\dd A_s \right] \nonumber\\
&\quad -\bbe\left[\int_{(-\infty,t]} \ee^{-\eta(t-s)}\,\dd A_s\right]\bbe\left[\int_{(-\infty,t+h]} \ee^{-\eta(t+h-s)}\,\dd A_s\right]\nonumber\\
=&\,\ee^{-2\eta t}\ee^{-\eta h}\left(\bbe\left[\left(\int_{(-\infty,t]} \ee^{\eta s}\,\dd A_s\right)^2\right]
+\bbe\left[\int_{(-\infty,t]} \ee^{\eta s}\,\dd A_s \int_{t}^{t+h} \ee^{\eta s}\,\dd A_s \right]\right) \nonumber \\
&\quad -\frac{m_1^2}{\eta^2}\left(\int_{\Phi_L} \vp\bbe[V^\vp_0]\,\pi(\dd\vp)\right)^2\nonumber\\
=:&\,\ee^{-2\eta t}\ee^{-\eta h}(E_1+E_2)-\frac{m_1^2}{\eta^2}\left(\int_{\Phi_L} \vp\bbe[V^\vp_0]\,\pi(\dd\vp)\right)^2.\label{cov-aux}
\end{align}
For $E_1$ we can use integration by parts (see \cite[Eq. I.4.45]{JS2}) together with \cite[Thms. II.19 and VI.29]{Protter04} and Lemma \ref{quadvar} to obtain
\begin{align} E_1 &= 2 \bbe\left[\int_{(-\infty,t]} \left(\int_{(-\infty,s)} \ee^{\eta r} \,\dd A_r\right) \ee^{\eta s}\,\dd A_s\right]
+ \bbe\left[\int_{(-\infty,t]} \ee^{2\eta s}\,\dd [A,A]_s\right]\nonumber\\
&= 2m_1\int_{\Phi_L} \int_{(-\infty,t]} \bbe\left[\left(\int_{(-\infty,s]} \ee^{\eta r}\,\dd A_r\right)V^\vp_s\right]\ee^{\eta s}\vp \,\dd s\,\pi(\dd\vp) \nonumber \\
&\quad +m_2\int_{\Phi_L} \int_{(-\infty,t]} \ee^{2\eta s}\vp^2\bbe[(V^\vp_s)^2]\,\dd s\,\pi(\dd\vp) \nonumber\\
&= 2m_1\int_{\Phi_L} \int_{(-\infty,t]} g(s,\vp) \ee^{\eta s}\vp \,\dd s\,\pi(\dd\vp)
+\frac{m_2}{2\eta}\ee^{2\eta t} \int_{\Phi_L} \vp^2\bbe[(V^\vp_0)^2]\,\pi(\dd\vp),\label{E1}
\end{align}
where $g(s,\vp):=\bbe\left[V^\vp_s\int_{(-\infty,s]} \ee^{\eta r}\,\dd A_r\right]$.
Then again using integration by parts, Lemmas \ref{lem1} and \ref{quadvar} and Eqs.~\eqref{eq-psi-explicit}, \eqref{cog-sde}, \eqref{expect} and \eqref{mean-cog},
we obtain
\begin{align*}
g(s,\vp)
=&~\bbe\left[\int_{(-\infty,s]}\int_{(-\infty,r)} \ee^{\eta u}\,\dd A_u\,\dd V^\vp_r\right]
+\bbe\left[\int_{(-\infty,s]} V_{r-}^\vp\ee^{\eta r}\,\dd A_r\right] + \bbe\left[\int_{(-\infty,s]} \ee^{\eta r}\,\dd [A,V^\vp]_r\right]\\
=&~\bbe\left[\int_{(-\infty,s]} \left(\int_{(-\infty,r]} \ee^{\eta u}\,\dd A_u\right) (\beta-\eta V_r^\vp)\,\dd r\right]
+\bbe\left[\int_{(-\infty,s]}\left(\int_{(-\infty,r)} \ee^{\eta u}\,\dd A_u\right)\vp V^\vp_{r-}\,\dd S_r\right]\\
&\quad +\bbe\left[\int_{(-\infty,s]} V_{r-}^\vp\ee^{\eta r}\,\dd A_r\right]
+ \bbe\left[\int_{(-\infty,s]} \ee^{\eta r}\,\dd [A,V^\vp]_r\right]\\
=&~\beta m_1 \int_{(-\infty,s]} \int_{(-\infty,r]} \ee^{\eta u} \,\dd u\,\dd r \int_{\Phi_L} \tvp\bbe[V^\tvp_0]\,\pi(\dd\tvp)
+(m_1\vp-\eta)\int_{(-\infty,s]} g(r,\vp)\,\dd r \\
&\quad +m_1\int_{(-\infty,s]} \ee^{\eta r}\,\dd r \int_{\Phi_L} \tilde\vp \bbe[V^\vp_0 V^\tvp_0]\,\pi(\dd\tvp)
+m_2\vp\int_{(-\infty,s]}\ee^{\eta r}\,\dd r \int_{\Phi_L} \tvp\bbe[V^\vp_0 V^\tvp_0]\,\pi(\dd\tvp)\\
=&~\frac{\ee^{\eta s}}{\eta}\left(\frac{m_1\beta}{\eta} \int_{\Phi_L} \tvp \bbe[V^\tvp_0]\,\pi(\dd\tvp)
+ (m_1+m_2\vp)\int_{\Phi_L} \tvp \bbe[V^\vp_0 V^\tvp_0]\,\pi(\dd\tvp)\right) \\
&\quad + \Psi(1,\vp)\int_{(-\infty,s]} g(r,\vp)\,\dd r\\
=&~\frac{\ee^{\eta s}}{\eta} C(\vp)+\Psi(1,\vp)\int_{(-\infty,s]} g(r,\vp)\,\dd r,
\end{align*}
with
\[ C(\vp) := \int_{\Phi_L} C(\vp,\tvp)\,\pi(\dd\tvp),\quad C(\vp,\tvp)
:= -\frac{m_1}{\eta}\Psi(1,\vp)\tvp\bbe[V_0^\vp]\bbe[V_0^\tvp]+(m_1+m_2\vp)\tvp\bbe[V_0^\vp V_0^\tvp]. \]
Solving this integral equation yields $g(s,\vp)=\frac{C(\vp)\ee^{\eta s}}{\eta-\Psi(1,\vp)}$. Inserting this result in \eqref{E1} gives
\[ E_1 = \frac{m_1}{\eta} \ee^{2\eta t} \int_{\Phi_L} \frac{\vp C(\vp)}{\eta-\Psi(1,\vp)} \,\pi(\dd\vp)
+\frac{m_2}{2\eta}\ee^{2\eta t} \int_{\Phi_L} \vp^2\bbe[(V^\vp_0)^2]\,\pi(\dd\vp). \]
Let us turn to $E_2$ and denote the augmented natural filtration of $\La^L$ by $\bbg^{(3)}=(\calg_t^{(3)})_{t\in\bbr}$. Now taking conditional expectation w.r.t. $\cG^{(3)}_t$ and observing that $V^\vp$, $\bar V^{(3)}$ as well as $A$ are all adapted to $\GG^{(3)}$, we obtain
\[E_2=\bbe\left[\left(\int_{(-\infty,t]} \ee^{\eta s}\,\dd A_s\right) \bbe\left[\left.\int_{t}^{t+h}\int_{\Phi_L}
\ee^{\eta s}\vp V_{s-}^\vp\, \La^S(\dd s,\dd \vp)\right| \cG^{(3)}_t\right] \right].\]
Observing that the restriction of $\La^S$ on $(t,t+h]$ is independent of $\calf_t$, we have
\[E_2=\bbe\left[\left(\int_{(-\infty,t]} \ee^{\eta s}\,\dd A_s\right) m_1\int_{\Phi_L}\int_{(t,t+h]}
 \ee^{\eta s}\vp \bbe[V_{s-}^\vp|\cG^{(3)}_t]\,\dd s\,\pi(\dd \vp)\right].\]
According to \cite[Eq. (4.5)]{KLM:2004} we have
$\bbe[V_{s-}^\vp|\cG^{(3)}_t] = (V^\vp_t - \bbe[V^\vp_0])\ee^{(s-t)\Psi(1,\vp)}+\bbe[V^\vp_0]$ for $s>t$. So we get
\begin{align*}
E_2
=&~ m_1\bbe\left[\left(\int_{(-\infty,t]}\ee^{\eta s}\,\dd A_s\right)
\int_{\Phi_L}\int_{(t,t+h]} \ee^{\eta s} \vp\big((V^\vp_t-\bbe[V^\vp_0])\ee^{(s-t)\Psi(1,\vp)}+\bbe[V^\vp_0]\big)\,\dd s\,\pi(\dd\vp)\right]\\
=&~ m_1\int_{\Phi_L} \vp \bbe\left[V_t^\vp\int_{(-\infty,t]} \ee^{\eta s}\,\dd A_s\right] \int_{(t,t+h]}
\ee^{\eta s}\ee^{(s-t)\Psi(1,\vp)}\,\dd s\,\pi(\dd\vp)\\
&\quad +m_1\bbe\left[\int_{(-\infty,t]} \ee^{\eta s}\,\dd A_s\right] \int_{\Phi_L}
\vp\bbe[V^\vp_0] \int_{(t,t+h]} \ee^{\eta s}(1-\ee^{(s-t)\Psi(1,\vp)})\,\dd s\,\pi(\dd\vp)\\
=&~ \int_{\Phi_L} g(t,\vp) \ee^{\eta t}(\ee^{m_1\vp h}-1)\,\pi(\dd\vp)\\
&\quad +m_1^2\int_{(-\infty,t]}\ee^{\eta s}\,\dd s\int_{\Phi_L} \vp \bbe[V_0^\vp]\,\pi(\dd\vp)
\int_{\Phi_L} \vp \bbe[V_0^\vp] \ee^{\eta t}\left(\frac{\ee^{\eta h}-1}{\eta}-\frac{\ee^{m_1\vp h}-1}{m_1\vp}\right)\,\pi(\dd\vp)\\
=&~\ee^{2\eta t} \left( \int_{\Phi_L} \frac{C(\vp)}{\eta-\Psi(1,\vp)}
(\ee^{m_1\vp h}-1)\,\pi(\dd\vp)+\frac{m_1^2}{\eta^2} (\ee^{\eta h}-1) \left(\int_{\Phi_L} \vp \bbe[V_0^\vp]\,\pi(\dd\vp)\right)^2 \right.\\
&\quad \left.-\frac{m_1}{\eta}\int_{\Phi_L} \vp\bbe[V^\vp_0]\,\pi(\dd\vp)
\int_{\Phi_L} \bbe[V_0^\vp](\ee^{m_1\vp h}-1)\,\pi(\dd\vp) \right).
\end{align*}

Now inserting the results for $E_1$ and $E_2$ in \eqref{cov-aux}, we obtain
\begin{align}
\cov[\bar V_t^{(3)},\bar V_{t+h}^{(3)}]
=&~\ee^{-\eta h}\left(\frac{m_1}{\eta} \int_{\Phi_L} \frac{\vp C(\vp)}{\eta-\Psi(1,\vp)} \,\pi(\dd\vp)+\frac{m_2}{2\eta} \int_{\Phi_L} \vp^2\bbe[(V^\vp_0)^2]\,\pi(\dd\vp)\right)\nonumber \\
&\quad +\int_{\Phi_L} \frac{C(\vp)}{\eta-\Psi(1,\vp)} (\ee^{\Psi(1,\vp) h}-\ee^{-\eta h})\,\pi(\dd\vp)-\frac{m_1^2}{\eta^2} \ee^{-\eta h} \left(\int_{\Phi_L} \vp \bbe[V_0^\vp]\,\pi(\dd\vp)\right)^2 \nonumber \\
&\quad -\frac{m_1}{\eta}\int_{\Phi_L} \tvp\bbe[V^\tvp_0]\,\pi(\dd\tvp) \int_{\Phi_L} \bbe[V_0^\vp](\ee^{\Psi(1,\vp) h}-\ee^{-\eta h})\,\pi(\dd\vp) \nonumber \\
=&~\int_{\Phi_L} \int_{\Phi_L}  \left(\frac{C(\vp,\tvp)}{\eta-\Psi(1,\vp)} - \frac{m_1}{\eta}\tvp\bbe[V^\tvp_0]\bbe[V_0^\vp]\right)  \ee^{\Psi(1,\vp) h} \,\pi(\dd\vp)\,\pi(\dd\tvp) \nonumber \\
&\quad +  \ee^{-\eta h} \int_{\Phi_L} \int_{\Phi_L} \left(  \frac{m_1\vp C(\vp,\tvp)}{\eta(\eta-\Psi(1,\vp))} + \frac{m_2 \vp^2 \bbe[(V_0^\vp)^2]}{2\eta} - \frac{C(\vp,\tvp)}{\eta-\Psi(1,\vp)}\right. \nonumber \\
&\quad \left. - \frac{m_1^2}{\eta^2}\vp\tvp\bbe[V_0^\vp]\bbe[V_0^\tvp]+\frac{m_1}{\eta}\tvp\bbe[V_0^\vp]\bbe[V_0^\tvp] \right) \,\pi(\dd\vp)\,\pi(\dd\tvp), \label{eq-aux5}
\end{align}
where using Proposition \ref{prop-corrcogarches} together with Eqs.~\eqref{eq-psi-explicit} and \eqref{eq-h-explicit} gives
\begin{align}
 \lefteqn{\frac{C(\vp,\tvp)}{\eta-\Psi(1,\vp)}-\frac{m_1}{\eta}\tvp\bbe[V_0^\vp]\bbe[V_0^\tvp]} \nonumber \\
=&~-\frac{m_1\Psi(1,\vp)\tvp\bbe[V_0^\vp]\bbe[V_0^\tvp]}{\eta(\eta-\Psi(1,\vp))}
+ \frac{(m_1+m_2\vp)\tvp\bbe[V_0^\vp V_0^\tvp]}{\eta-\Psi(1,\vp)} - \frac{m_1}{\eta}\tvp\bbe[V_0^\vp]\bbe[V_0^\tvp]\nonumber \\
=&~ \frac{(m_1+m_2\vp)\tvp}{\eta-\Psi(1,\vp)}\bbe[V_0^\vp V_0^\tvp]-\frac{m_1\tvp}{\eta(\eta-\Psi(1,\vp))}\bbe[V_0^\vp]\bbe[V_0^\tvp](\Psi(1,\vp)+\eta-\Psi(1,\vp))\nonumber \\
=&~ \frac{\eta+\Psi(1,\tvp)+ h(\vp,\tvp)-\Psi(1,\vp)-\Psi(1,\tvp)}{\eta-\Psi(1,\vp)}\bbe[V_0^\vp V_0^\tvp]-\frac{\eta+\Psi(1,\tvp)}{\eta-\Psi(1,\vp)}\bbe[V_0^\vp]\bbe[V_0^\tvp]\nonumber \\
=&~ \left( 1+ \frac{h(\vp,\tvp)}{\eta-\Psi(1,\vp)}\right)\bbe[V_0^\vp V_0^\tvp] -  \left( 1+ \frac{\Psi(1,\vp)+\Psi(1,\tvp)}{\eta-\Psi(1,\vp)}\right)\bbe[V_0^\vp]\bbe[V_0^\tvp]\nonumber \\
=&~ \cov[V_0^\vp, V_0^\tvp],\label{eq-aux4}
\end{align}
while for the second part of \eqref{eq-aux5} by Eqs.~\eqref{eq-psi-explicit}, \eqref{mean-cog} and \eqref{acf-cog}
\begin{align}
&~\frac{m_1\vp C(\vp,\tvp)}{\eta(\eta-\Psi(1,\vp))} + \frac{m_2 \vp^2 \bbe[(V_0^\vp)^2]}{2\eta} - \frac{C(\vp,\tvp)}{\eta-\Psi(1,\vp)}- \frac{m_1^2}{\eta^2}\vp\tvp\bbe[V_0^\vp]\bbe[V_0^\tvp]+\frac{m_1}{\eta}\tvp\bbe[V_0^\vp]\bbe[V_0^\tvp] \nonumber \\
=&~\frac{\Psi(1,\vp)C(\vp,\tvp)}{\eta(\eta-\Psi(1,\vp))} + \frac{\big(\Psi(2,\vp)-2\Psi(1,\vp)\big)}{2\eta}\bbe[(V_0^\vp)^2] - \frac{m_1\tvp\Psi(1,\vp)}{\eta^2}\bbe[V_0^\vp]\bbe[V_0^\tvp]\nonumber \\
=&\!:~ F_1+ F_2 + F_3. \label{eq-aux3}
\end{align}
Now observe that by \eqref{squaremean-cog} and \eqref{mean-cog}
\begin{equation*}
 F_2= \frac{\Psi(2,\vp)}{2\eta}\bbe[(V_0^\vp)^2]-\frac{\Psi(1,\vp)}{\eta}\bbe[(V_0^\vp)^2] = -\frac{\beta}{\eta}\bbe[V_0^\vp] + \frac{\beta}{\eta}\frac{\bbe[(V_0^\vp)^2]}{\bbe[V_0^\vp]},
\end{equation*}
while
\begin{align*}
 F_3 =&~ - \frac{(\Psi(1,\tvp)+\eta)\Psi(1,\vp)}{\eta^2}\bbe[V_0^\vp]\bbe[V_0^\tvp]= -\frac{\beta^2}{\eta^2} + \frac{\Psi(1,\vp)}{\eta} \cov[V_0^\vp, V_0^\tvp] - \frac{\Psi(1,\vp)}{\eta}\bbe[V_0^\vp V_0^\tvp]\\
=&~ -\frac{\beta^2}{\eta^2} - \frac{\beta}{\eta} \frac{\cov[V_0^\vp, V_0^\tvp]}{\bbe[V_0^\vp]} + \frac{\beta}{\eta}\frac{\bbe[V_0^\vp V_0^\tvp]}{\bbe[V_0^\vp]}.
\end{align*}
On the other hand we obtain by similar means
\begin{align*}
 \Psi(1,\vp)C(\vp,\tvp)
=&~ \frac{(m_1+m_2\vp)\tvp\beta^2 (\Psi(1,\vp)+ \Psi(1,\tvp))}{h(\vp,\tvp)\Psi(1,\tvp)} -\frac{m_1\beta^2\tvp\Psi(1,\vp)}{\eta\Psi(1,\tvp)} \\
=&~ \frac{\beta^2}{\eta\Psi(1,\tvp)h(\vp,\tvp)}\left( \eta(m_1+m_2\vp)\tvp (\Psi(1,\vp)+ \Psi(1,\tvp)) -m_1 \tvp\Psi(1,\vp)h(\vp,\tvp) \right)\\
=&~ \frac{\beta^2(\eta-\Psi(1,\vp))}{\eta\Psi(1,\tvp)h(\vp,\tvp)}\big(h(\vp,\tvp)\Psi(1,\tvp) + \eta(\Psi(1,\vp)+\Psi(1,\tvp))\big)
\end{align*}
such that by Proposition \ref{prop-corrcogarches}
$$F_1= \frac{\beta^2}{\eta^2} - \frac{\beta}{\eta}\frac{\bbe[V_0^\vp V_0^\tvp]}{\bbe[V_0^\vp]}.$$
Finally inserting \eqref{eq-aux4} and \eqref{eq-aux3} with the obtained formulas for $F_1$, $F_2$ and $F_3$ in \eqref{eq-aux5} gives
\begin{align*}
\cov[\bar V_t^{(3)},\bar V_{t+h}^{(3)}]
=&~ \int_{\Phi_L} \int_{\Phi_L}  \left( \cov[V_0^\vp, V_0^\tvp] \ee^{\Psi(1,\vp) h} \right. \\
&\quad +  \left. \ee^{-\eta h}  \left(-\frac{\beta}{\eta}\bbe[V_0^\vp] + \frac{\beta}{\eta}\frac{\bbe[(V_0^\vp)^2]}{\bbe[V_0^\vp]}
- \frac{\beta}{\eta} \frac{\cov[V_0^\vp, V_0^\tvp]}{\bbe[V_0^\vp]}  \right) \right) \,\pi(\dd\vp)\,\pi(\dd\tvp),
\end{align*}
which yields the result.
\end{proof}

\begin{proof}[Proof of Proposition \ref{tail-supCOG3}]
To show the assertion for $\kappa<\bar\kappa$ we use $\PP[\bar V^{(3)}_0>x]\leq \PP[V^{\bar\vp}_0>x]$ and proceed as in the proof of
Proposition~\ref{tail-supCOG1}. For the other cases, observe that
\[ \bar V^{(3)}_0 \eqd \frac{\beta}{\eta} + \int_{\RR_+} \int_{\Phi_L} \ee^{-\eta t} \vp V^\vp_{t-}\,\La^S(\dd t,\dd\vp) = \sum_{i=1}^\infty \ee^{-\eta T_i} \vp_i V^{\vp_i}_{T_i-}\Delta S_{T_i}, \]
where $(T_i)_{i\in\bbn}$ are the jump times of $S$ and $(\vp_i)_{i\in\bbn}$ is an i.i.d. sequence with common distribution
$\pi$ which is also independent of $S$. We start by proving that, if $I$ is a measurable subset of $\Phi_L$ with $\pi(I)=:p>0$ and $\vp\in\Phi_L$,
 then there are constants $0<C_\ast(\vp,p),C^\ast(\vp,p)<\infty$, only dependent on $I$ via $p$, with
\begin{align}\label{tail-aux-eq0} C_\ast(\vp,p)
&=\liminf_{x\to\infty} x^{\kappa(\vp)}\PP\big[\sum_{\vp_i\in I} \ee^{-\eta T_i}\vp V^\vp_{T_i-}\Delta S_{T_i} > x \big]\\
&\leq\limsup_{x\to\infty} x^{\kappa(\vp)}\PP\big[\sum_{\vp_i\in I} \ee^{-\eta T_i}\vp V^\vp_{T_i-}\Delta S_{T_i} > x \big]
=C^\ast(\vp,p)  \nonumber \end{align}
and moreover, if $p\to0$, then $C_\ast(\vp,p),C^\ast(\vp,p)\to0$.

We abbreviate the sum in \eqref{tail-aux-eq0} by $V(\vp,I)$ or $V(I)$. Since the sequence $(\vp_i)_{i\in\bbn}$ is independent of everything else,
 the distribution of $V(I)$ only depends on $p$, which means that the constants $C_\ast(\vp,p)=:C_\ast(p)$ and $C^\ast(\vp,p)=:C^\ast(p)$
only depend on $p$. Also, they are obviously decreasing in $p$.
 Hence, for the claimed convergence to $0$, it suffices to show $C^\ast(2^{-n})\leq((1+2^{-\kappa(\vp)})/2)^n C(\vp)$ for all $n\in\bbn_0$, where $C(\vp)$
is the tail constant of $V^\vp_0$ as in the proof of Proposition~\ref{tail-supCOG1}. The case $n=0$ corresponds to $V(I)\eqd V^\vp_0$ and the statement is clear.
For $n\geq1$, find a set $I^\prime$ disjoint with $I$ such that $\pi(I^\prime)=\pi(I)=2^{-n}$ and therefore $\pi(J)=2^{-(n-1)}$ for $J=I\cup I^\prime$. Since
\begin{align*} \PP[V(J)>x]
 &=\PP[V(I)+V(I^\prime) >x]\geq 2\PP[V(I)>x]-\PP[V(I)>x,V(I^\prime)>x]\\
&\geq2\PP[V(I)>x]-\PP[V(J)>2x],
\end{align*}
we have by induction
\[ C^\ast(2^{-n})=\limsup_{x\to\infty} x^{\kappa(\vp)}\PP[V(I)>x]\leq \frac{1+2^{-\kappa(\vp)}}{2} C^\ast(2^{-(n-1)}). \]
It remains to show that $C^\ast(p)<\infty$ and $C_\ast(p)>0$ for all $p>0$.
Again by monotonicity, the first inequality is obvious and in the second inequality we only need to consider $p=1/n$. To this end, partition $\Phi_L$ into $n$ disjoint sets $(I_k)_{k=1,\ldots,n}$, each with $\pi(I_k)=1/n$. Then observe that
\[ \PP[V^\vp_0>x] \leq \PP[V(I_1)>x/n\text{ or } \ldots \text{ or } V(I_n)>x/n] \leq n \PP[V(I_1)>x/n], \]
which implies
\[ C_\ast(1/n) = \liminf_{x\to\infty} x^{\kappa(\vp)}\PP[V(I_1)>x] \geq Cn^{-(\kappa(\vp)+1)} > 0. \]

Let us come back to the main line of the proof of Proposition \ref{tail-supCOG3}. If $\vp<\bar\vp$, then we have by the above
\[ \liminf_{x\to\infty} x^\kappa \PP[\bar V^{(3)}_0>x]\geq \liminf_{x\to\infty} x^\kappa \PP\big[V(\vp,[\vp,\bar\vp])>x\big] \to \infty  \]
for all $\kappa>\kappa(\vp)$ and therefore, by the same argument as in the proof of Proposition \ref{tail-supCOG1}, for all $\kappa>\bar\kappa$.

Next, consider the case $\kappa=\bar\kappa$ and $\bar p=0$. Then, again by the above and the proof of \cite[Lemma 2]{klm:2006}
\begin{align*} \limsup_{x\to\infty} x^{\bar\kappa}\PP[\bar V^{(3)}_0>x] &\leq \limsup_{x\to\infty} x^{\bar\kappa} \PP\big[ V(\vp,(0,\vp]) + V(\bar\vp,(\vp,\bar\vp])>x\big]\\
&= \limsup_{x\to\infty} x^{\bar\kappa}\PP\big[V(\bar\vp,(\vp,\bar\vp])>x\big] = C^\ast(\bar\vp,\pi((\vp,\bar\vp])), \end{align*}
which converges to $0$ as $\vp\to\bar\vp$. For the case $\bar p>0$ first decompose
\[ \bar V^{(3)}_0 = \frac{\beta}{\eta} + \sum_{\vp_i\neq\bar\vp} \ee^{-T_i}\vp_i V^{\vp_i}_{T_i-}\Delta S_{T_i} + V(\bar\vp,\{\bar\vp\}) =: \frac{\beta}{\eta}+Z+V(\bar\vp,\{\bar\vp\})\]
and observe that $\limsup_{x\to\infty} x^{\bar\kappa}\PP[Z>x] = 0$ by the results so far.
Reading along the lines of the proof of \cite[Lemma 2]{klm:2006}, we obtain
\begin{align*}
\liminf_{x\to\infty} x^{\bar\kappa}\PP[\bar V^{(3)}_0>x] &= \liminf_{x\to\infty} x^{\bar\kappa}\PP[V(\bar\vp,\{\bar\vp\})>x]=C_\ast(\bar\vp,\bar p),\\
\limsup_{x\to\infty} x^{\bar\kappa}\PP[\bar V^{(3)}_0>x] &= \limsup_{x\to\infty} x^{\bar\kappa}\PP[V(\bar\vp,\{\bar\vp\})>x]=C^\ast(\bar\vp,\bar p),
\end{align*}
which finishes the proof.
\end{proof}

\subsection{Proofs for Section \ref{s5}}\label{s64} 

\begin{proof}[Proof of Theorem \ref{price-supCOG1}]
First observe that the assumption that $\pi$ has support in $\Phi_L^{(\kappa)}$ implies $\E\big[(\bar{V}^{(1)}_t)^\kappa\big]<\infty$. Therefore, $\bbe[L^{\vp_1}_1]=0$ implies
$$\E[\Delta^r G^{(1)}_t]= \E\left[\int_{(t,t+r]} \sqrt{\bar{V}^{(1)}_s} \,\dd L^{\vp_1}_s \right]= 0.$$
Next assume $\bbe[L_1^2]<\infty$. 
Using integration by parts and the fact that $G^{(1)}$ has stationary increments, we have
\begin{align*}
  \E[(\Delta^r G^{(1)}_t)^2]&= \E[(G^{(1)}_r)^2] = 2 \E\left[ \int_{(0,r]} G^{(1)}_{s-}\sqrt{\bar{V}^{(1)}_{s-}} \,\dd L^{\vp_1}_s  \right] + \E\left[\int_{(0,r]} \bar{V}^{(1)}_{s-} \,\dd[L^{\vp_1},L^{\vp_1}]_s  \right]\\
&= 0+ \var[L_1] \E[\bar{V}^{(1)}_0] r,
\end{align*}
which, together with Proposition \ref{propapproach2moments} and the relation between $S$ and $L$ in \eqref{eq-def-S}, gives the stated formula.
Furthermore, for $h\geq r>0$ we have, in view of the above computations and again using integration by parts,
\begin{align*}
 \cov[\Delta^r G^{(1)}_t,\Delta^r G^{(1)}_{t+h}]
&= \E\left[\Delta^r G^{(1)}_t \Delta^r G^{(1)}_{t+h} \right]\\
&= \E\left[ \int_{(0,t+h+r]} \mathds{1}_{(t, t+r]}(s) \sqrt{\bar{V}^{(1)}_{s-}} \,\dd L^{\vp_1}_s \int_{(0,t+h+r]} \mathds{1}_{(t+h, t+h+r]}(u) \sqrt{\bar{V}^{(1)}_{u-}} \,\dd L^{\vp_1}_u  \right]\\
&= \E\left[ \int_{(0,t+h+r]} \mathds{1}_{(t, t+r]}(s)\mathds{1}_{(t+h, t+h+r]}(s)  \bar{V}^{(1)}_{s-} \,\dd [L^{\vp_1},L^{\vp_1}]_s \right]\\
&\quad + \E\left[ \int_{(0,t+h+r]}\left( \int_{(0,u]} \mathds{1}_{(t, t+r]}(s) \sqrt{\bar{V}^{(1)}_{s-}} \,\dd L^{\vp_1}_s\right)\mathds{1}_{(t+h, t+h+r]}(u) \sqrt{\bar{V}^{(1)}_{u-}} \,\dd L^{\vp_1}_u  \right]\\
&\quad + \E\left[ \int_{(0,t+h+r]}\left( \int_{(0,u]} \mathds{1}_{(t+h, t+h+r]}(s) \sqrt{\bar{V}^{(1)}_{s-}} \,\dd L^{\vp_1}_s\right)\mathds{1}_{(t, t+r]}(u) \sqrt{\bar{V}^{(1)}_{u-}} \,\dd L^{\vp_1}_u  \right]\\
&=0.
\end{align*}
To compute the covariance of the squared increments, let $\bbg^{(1)}=(\calg_t^{(1)})_{t\geq0}$ denote the augmented natural filtration of $(L^{\vp_i})_{i\in\bbn}$ and observe that
\begin{align*}
\E \left[(\Delta^r G^{(1)}_0)^2(\Delta^r G^{(1)}_{h})^2  \right]
&= \E \left[\E \left[(\Delta^r G^{(1)}_0)^2(\Delta^r G^{(1)}_{h})^2| \cG^{(1)}_r  \right] \right]
= \E \left[ (\Delta^r G^{(1)}_0)^2 \E \left[(\Delta^r G^{(1)}_{h})^2| \cG^{(1)}_r  \right] \right],
\end{align*}
where again by integration by parts
\begin{align*}
\lefteqn{ \E \left[(\Delta^r G^{(1)}_{h})^2)| \cG^{(1)}_r  \right]
= \E \left[\left. \left( \int_{(h,h+r]} \sqrt{\bar{V}^{(1)}_s} \,\dd L^{\vp_1}_s \right)^2 \right| \cG^{(1)}_r  \right]}\\
&= 2 \E \left[\left. \int_{(h,h+r]} \left(\int_{(0,s]} \sqrt{\bar{V}^{(1)}_{u-}} \,\dd L^{\vp_1}_u \right) \sqrt{\bar{V}^{(1)}_{s-}} \dd L^{\vp_1}_s \right| \cG^{(1)}_r  \right] + \E \left[\left. \int_{(h,h+r]} \bar{V}^{(1)}_{s-} \,\dd [L^{\vp_1},L^{\vp_1}]_s  \right| \cG^{(1)}_r  \right]\\
&= 0 + \E[L_1^2] \int_{(h,h+r]} \E[ \bar{V}^{(1)}_{s-}| \cG^{(1)}_r ] \,\dd s.
\end{align*}
Next, for $s>r$ we obtain, using the notation as in the proof of Proposition~\ref{prop-corrcogarches},
\begin{align*}
\E\left[ \bar{V}^{(1)}_{s} | \cG^{(1)}_r \right]
&= \int_{\Phi_L^{(1)}} \E\left[V_s^\vp|\cG^{(1)}_r  \right]\, \pi(\dd \vp) 
= \int_{\Phi_L^{(1)}} \E\left[(A_{r,s}^\vp V_r^\vp + B_{r,s}^\vp)| \cG^{(1)}_r  \right] \, \pi(\dd \vp)\\
&= \int_{\Phi_L^{(1)}} \left( \E[ A_{r,s}^\vp] V_r^\vp + \E[B_{r,s}^\vp]\right)\, \pi(\dd \vp)\\
&= \int_{\Phi_L^{(1)}} \left(\ee^{(s-r)\Psi(1,\vp)} V_r^\vp + \E[V_0^\vp]\left(1 - \ee^{(s-r)\Psi(1,\vp)} \right) \right) \,\pi(\dd \vp).
\end{align*}
Together with the preceding computations, this yields
\begin{align*}
 \lefteqn{\cov[(\Delta^r G^{(1)}_0)^2,(\Delta^r G^{(1)}_{h})^2]}\\
&= \E \left[ (\Delta^r G^{(1)}_0)^2 \E[L_1^2] \int_{(h,h+r]} \E[ \bar{V}^{(1)}_{s-}| \cG^{(1)}_r ] \,\dd s \right] - \E\left[(\Delta^r G^{(1)}_0)^2\right]\E\left[(\Delta^r G^{(1)}_{h})^2)\right]\\
&= \E[L_1^2]  \E \left[ (\Delta^r G^{(1)}_0)^2 \int_{(h,h+r]} \int_{\Phi_L^{(1)}} \left(\ee^{(s-r)\Psi(1,\vp)} V_r^\vp + \E[V_0^\vp]\left(1 - \ee^{(s-r)\Psi(1,\vp)} \right) \right) \,\pi(\dd \vp) \,\dd s \right]\\
&\quad -\left(\E\left[(\Delta^r G^{(1)}_0)^2\right]\right)^2 \\
&= \E[L_1^2]  \E \left[ (\Delta^r G^{(1)}_0)^2 \int_{\Phi_L^{(1)}} \frac{1}{\Psi(1,\vp)} \left(\ee^{h\Psi(1,\vp)}-\ee^{(h-r)\Psi(1,\vp)}\right)(V_r^\vp - \E[V_0^\vp]) + r \E[V_0^\vp] \,\pi(\dd \vp)\right]\\
&\quad -\left(\E\left[(\Delta^r G^{(1)}_0)^2\right]\right)^2 \\
&= \E[L_1^2]  \int_{\Phi_L^{(1)}} \frac{1}{\Psi(1,\vp)} \left(\ee^{h\Psi(1,\vp)}-\ee^{(h-r)\Psi(1,\vp)}\right)\left(\E[(\Delta^r G^{(1)}_0)^2 V_r^\vp] - \E[(\Delta^r G^{(1)}_0)^2] \E[V_r^\vp]\right)\,\pi(\dd \vp)\\
&\quad +  \E[(\Delta^r G^{(1)}_0)^2] r \E[L_1^2]  \int_{\Phi_L^{(1)}}  \E[V_0^\vp] \,\pi(\dd \vp) - \left(\E\left[(\Delta^r G^{(1)}_0)^2\right]\right)^2\\
&= \E[L_1^2]  \int_{\Phi_L^{(1)}} \frac{1}{\Psi(1,\vp)} \left(\ee^{h\Psi(1,\vp)}-\ee^{(h-r)\Psi(1,\vp)}\right)\cov[(\Delta^r G^{(1)}_0)^2, V_r^\vp] \,\pi(\dd \vp).
\end{align*}
It remains to prove $\cov[(\Delta^r G^{(1)}_0)^2, V_r^\vp]\geq0$ with strict inequality if $\pi(\{\vp\})>0$ in order to
obtain the claimed positivity of the covariance of the squared increments. Again using integration by parts, we get
\[ (\Delta^r G^{(1)}_0)^2 = \left(\int_{(0,r]} \sqrt{\bar V^{(1)}_{s-}}\,\dd L^{\vp_1}_s\right)^2 = 2M_r + \int_{(0,r]} \bar V^{(1)}_{s-}\,\dd [L^{\vp_1},L^{\vp_1}]_s, \]
where
\[ M_r:=\int_{(0,r]} \sqrt{\bar V^{(1)}_{s-}} \left(\int_{(0,s)} \sqrt{\bar V^{(1)}_{u-}} \,\dd L^{\vp_1}_u\right)\,\dd L^{\vp_1}_s  \]
satisfies $\bbe[M_r]=0$ due to $\bbe[L_1]=0$ and
\begin{align}
\bbe[M_r V_r^\vp] &= \bbe\left[ \int_{(0,r]} M_s(\beta-\eta V^\vp_s)\,\dd s \right] + \bbe\left[ \int_{(0,r]} M_{s-} \vp V^\vp_{s-}\,\dd S^{\vp}_s\right] + \bbe\left[ \int_{(0,r]} V^\vp_{s-}\,\dd M_s \right]\nonumber\\ &\quad + \bbe\big[ [V^\vp,M]_r \big] \nonumber\\
&=\Psi(1,\vp)\int_{(0,r]} \bbe[M_s V^\vp_s]\,\dd s + \bbe\big[ [V^\vp,M]_r\big]. \label{price-supCOG1-aux}
\end{align}
Applying $\int_\bbr y^3\,\nu_L(\dd y)=0$ and the independence of $L^\vp$ and $L^{\vp_1}$, if $\vp\neq\vp_1$, we have
\begin{align}\label{eq-price-cases} \bbe\big[[V^{\vp}, M]_r\big] &= \vp\bbe\left[\int_{(0,r]} V^\vp_{s-} \sqrt{\bar V^{(1)}_{s-}} \left(\int_{(0,s)} \sqrt{\bar V^{(1)}_{u-}} \,\dd L^{\vp_1}_u\right)\,\dd [L^{\vp_1},S^\vp]_s\right]\\
&=\begin{cases} 0 &\text{if } \vp\neq\vp_1,\\ \vp\displaystyle\int_\bbr y^3\,\nu_L(\dd y) \displaystyle\int_{(0,r]} \bbe\left[V^\vp_{s-} \sqrt{\bar V^{(1)}_{s-}} \left(\displaystyle\int_{(0,s)} \sqrt{\bar V^{(1)}_{u-}}\,\dd L^{\vp_1}_u\right) \right]\,\dd s = 0 &\text{if } \vp=\vp_1. \end{cases} \nonumber
\end{align}
Therefore, \eqref{price-supCOG1-aux} together with the fact that $\bbe[M_0 V_0^\vp]=0$ implies that $\bbe[M_r V_r^\vp]=0$ for all $r\geq0$.
As a consequence, we have
\begin{align*} \cov[(\Delta^r G^{(1)}_0)^2,V^\vp_r] &= \cov\left[2M_r+\int_{(0,r]} \bar V^{(1)}_{s-}\,\dd [L^{\vp_1},L^{\vp_1}]_s, V^\vp_r\right]\\
&=\bbe\left[ V^\vp_r \int_{(0,r]} \bar V^{(1)}_{s-}\,\dd [L^{\vp_1},L^{\vp_1}]_s\right]-\bbe[V^\vp_1]\bbe\left[\int_{(0,r]} \bar V^{(1)}_{s-}\,\dd [L^{\vp_1},L^{\vp_1}]_s\right]\\
&=\bbe\left[ V^\vp_r \int_{(0,r]} \bar V^{(1)}_{s-}\,\dd [L^{\vp_1},L^{\vp_1}]_s\right]-r \bbe[L_1^2]\bbe[\bar V^{(1)}_0]\bbe[V_0^\vp],
\end{align*}
where an application of the integration by parts formula yields
\begin{align*}
f(r):=&~\bbe\left[ V^\vp_r \int_{(0,r]} \bar V^{(1)}_{s-}\,\dd [L^{\vp_1},L^{\vp_1}]_s\right]\\
=&~\bbe[L_1^2]\int_{(0,r]} \bbe[V_s^\vp\bar V^{(1)}_s]\,\dd s + \beta\int_{(0,r]} \bbe\left[\int_{(0,s]} \bar V^{(1)}_{u-}\,\dd [L^{\vp_1},L^{\vp_1}]_u\right]\,\dd s \\
&\quad\quad\quad+\Psi(1,\vp)\int_{(0,r]} \bbe\left[V_s^\vp \int_{(0,s]} \bar V^{(1)}_{u-}\,\dd [L^{\vp_1},L^{\vp_1}]_u\right]\,\dd s + \bbe\left[\int_{(0,r]} \bar V^{(1)}_{s-}\,\dd \big[ S^{\vp_1},V^\vp\big]_s\right]\\
=&~\bbe[L_1^2]\bbe[V_0^\vp \bar V_0^{(1)}]r + \beta\bbe[L_1^2]\bbe[\bar V^{(1)}_0]\frac{r^2}{2} + \Psi(1,\vp)\int_{(0,r]} f(s)\,\dd s\\ &\quad + \mathds{1}_{\{\vp=\vp_1\}}\vp \int_\bbr y^2 \,\nu_S(\dd y) \bbe[V_0^\vp \bar V_0^{(1)}]r,\\
f(0)=&~0.
\end{align*}
Solving this integral equation yields ($m_2:=\int_\bbr y^2\,\nu_S(\dd y)$)
\begin{align*} f(r)&=\frac{(\bbe[L_1^2]+\mathds{1}_{\{\vp=\vp_1\}}\vp m_2)\bbe[V_0^\vp \bar V^{(1)}_0]\Psi(1,\vp)(\ee^{\Psi(1,\vp)r}-1)}{\Psi(1,\vp)^2}\\
&\quad\quad\quad+\frac{\beta\bbe[L_1^2]\bbe[\bar V^{(1)}_0](-\Psi(1,\vp)r+\ee^{\Psi(1,\vp)r}-1)}{\Psi(1,\vp)^2}, \end{align*}
which by \eqref{mean-cog} yields the claimed positive correlation, since
\begin{align}
\lefteqn{\cov[(\Delta^r G^{(1)}_0)^2,V^\vp_r]=f(r)-\bbe[L_1^2]\bbe[V_0^\vp]\bbe[\bar V^{(1)}_0]r} \label{analog}\\
=&~\frac{(\bbe[L_1^2]+\mathds{1}_{\{\vp=\vp_1\}}\vp m_2)\bbe[V_0^\vp \bar V^{(1)}_0]\Psi(1,\vp)(\ee^{\Psi(1,\vp)r}-1) + \beta\bbe[L_1^2]\bbe[\bar V^{(1)}_0](\ee^{\Psi(1,\vp)r}-1)}{\Psi(1,\vp)^2} \nonumber\\
=&~\frac{\ee^{\Psi(1,\vp)r}-1}{\Psi(1,\vp)}\left(\bbe[L_1^2]\cov[V_0^\vp,\bar V_0^{(1)}] + \mathds{1}_{\{\vp=\vp_1\}}\vp \int_\bbr y^2\,\nu_S(\dd y) \bbe[V_0^\vp \bar V^{(1)}_0]\right)\geq0 \nonumber
\end{align}
with  $\cov[V_0^\vp,\bar V_0^{(1)}]=\pi(\{\vp\})\var[V^\vp_1]$.
\end{proof}

\begin{proof}[Proof of Theorem \ref{prop-price2}]
The proof works similarly to the proof 
of Theorem \ref{price-supCOG1} with the obvious changes,
when independence of the single COGARCH processes was used (e.g. \eqref{eq-price-cases}). Also replace $\bbg^{(1)}$ by $\bbg^{(2)}=(\calg_t^{(2)})_{t\in\bbr}$, the augmented natural filtration of $L$, and notice that $\cov[V_0^\vp,\bar V^{(2)}_0]=\int_{\Phi_L^{(2)}} \cov[V_0^\vp,V_0^{\tvp}]\,\pi(\dd\tvp)>0$
by Proposition~\ref{eq-crosscor2}.
\end{proof}

\begin{proof}[Proof of Theorem \ref{prop-price3}]
Analogously to the proof of Theorem \ref{price-supCOG1}, one can show that (a) and (b) hold and that for (c) we have
\beq\label{aux-price3}
\E \left[(\Delta^r G^{(3)}_0)^2(\Delta^r G^{(3)}_{h})^2  \right] = \E[L_1^2] \E \left[ (\Delta^r G^{(3)}_0)^2 \int_{(h,h+r]} \E[ \bar{V}^{(3)}_{s-}| \cG^{(3)}_r ] \,\dd s \right],\eeq
where from \eqref{supcogarch3-explicitstationary} and \cite[Eq. (4.5)]{KLM:2004} we have
\begin{align*}
\bbe[\bar V^{(3)}_{s-}| \cG^{(3)}_r] &= \ee^{-\eta(s-r)}\bar V^{(3)}_r + \beta \ee^{-\eta s}\int_{(r,s)}\ee^{\eta u}\dd u + \bbe\left[\left.\int_{(r,s)} \int_{\Phi_L^{(2)}} \ee^{-\eta(s-u)}\vp V^\vp_{u-}\,\La^S(\dd u,\dd\vp) \right| \cG^{(3)}_r\right]\\
&=\ee^{-\eta(s-r)}\bar V^{(3)}_r + \frac{\beta}{\eta} (1-\ee^{-\eta(s-r)}) + \bbe[S_1]\int_{(r,s]} \int_{\Phi_L^{(2)}} \ee^{-\eta(s-u)}\vp\bbe[V^\vp_{u-} | \cG^{(3)}_r] \,\pi(\dd\vp)\,\dd u\\
&=\ee^{-\eta(s-r)}\bar V^{(3)}_r + \frac{\beta}{\eta} (1-\ee^{-\eta(s-r)})\\
&\quad +\bbe[S_1]\int_{(r,s]}\int_{\Phi_L^{(2)}} \ee^{-\eta(s-u)}\vp\big((V^\vp_r-\bbe[V_0^\vp])\ee^{(u-r)\Psi(1,\vp)}+\bbe[V_0^\vp]\big)\,\pi(\dd\vp)\,\dd u.
\end{align*}
Applying \eqref{mean-cog} we obtain
\begin{align*}
\lefteqn{ \bbe[S_1]\int_{(r,s]}\int_{\Phi_L^{(2)}} \ee^{-\eta(s-u)}\vp\big((V^\vp_r-\bbe[V_0^\vp])\ee^{(u-r)\Psi(1,\vp)}+\bbe[V_0^\vp]\big)\,\pi(\dd\vp)\,\dd u}\\
=&~\bbe[S_1]\int_{\Phi_L^{(2)}} \left( \frac{\vp(V^\vp_r-\bbe[V_0^\vp])}{\vp \E[S_1]} \left(\ee^{\Psi(1,\vp)(s-r)}-\ee^{-\eta(s-r)} \right) + \frac{\vp \E[V_0^\vp]}{\eta} \left(1- \ee^{-\eta(s-r)}\right)\right) \,\pi(\dd\vp) \\
=&~\int_{\Phi_L^{(2)}} \ee^{\Psi(1,\vp)(s-r)}(V_r^\vp-\bbe[V_0^\vp])\,\pi(\dd\vp)- \ee^{-\eta(s-r)}\left(\int_{\Phi_L^{(2)}} V_r^\vp\,\pi(\dd\vp) - \E[\bar V^{(3)}_0] \right)\\
&\quad +  \left(1- \ee^{-\eta(s-r)}\right) \int_{\Phi_L^{(2)}} \frac{\bbe[S_1] \vp }{\eta}\frac{-\beta}{ \Psi(1,\vp)}  \,\pi(\dd\vp)\\
=&~\int_{\Phi_L^{(2)}} \ee^{\Psi(1,\vp)(s-r)}(V_r^\vp-\bbe[V_0^\vp])\,\pi(\dd\vp)- \ee^{-\eta(s-r)}\left(\int_{\Phi_L^{(2)}} V_r^\vp\,\pi(\dd\vp) - \E[\bar V^{(3)}_0] \right)\\
&\quad - \frac{\beta}{\eta}\left(1- \ee^{-\eta(s-r)}\right) \int_{\Phi_L^{(2)}} \left(1+ \frac{\eta}{ \Psi(1,\vp)}\right)  \,\pi(\dd\vp)\\
=&~\int_{\Phi_L^{(2)}} \ee^{\Psi(1,\vp)(s-r)}(V_r^\vp-\bbe[V_0^\vp])\,\pi(\dd\vp)- \ee^{-\eta(s-r)} \int_{\Phi_L^{(2)}} V_r^\vp\,\pi(\dd\vp)
- \frac{\beta}{\eta}\left(1- \ee^{-\eta(s-r)}\right)  + \E[\bar V^{(3)}_0]
\end{align*}
such that
\begin{align*}
\bbe[\bar V^{(3)}_{s-}| \cG^{(3)}_r] &=\ee^{-\eta(s-r)}\left(\bar V^{(3)}_r - \int_{\Phi_L^{(2)}} V_r^\vp\,\pi(\dd\vp)\right)  +\int_{\Phi_L^{(2)}} \ee^{\Psi(1,\vp)(s-r)}(V_r^\vp-\bbe[V_0^\vp])\,\pi(\dd\vp)
+ \E[\bar V^{(3)}_0].
\end{align*}
Inserting this into \eqref{aux-price3} yields 
\begin{align*}
&~\cov[(\Delta^r G_0^{(3)})^2,(\Delta^r G_h^{(3)})^2] \\
=&~\bbe[L_1^2]\bbe\left[(\Delta^r G_0^{(3)})^2 \int_{(h,h+r]} \E[ \bar{V}^{(3)}_{s-}| \cG^{(3)}_r ] \,\dd s \right] - \bbe[(\Delta^r G_0^{(3)})^2]^2\\
=&~\bbe[L^2_1]\bbe\left[(\Delta^r G_0^{(3)})^2 \left( \frac{\ee^{-\eta h} - \ee^{-\eta(h-r)}}{-\eta} \left(\bar V^{(3)}_r-\int_{\Phi_L^{(2)}} V_r^\vp\,\pi(\dd\vp)\right) \right)\right.\\
&\quad\quad\quad+\left.\int_{\Phi_L^{(2)}} \frac{\ee^{\Psi(1,\vp)h}-\ee^{\Psi(1,\vp)(h-r)}}{\Psi(1,\vp)}(V_r^\vp-\bbe[V^\vp_0])\,\pi(\dd\vp) \right]\\
=&~\bbe[L^2_1]\left[\frac{\ee^{-\eta h}-\ee^{-\eta(h-r)}}{-\eta}\cov[(\Delta^r G_0^{(3)})^2,\bar V^{(3)}_r]  \right.\\
&\quad\quad\quad+\left.\int_{\Phi_L^{(2)}} \left(\frac{\ee^{\Psi(1,\vp)h}-\ee^{\Psi(1,\vp)(h-r)}}{\Psi(1,\vp)}-\frac{\ee^{-\eta h}-\ee^{-\eta(h-r)}}{-\eta}\right)\cov[(\Delta^r G_0^{(3)})^2,V^\vp_r]\,\pi(\dd\vp)\right].
\end{align*}
Since $\Psi(1,\vp)>-\eta$ and the function $x\mapsto (\ee^{hx}-\ee^{(h-r)x})/x$ is increasing in $x$ for $x<0$, it remains to prove $\cov[(\Delta^r G_0^{(3)})^2,\bar V^{(3)}_r]>0$ and $\cov[(\Delta^r G_0^{(3)})^2,V^\vp_r]>0$. For the latter one, proceed as in the proof of Theorem \ref{price-supCOG1} and note that $\cov[V_0^\vp,\bar V_0^{(3)}]>0$. Indeed, using integration by parts, 
\begin{align*}
V^\vp_r\bar V^{(3)}_r&=V^\vp_0\bar V^{(3)}_0 + \int_{(0,r]} \bar V^{(3)}_{s-}\,\dd V^\vp_s + \int_{(0,r]} V^\vp_{s-}\,\dd \bar V^{(3)}_s + [V^\vp,\bar V^{(3)}]_r\\
&=V^\vp_0\bar V^{(3)}_0 + \int_{(0,r]} \bar V^{(3)}_{s}(\beta-\eta V^\vp_s)\,\dd s + \int_{(0,r]} \bar V^{(3)}_{s-} \vp V^\vp_{s-}\,\dd S_s+ \int_{(0,r]} V^\vp_{s-}(\beta-\eta \bar V^{(3)}_s)\,\dd s \\
&\quad\quad\quad + \int_{(0,r]} \int_{\Phi_L^{(2)}} V^\vp_{s-}\tvp V^\tvp_{s-}\,\La^S(\dd s,\dd\tvp) + \vp\int_{(0,r]} \int_{\Phi_L^{(2)}} \int_{\bbr_+} V^\vp_{s-}\tvp V^\tvp_{s-} y^2\,\mu^{\La^S}(\dd s,\dd\tvp,\dd y),
\end{align*}
with $[V^\vp,\bar V^{(3)}]_r$ as given in Lemma \ref{quadvar}. Taking expectations, differentiating w.r.t. $r$ and using the stationarity of $V^\vp\bar V^{(3)}$, which is a consequence of Lemma \ref{lem1}, we find that ($m_1:=\int_{\bbr_+} y\,\nu_S(\dd y)$ and $m_2:=\int_{\bbr_+} y^2\,\nu_S(\dd y)$)
\[ \beta(\bbe[\bar V_0^{(3)}]+\bbe[V_0^\vp]) + (\vp m_1-2\eta)\bbe[V_0^\vp \bar V^{(3)}_0] + (m_1+\vp m_2)\int_{\Phi_L^{(2)}} \tvp\bbe[V_0^\vp V_0^\tvp]\,\pi(\dd\tvp)=0, \]
which implies that
\begin{align*}
\lefteqn{ \cov[\bar V^{(3)}_0,V^\vp_0]}\\
=&~\frac{\beta(\bbe[\bar V_0^{(3)}]+\bbe[V_0^\vp]) + (m_1+\vp m_2)\int_{\Phi_L^{(2)}} \tvp\bbe[V_0^\vp V_0^\tvp]\,\pi(\dd\tvp)-(\eta-\Psi(1,\vp))\bbe[\bar V^{(3)}_0]\bbe[V_0^\vp]}{\eta-\Psi(1,\vp)}.
\end{align*}
To show the positivity of this term, we only have to consider the numerator, which by \eqref{mean-cog}, \eqref{mean-supcog3} and \eqref{eq-h-explicit} can be simplified to
\begin{align*}
&~\int_{\Phi_L^{(2)}} (m_1+\vp m_2)\tvp \cov[V_0^\vp,V_0^\tvp]\,\pi(\dd\tvp) + \beta(\bbe[\bar V_0^{(3)}]+\bbe[V_0^\vp]) + h(\vp,\tvp)\bbe[V_0^\vp]\bbe[\bar V_0^{(3)}]\\
=&~\int_{\Phi_L^{(2)}} (m_1+\vp m_2)\tvp \cov[V_0^\vp,V_0^\tvp]\,\pi(\dd\tvp) + \beta^2 m_2\int_{\Phi_L^{(2)}} \frac{\vp\tvp}{\Psi(1,\vp)\Psi(1,\tvp)}\,\pi(\dd\tvp) > 0.
\end{align*}
Finally, 
using the same methods as in the proof of Theorem \ref{price-supCOG1}, one can derive the following analogue of Eq.~\eqref{analog}:
\[ \cov[(\Delta^r G^{(3)}_0)^2,\bar V^{(3)}_0] = g(r)-\bbe[L^2_1]\bbe[\bar V_0]^2 r,\]
where
\begin{align*} g(r)&=\ee^{-\eta r} \left(\int_{(0,r]} \ee^{\eta s}\left(a+bs+\int_{\Phi_L^{(2)}} m_1\vp f(\vp,s)\,\pi(\dd\vp)\right)\,\dd s\right),\quad r\geq0,\\
 a &= \bbe[L_1^2]\bbe[(\bar V^{(3)}_0)^2] + \int_{\bbr_+} y^2\,\nu_S(\dd y) \int_{\Phi_L^{(2)}} \vp\bbe[V_0^\vp \bar V_0]\,\pi(\dd\vp),\\
 b &= \beta\bbe[L_1^2]\bbe[\bar V^{(3)}_0]\quad\text{and} \quad f(\vp,r) = \bbe\left[V_r^\vp \int_{(0,r]} \bar V^{(3)}_{u-}\,\dd [L,L]_u\right].
\end{align*}
The positivity now follows from
\[ \cov[(\Delta^r G^{(3)}_0)^2,\bar V^{(3)}_0] \geq \ee^{\eta r}\int_{(0,r]} \ee^{-\eta s}\,\dd s \bbe[L_1^2]\bbe[(\bar V^{(3)}_0)^2]-\bbe[L^2_1]\bbe[\bar V^{(3)}_0]^2 r \]
and the fact that $\ee^{\eta r}\int_{(0,r]} \ee^{-\eta s}\,\dd s = (\ee^{\eta r}-1)/\eta > r$ for all $r>0$.
\end{proof}

\section*{Acknowledgements}
The authors take pleasure in thanking Jean Jacod for inspiring and clarifying discussions. The second author acknowledges support from the graduate program TopMath at Technische Universit\"at M\"unchen.

\bibliography{bib-supCOGARCH}
\bibliographystyle{plain}

\end{document}